\newtheorem{lemma}{Lemma}[section]
\newtheorem{theorem}{Theorem}[section]
\newtheorem{korollar}{Corollary}[section]
\newtheorem{anmerk}{Remark}[section]
\newcommand*{\N}{\mathbb{N}}
\newcommand*{\R}{\mathbb{R}} 
\begin{document}

\title{On Hahn polynomial expansion of a continuous function of bounded variation}
\author{Ren\'e Goertz and Philipp \"Offner}
\date{\today}
\maketitle

\begin{abstract}
We consider the well-known method of least squares on an equidistant grid with $N+1$ nodes on the interval $[-1,1]$. We investigate the following problem: 
For which ratio $N/n$ and which functions, do we have pointwise convergence of the least square operator ${LS}_n^N:\mathcal{C}\left[-1,1\right]\rightarrow\mathcal{P}_n$? 
To solve this problem we investigate the relation between the Jacobi polynomials $P_k^{\alpha,\beta}$ and the Hahn polynomials $Q_k\left(\cdot;\alpha,\beta,N\right)$. Thereby we describe the least square operator ${LS}_n^N$ by the expansion of a function by Hahn polynomials. In particular we present the following result:
The series expansion $\sum_{k=0}^n{\hat{f} Q_k}$ of a function $f$ by Hahn polynomials $Q_k$ converges pointwise, if the series expansion $\sum_{k=0}^n{\hat{f} P_k}$ of the function $f$ by Jacobi polynomials $P_k$ converges pointwise and if ${n^4}/N\rightarrow 0$ for $n,N\rightarrow\infty$. Furthermore we obtain the following result:
Let $f\in\left\{g\in\mathcal{C}^1\left[-1,1\right]:g^\prime\in\mathcal{BV}\left[-1,1\right]\right\}$ and let $(N_n)_{n}$ be a sequence of natural numbers with ${n^4}/{N_n}\rightarrow 0$. Then the least square method ${LS}_n^{N_n}[f]$ converges for each $x\in[-1,1]$.
\end{abstract}

\section{Introduction and statement of the main results}
\label{introduction}

It is over $200$ years ago since Legendre, Gau\ss\, and others started working with the method of least squares (cf., e.g., \cite{merriman1877history}). 
Since then, the method is used in many areas of mathematics and is nowaday a basic tool for every applied mathematician.
Our focus in this paper is the pure approximation property of the method on a grid with $N+1$ nodes on the interval $[-1,1]$.
\par
The method of least squares is defined as follows (cf., e.g., \cite[p. 217]{gautschi2004orthogonal}): Let $U$ be a subspace of $\mathcal{C}\left[a,b\right]$ with
$\mbox{dim}\, U=n+1$. Let $x_\mu\in\left[a,b\right]$ be distinct nodes for $\mu=0,\dots,N$ and $N\geq n$. Let $w:\left[a,b\right]\rightarrow\mathbb{R}$ be a non-negative weight-function. The least square operator ${LS}_n^N:\mathcal{C}\left[a,b\right]\rightarrow U$ is defined by
\begin{equation}
\sum_{\mu=0}^N{\left({LS}_n^N[f]\left(x_\mu\right)-f\left(x_\mu\right)\right)^2 w\left(x_\mu\right)}=\min_{p\in U}{\sum_{\mu=0}^N{\left(p\left(x_\mu\right)-f\left(x_\mu\right)\right)^2 w\left(x_\mu\right)}}.
\end{equation}
Here we investigate the following problem: For which
\begin{itemize}
  \item functions $f\in K\subset\mathcal{C}\left[a,b\right]$,
	\item subspace $U\subset\mathcal{C}\left[a,b\right]$ with $\mbox{dim}\, U=n+1$,
	\item grid $\left\{x_0,\dots,x_N\right\}$ with $x_\mu\in\left[a,b\right]$,
	\item ratio $N/n$
\end{itemize}
do we have convergence of the sequence $\left({LS}_n^N[f]\right)$ for each $x\in\left[a,b\right]$?\\
In the following we consider the standard case
\begin{itemize}
  \item $\left[a,b\right]=\left[-1,1\right]$,
	\item $U=\mathcal{P}_n$ is the space of polynomials of degree $n$,
	\item $\left\{x_0,\dots,x_N\right\}$ is an equidistant grid with $N+1$ nodes on the interval $[-1,1]$, i. e. $x_\mu=-1+2\mu/N$ for $\mu=0,\dots,N$.
\end{itemize}
To investigate the above problem we describe the least square operator by the expansion of a function by Hahn polynomials. The Hahn polynomials $Q_k\equiv Q_k\left(\cdot;\alpha,\beta,N\right)$ are classical discrete orthogonal polynomials on the interval $I=\left[0,N\right]$ of degree $k$. They are orthogonal on $I$ with respect to the inner product
\begin{equation}
\left<f,g\right>_\omega:=\sum_{i=0}^N{f(i)g(i)\omega(i)},
\end{equation}
where $\omega$ is the weight-function given by
\begin{equation}
\omega(x):=\left(\begin{matrix}\alpha+x\\x\end{matrix}\right)\left(\begin{matrix}\beta+N-x\\N-x\end{matrix}\right).
\end{equation}
They are normalized by
\begin{equation}\label{orthoghahn1a}
\left<Q_k(\cdot;\alpha,\beta,N),Q_k(\cdot;\alpha,\beta,N)\right>_\omega=\frac{(-1)^k(k+\alpha+\beta+1)_{N+1}(\beta+1)_k k!}{(2k+\alpha+\beta+1)(\alpha+1)_k(-N)_k N!}
\end{equation}
(cf., e.g., \cite[p. 204]{koekoek2010hypergeometric}).\\
It is well-known that the least square operator ${LS}_n^N$ can be represented by use of Hahn polynomials (cf., e.g., \cite[p. 218-232]{wernerfunktionalanalysis}):
\begin{equation}
{LS}_n^N[f](x)=\sum_{k=0}^n{\frac{\left<f,Q_k\right>_\omega}{\left<Q_k,Q_k\right>_\omega}Q_k\left(\frac{N}{2}(1+x)\right)},
\end{equation}
where $f\in\mathcal{C}\left[-1,1\right]$.
\par
The following relation between Hahn polynomials and Jacobi polynomials $P_k\equiv P_k^{\alpha,\beta}$ is well-known.
\begin{equation}\label{s2}
\lim_{N\to\infty}{(-1)^k\binom{k+\alpha}{k}Q_k\left(\frac{N}{2}(1+x);\alpha,\beta,N\right)}=P_k^{\beta,\alpha}(x),
\end{equation}
for each $x\in[-1,1]$ (cf., e.g., \cite[p. 45]{nikiforov1991classical}). The Jacobi polynomials $P_k^{\alpha,\beta}$ are orthogonal on the interval $I=\left[-1,1\right]$ with respect to the inner product
\begin{equation}
(f,g)_\varrho:=\int_{-1}^1{f(x)g(x)\varrho(x)\mathrm{d}x},
\end{equation}
where
\begin{equation}
\varrho(x):=(1-x)^\alpha(1+x)^\beta
\end{equation}
is the weight-function. They are normalized by 
\begin{equation}\label{orthogjacobi1a}
\left(P_k^{\alpha,\beta},P_k^{\alpha,\beta}\right)_\varrho=\frac{2^{\alpha+\beta+1}\Gamma(k+\alpha+1)\Gamma(k+\beta+1)}{(2k+\alpha+\beta+1)k!\Gamma(k+\alpha+\beta+1)}
\end{equation}
(cf., e.g., \cite[p. 217]{koekoek2010hypergeometric}). By equation (\ref{s2}) the Hahn polynomials can be seen as the discrete analogue of the Jacobi polynomials.
\par
For several classes of functions the convergence of the corresponding expansions by use of Jacobi polynomials have been proved in the last decades. The following first main Theorem \ref{s16} shows the connection of these classical results with the problem stated above. We estimate the pointwise error of a function $u$ and their truncated Hahn series expansion with the pointwise error of $u$ with their truncated Jacobi series expansion.

\begin{theorem}\label{s16}
Let $\alpha\geq0$ and let for $N\in\N$
\begin{align}
n(\alpha,N):=\frac{1}{2}-\alpha+\frac{1}{2}\sqrt{(2\alpha+1)(2\alpha+2N+1)}.
\end{align}
For each $u\in\mathcal{C}\left[-1,1\right]\cap\mathcal{BV}\left[-1,1\right]$ holds
\begin{align}
\begin{aligned}
&\left\lvert u(x)-\sum_{k=0}^n{\frac{\left<u,Q_k\right>_\omega}{\left<Q_k,Q_k\right>_\omega}Q_k\left(\frac{N}{2}(1+x)\right)}\right\rvert\\
\leq&\left\lvert u(x)-\sum_{k=0}^n{\frac{\left(u,P_k\right)_\varrho}{\left(P_k,P_k\right)_\varrho}P_k(x)}\right\rvert+\mathcal{O}\left(\frac{n^{3+\alpha+\max{\{1,\alpha\}}}}{N}\right),
\end{aligned}
\end{align}
with $n\leq n(\alpha,N)$, for each $x\in[-1,1]$.
\end{theorem}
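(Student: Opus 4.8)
The plan is to insert the $n$th partial sum of the Jacobi expansion of $u$ between $u(x)$ and the $n$th partial sum of the Hahn expansion, and then to estimate the intermediate difference term by term. By the triangle inequality it suffices to bound
\[
\left\lvert\sum_{k=0}^{n}\frac{\left<u,Q_k\right>_\omega}{\left<Q_k,Q_k\right>_\omega}\,Q_k\!\left(\tfrac N2(1+x)\right)-\sum_{k=0}^{n}\frac{(u,P_k)_\varrho}{(P_k,P_k)_\varrho}\,P_k(x)\right\rvert
\]
by $\mathcal O(n^{3+\alpha+\max\{1,\alpha\}}/N)$, uniformly in $x\in[-1,1]$ and with an implied constant depending on $u$ only through $\lVert u\rVert_\infty+V(u)$. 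The first step is a renormalisation: with $\widetilde Q_k(x):=(-1)^k\binom{k+\alpha}{k}Q_k(\tfrac N2(1+x))$ one has $\widetilde Q_k\to P_k$ pointwise by (\ref{s2}) (the parameters of $Q_k$ and $P_k$ being paired as there), and since the factor $(-1)^k\binom{k+\alpha}{k}$ cancels in each Fourier quotient,
\[
\sum_{k=0}^{n}\frac{\left<u,Q_k\right>_\omega}{\left<Q_k,Q_k\right>_\omega}\,Q_k\!\left(\tfrac N2(1+x)\right)=\sum_{k=0}^{n}\frac{[u,\widetilde Q_k]}{[\widetilde Q_k,\widetilde Q_k]}\,\widetilde Q_k(x),\qquad [f,g]:=\sum_{\mu=0}^{N}f(x_\mu)g(x_\mu)\,\omega(\mu).
\]
Thus both partial sums are orthogonal expansions into polynomials that converge to each other, one with respect to the grid inner product $[\,\cdot\,,\,\cdot\,]$, the other with respect to $(\,\cdot\,,\,\cdot\,)_\varrho$. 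Abbreviating $\widehat u_k^{\,Q}:=[u,\widetilde Q_k]/[\widetilde Q_k,\widetilde Q_k]$ and $\widehat u_k^{\,P}:=(u,P_k)_\varrho/(P_k,P_k)_\varrho$, one splits each summand as $\widehat u_k^{\,Q}\widetilde Q_k(x)-\widehat u_k^{\,P}P_k(x)=(\widehat u_k^{\,Q}-\widehat u_k^{\,P})\widetilde Q_k(x)+\widehat u_k^{\,P}(\widetilde Q_k(x)-P_k(x))$ and estimates the two resulting sums over $k$ separately.

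The estimate rests on four quantitative ingredients, each carrying an explicit power of $k$. First, a rate for (\ref{s2}): writing out the terminating hypergeometric representations of $Q_k(\tfrac N2(1+x))$ and of $P_k$ and comparing them coefficient by coefficient, the Pochhammer ratios $(-\tfrac N2(1+x))_j/(-N)_j$ differ from $(\tfrac{1+x}{2})^j$ by a factor $1+\mathcal O(j^2/N)$; resumming so as to preserve the internal cancellation of the Jacobi polynomial — which amounts to re-expressing the leading term through $P_k'$ and $P_k''$ via the Jacobi differential equation — yields $\lVert\widetilde Q_k-P_k\rVert_{\infty,[-1,1]}=\mathcal O(k^{2+\max\{1,\alpha\}}/N)$ for $k\le n(\alpha,N)$; here $n(\alpha,N)$, which is of order $\sqrt N$, is precisely what keeps the accumulated relative correction $\prod_{j<k}(1+\mathcal O(j/N))$ bounded. (Alternatively one may compare the three–term recurrences of $\widetilde Q_k$ and $P_k$, whose coefficients differ by $\mathcal O(1/N)$, and propagate the perturbation.) Second, the sizes of the classical quantities: $\lVert P_k\rVert_\infty,\lVert\widetilde Q_k\rVert_\infty=\mathcal O(k^{\alpha})$ and $\lVert P_k'\rVert_\infty=\mathcal O(k^{2+\alpha})$ by the standard Jacobi asymptotics, $(P_k,P_k)_\varrho\asymp k^{-1}$ by (\ref{orthogjacobi1a}), and — using (\ref{orthoghahn1a}) together with $[\widetilde Q_k,\widetilde Q_k]=\binom{k+\alpha}{k}^2\left<Q_k,Q_k\right>_\omega$ — $[\widetilde Q_k,\widetilde Q_k]=\lambda_k(P_k,P_k)_\varrho$ with $\lambda_k=\lambda_0(1+\mathcal O(k^2/N))$ and a $k$–independent positive $\lambda_0$. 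Third, a quadrature estimate: $\tfrac1{\lambda_0}[\,\cdot\,,\,\cdot\,]$ approximates $(\,\cdot\,,\,\cdot\,)_\varrho$ on polynomials (the weights $\omega(\mu)$ being, up to the normalisation $\lambda_0$, a discretisation of $\varrho$ on the grid), so that $\lvert\tfrac1{\lambda_0}[u,\widetilde Q_k]-(u,\widetilde Q_k)_\varrho\rvert=\mathcal O(V(u\widetilde Q_k\varrho)/N)$ by summation by parts, the variation being controlled through the second ingredient. Fourth, since $u$ is of bounded variation, one integration by parts (respectively summation by parts) — using that the antiderivative of $P_k\varrho$ vanishes at both endpoints — bounds $(u,P_k)_\varrho$, $[u,\widetilde Q_k]$ and hence $\widehat u_k^{\,P}$, $\widehat u_k^{\,Q}$ by $\lVert u\rVert_\infty+V(u)$ times a mild power of $k$, which keeps the coefficients from inflating the estimate.

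Assembling the ingredients is then bookkeeping. For the ratio difference one writes
\[
\widehat u_k^{\,Q}-\widehat u_k^{\,P}=\frac{1}{(P_k,P_k)_\varrho}\!\left[\Bigl(\tfrac1{\lambda_k}-\tfrac1{\lambda_0}\Bigr)[u,\widetilde Q_k]+\Bigl(\tfrac1{\lambda_0}[u,\widetilde Q_k]-(u,\widetilde Q_k)_\varrho\Bigr)+(u,\widetilde Q_k-P_k)_\varrho\right],
\]
estimating the three brackets by the second and fourth ingredients, by the third (with the second and fourth), and by the first (with an integration by parts that exploits $\widetilde Q_k(\pm1)=P_k(\pm1)$), respectively; after multiplication by $(P_k,P_k)_\varrho^{-1}\asymp k$ and by $\lVert\widetilde Q_k\rVert_\infty=\mathcal O(k^{\alpha})$ this bounds the first sum, while the second sum $\sum_k\widehat u_k^{\,P}(\widetilde Q_k-P_k)$ is handled directly by the first and fourth ingredients. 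Tracking the powers of $k$ through these steps, each term of the combined sum is $\mathcal O(k^{2+\alpha+\max\{1,\alpha\}}/N)$, the factor $k^{\alpha}$ coming from the sup-norms of $P_k$, $\widetilde Q_k$ and the reciprocal coefficient sizes, and the factor $k^{2+\max\{1,\alpha\}}$ from the rate in the first ingredient; summing over $k=0,\dots,n$ with $\sum_{k\le n}k^m\asymp n^{m+1}$ yields the claimed $\mathcal O(n^{3+\alpha+\max\{1,\alpha\}}/N)$, valid in the regime $n\le n(\alpha,N)$ where all the relative corrections stay bounded.

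The hard part is the first ingredient: turning the pointwise limit (\ref{s2}) into a bound on $\lVert\widetilde Q_k-P_k\rVert_\infty$ that is simultaneously uniform in $x\in[-1,1]$ — including the endpoints $x=\pm1$, where $\varrho$ degenerates, where $P_k$ and $\widetilde Q_k$ attain their growing maxima, and where the grid weights $\omega(\mu)$ cease to model $\varrho$ well — and carries the correct power of $k$. A naive termwise comparison of the two hypergeometric series is not uniform there; what saves the estimate is that $\widetilde Q_k-P_k$ vanishes at $x=\pm1$ — indeed $\widetilde Q_k(\pm1)=P_k(\pm1)$ — so that the discrepancy can be controlled through the Jacobi differential equation, and that the restriction $n\le n(\alpha,N)$ prevents the $\mathcal O(1/N)$ relative errors from compounding. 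The remaining difficulty is computational: keeping the bookkeeping of $k$–powers (through the Jacobi norm asymptotics, the normalisation ratios $\lambda_k/\lambda_0$, Markov's inequality for $V(\widetilde Q_k\varrho)$, and the bounded–variation integrations by parts) tight enough to land on $n^{3+\alpha+\max\{1,\alpha\}}/N$ rather than something larger.
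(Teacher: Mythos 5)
Your strategy is in essence the paper's own: the paper proves Theorem \ref{s16} by combining Theorem \ref{s10} ($\alpha=0$) and Theorem \ref{s15} ($\alpha>0$), each of which is the triangle inequality plus a termwise bound on $\bigl\lvert\frac{(u,P_k)_\varrho}{(P_k,P_k)_\varrho}P_k(x)-\frac{\left<u,Q_k\right>_\omega}{\left<Q_k,Q_k\right>_\omega}Q_k\bigl(\frac N2(1+x)\bigr)\bigr\rvert$ (Lemmas \ref{s9} and \ref{s14}), and those termwise bounds rest on exactly your four ingredients: a quantitative rate for the renormalised limit (Lemma \ref{s3}, via the recurrence comparison of Remark \ref{rekurhahn2}), the exact and asymptotic norm/weight relations (Lemmas \ref{s4}, \ref{s5}, \ref{s11}), the trapezoidal-rule error in terms of total variation (Lemmas \ref{trapezformel} and \ref{bvprod}), and the variation and sup-norm bounds on $P_k\varrho$ (Lemma \ref{totvarjacobi2} through Corollary \ref{totvarjacobi5}). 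Your splitting of each term into a coefficient difference times $\widetilde Q_k$ plus a coefficient times $\widetilde Q_k-P_k$ is only a minor variant of the paper's splitting. The genuine problem is at the quantitative heart, which you yourself flag as the hard part: the rate $\lVert\widetilde Q_k-P_k\rVert_\infty=\mathcal O\bigl(k^{2+\max\{1,\alpha\}}/N\bigr)$ is asserted, not proved (two possible methods are gestured at, the recurrence route being the paper's), and it is weaker by a factor of $k$ than what the paper actually establishes and uses, namely $\mathcal O\bigl(k^{1+\max\{1,\alpha\}}/N\bigr)$ in Lemma \ref{s3}.

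With your stated rate the bookkeeping does not close as described. From sup-norms alone, $(u,\widetilde Q_k-P_k)_\varrho=\mathcal O\bigl(k^{2+\max\{1,\alpha\}}/N\bigr)$; dividing by $(P_k,P_k)_\varrho\asymp k^{-1}$ and multiplying by $\lVert\widetilde Q_k\rVert_\infty=\mathcal O(k^{\alpha})$ gives $\mathcal O\bigl(k^{3+\alpha+\max\{1,\alpha\}}/N\bigr)$ per term, hence $\mathcal O\bigl(n^{4+\alpha+\max\{1,\alpha\}}/N\bigr)$ after summation --- one power of $n$ more than claimed. The rescue you invoke, an integration by parts ``exploiting $\widetilde Q_k(\pm1)=P_k(\pm1)$'', does not obviously supply the missing $1/k$: what one would need is that the antiderivative of $(\widetilde Q_k-P_k)\varrho$ vanishes at $x=1$, i.e.\ $(\widetilde Q_k-P_k,1)_\varrho=0$, but $\widetilde Q_k$ is orthogonal to constants only with respect to the discrete inner product, not with respect to $(\cdot,\cdot)_\varrho$, and equality of the endpoint \emph{values} of the two polynomials yields no gain from sup-norm information alone. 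So either you must prove the sharper rate of Lemma \ref{s3} (in the paper this comes from rescaling the Hahn recurrence as in Remark \ref{rekurhahn2}, comparing its coefficients with the Jacobi recurrence, and propagating the perturbations for $n\le n(\alpha,N)$, the range in which Lemma \ref{s1} keeps $\lvert Q_n\rvert\le1$), after which the crude estimate suffices; or you must substantiate an actual cancellation in $(u,\widetilde Q_k-P_k)_\varrho$. As it stands, the decisive estimate is assumed rather than proved, and the exponent you assume is not compatible with the bound you claim to derive from it.
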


\begin{proof}
This result follows directly from Theorem \ref{s10} and Theorem \ref{s15} from section \ref{preliminaries}.
\end{proof}

Therefore with respect to the above problem we obtain directly from Theorem \ref{s16} our second main result:
\begin{theorem}\label{s17}
Let $\alpha\geq0$ and let for $N\in\N$
\begin{align}
n(\alpha,N):=\frac{1}{2}-\alpha+\frac{1}{2}\sqrt{(2\alpha+1)(2\alpha+2N+1)}.
\end{align}
For each $u\in\mathcal{C}\left[-1,1\right]\cap\mathcal{BV}\left[-1,1\right]$ and any sequence $(N_n)_{n\in\mathbb{N}}$ with $N_n^{-1}n^{3+\alpha+\max{\{1,\alpha\}}}\rightarrow0$ we have convergence of the least square method
\begin{equation}
\sum_{k=0}^n{\frac{\left<u,Q_k\right>_\omega}{\left<Q_k,Q_k\right>_\omega}Q_k\left(\frac{N}{2}(1+x)\right)}\rightarrow u(x),~n\rightarrow\infty
\end{equation}
for each $x\in[-1,1]$, for which the series of Jacobi polynomials converges to $u(x)$, i. e.
\begin{align}
\sum_{k=0}^n{\frac{\left(u,P_k\right)_\varrho}{\left(P_k,P_k\right)_\varrho}P_k(x)}\rightarrow u(x),~n\rightarrow\infty
\end{align}
for each $x\in[-1,1]$.
\end{theorem}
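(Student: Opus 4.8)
The plan is to obtain Theorem~\ref{s17} directly from the pointwise estimate of Theorem~\ref{s16}: the whole argument is a limit passage in that inequality, and the only genuine point to verify is that the admissibility constraint $n\le n(\alpha,N_n)$ holds for all sufficiently large $n$ under the stated growth hypothesis.

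First I would record the asymptotics of the threshold. Since
\[
n(\alpha,N)=\tfrac12-\alpha+\tfrac12\sqrt{(2\alpha+1)(2\alpha+2N+1)}=\sqrt{(\alpha+\tfrac12)N}\,\bigl(1+o(1)\bigr)\qquad(N\to\infty),
\]
there is a constant $c=c(\alpha)>0$ with $n(\alpha,N)\ge c\sqrt N$ for all sufficiently large $N$. The hypothesis $N_n^{-1}n^{3+\alpha+\max\{1,\alpha\}}\to0$ forces $N_n\to\infty$, and for $n$ large it gives $N_n> n^{3+\alpha+\max\{1,\alpha\}}\ge n^{4}$ (because $3+\alpha+\max\{1,\alpha\}\ge 4$ for $\alpha\ge0$). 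Hence $n(\alpha,N_n)\ge c\sqrt{N_n}\ge c\,n^{2}\ge n$ once $n\ge 1/c$, so the admissibility window $\{n:n\le n(\alpha,N_n)\}$ is cofinite and Theorem~\ref{s16} may be applied with $N=N_n$ for all large $n$.

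Then I would fix a point $x\in[-1,1]$ at which the Jacobi expansion converges, that is
\[
\Bigl\lvert u(x)-\sum_{k=0}^n\tfrac{(u,P_k)_\varrho}{(P_k,P_k)_\varrho}P_k(x)\Bigr\rvert\xrightarrow[n\to\infty]{}0,
\]
and insert $N=N_n$ into the inequality of Theorem~\ref{s16}, valid for all large $n$ by the previous step:
\[
\Bigl\lvert u(x)-\sum_{k=0}^n\tfrac{\langle u,Q_k\rangle_\omega}{\langle Q_k,Q_k\rangle_\omega}Q_k\bigl(\tfrac{N_n}{2}(1+x)\bigr)\Bigr\rvert\le \Bigl\lvert u(x)-\sum_{k=0}^n\tfrac{(u,P_k)_\varrho}{(P_k,P_k)_\varrho}P_k(x)\Bigr\rvert+\mathcal{O}\!\left(\frac{n^{3+\alpha+\max\{1,\alpha\}}}{N_n}\right).
\]
On the right-hand side the first summand tends to $0$ by the choice of $x$, and the second tends to $0$ by the growth assumption on $(N_n)$; hence the left-hand side tends to $0$, which is precisely the claimed convergence of the least square method at $x$. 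Since $x$ was an arbitrary point of Jacobi convergence, the theorem follows.

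I do not expect a serious obstacle here. The one point deserving attention is that the implied constant in the $\mathcal{O}$-term of Theorem~\ref{s16} must be independent of $N$ (it may depend on $x$ and $\alpha$), so that for each fixed $x$ the right-hand side is genuinely a null sequence in $n$ — this is how Theorem~\ref{s16} is stated. The second point, the cofiniteness of the admissibility window, has been dealt with above. Everything else is a routine limit argument.
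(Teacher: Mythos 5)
Your argument is correct and is essentially the paper's: the paper obtains Theorem~\ref{s17} directly from the pointwise estimate of Theorem~\ref{s16} by inserting $N=N_n$ and letting $n\to\infty$, exactly as you do. Your additional check that $n\le n(\alpha,N_n)$ holds for all sufficiently large $n$ is a worthwhile explicit verification of a point the paper leaves implicit, but it does not change the route.
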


Therefore the series expansion $\sum_{k=0}^n{\hat{u} Q_k}$ of a function $u$ by Hahn polynomials $Q_k$ converges pointwise, if the series expansion $\sum_{k=0}^n{\hat{u} P_k}$ of the function $u$ by Jacobi polynomials $P_k$ converges pointwise and if $n^{3+\alpha+\max{\{1,\alpha\}}}/N\rightarrow0$ for $n,N\rightarrow\infty$.
\par
In the special case $\alpha=0$ the Jacobi polynomials reduce to the Legendre polynomials. Using Jackson's Theorem:

\begin{lemma}[cf. \cite{jackson1930theory}]
Let $I=[-1,1]$ be a compact interval. For each function $f\in\mathcal{C}^1\left[-1,1\right]$, with $f^\prime\in\mathcal{BV}\left[-1,1\right]$, the series of the Legendre polynomials converges uniformly to $f$ in $I$.
\end{lemma}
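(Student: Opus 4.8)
The plan is to deduce the uniform convergence of the Legendre series from a best--approximation estimate by controlling the Lebesgue constant of the truncated Legendre projection. Throughout write $P_k=P_k^{0,0}$ for the Legendre polynomials and let $S_n f:=\sum_{k=0}^n\frac{(f,P_k)_\varrho}{(P_k,P_k)_\varrho}P_k$ be the $n$-th partial sum of the Legendre expansion; $S_n$ is a linear projection of $\mathcal C[-1,1]$ onto $\mathcal P_n$, so that $S_n p=p$ for every $p\in\mathcal P_n$. Let $\Lambda_n:=\sup\{\|S_n g\|_\infty:g\in\mathcal C[-1,1],\ \|g\|_\infty\le1\}$ denote the corresponding Lebesgue constant and $E_n(f):=\inf\{\|f-p\|_\infty:p\in\mathcal P_n\}$ the error of best uniform approximation by polynomials of degree at most $n$.

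First I would establish the classical bound $\Lambda_n=\mathcal O(\sqrt n)$ for Legendre expansions. Writing $S_n g(x)=\int_{-1}^1 g(y)K_n(x,y)\varrho(y)\,\mathrm{d}y$ with $K_n$ the Christoffel--Darboux kernel, one has $\Lambda_n=\sup_{x\in[-1,1]}\int_{-1}^1|K_n(x,y)|\varrho(y)\,\mathrm{d}y$; the Christoffel--Darboux formula reduces $K_n$ to a combination of products $P_n(x)P_{n+1}(y)$, and the bound then follows from the elementary estimate $|P_k(x)|\le1$ on $[-1,1]$ together with the sharper pointwise bound $|P_k(\cos\theta)|\le C(k\sin\theta)^{-1/2}$ away from the endpoints. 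Granting this, since $S_n$ fixes $\mathcal P_n$ we get, for every $p\in\mathcal P_n$,
\[
\|f-S_n f\|_\infty=\|(f-p)-S_n(f-p)\|_\infty\le(1+\Lambda_n)\|f-p\|_\infty,
\]
and taking the infimum over $p\in\mathcal P_n$ yields $\|f-S_n f\|_\infty\le(1+\Lambda_n)E_n(f)$.

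The final ingredient is the decay of $E_n(f)$. Since $f\in\mathcal C^1[-1,1]$, the Jackson best--approximation estimate for continuously differentiable functions gives $E_n(f)\le\frac{C}{n}\,\omega\!\left(f';\tfrac1n\right)$, where $\omega(f';\cdot)$ is the modulus of continuity of $f'$ (alternatively one may estimate $\omega(f';1/n)$ directly from $f'\in\mathcal{BV}[-1,1]$). Because $f'$ is uniformly continuous on the compact interval $[-1,1]$, $\omega(f';1/n)\to0$, hence $E_n(f)=o(1/n)$. Combining the three steps,
\[
\|f-S_n f\|_\infty\le(1+\Lambda_n)E_n(f)=\mathcal O(\sqrt n)\cdot o\!\left(\tfrac1n\right)=o\!\left(n^{-1/2}\right)\longrightarrow0,
\]
which is the claimed uniform convergence on $[-1,1]$.

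The main obstacle is the first step: the estimate $\Lambda_n=\mathcal O(\sqrt n)$ is the only genuinely non-elementary input, and it rests on the Christoffel--Darboux representation of the reproducing kernel together with the sharp Bernstein/Szeg\H{o}-type pointwise bounds on Legendre polynomials that interpolate between the behaviour $\mathcal O(1)$ at $x=\pm1$ and $\mathcal O(k^{-1/2})$ in the interior; the remaining steps are routine bookkeeping. An alternative, kernel-free route is to integrate the coefficient integral $(f,P_k)_\varrho$ by parts twice, using $\int_{-1}^x P_k(t)\,\mathrm{d}t=\frac{P_{k+1}(x)-P_{k-1}(x)}{2k+1}$ and Stieltjes integration by parts against $\mathrm{d}f'$, to expose an explicit three--term ($P_{k-2},P_k,P_{k+2}$) structure with coefficients $a_k=\mathcal O(1/k)$, and then to run an Abel summation on the tail $\sum_{k>n}a_k P_k(x)$: the cancellation inside the three--term structure handles the endpoints $x=\pm1$, while the decay $|P_k(\cos\theta)|=\mathcal O(k^{-1/2})$ handles the interior; securing uniformity across all of $[-1,1]$, however, turns out to be essentially as delicate as the Lebesgue--constant bound itself.
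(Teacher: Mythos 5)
The paper does not actually prove this lemma: it is quoted as a classical result with a pointer to Jackson's 1930 book, and nothing in Section \ref{preliminaries} depends on the details of its proof. Your proposal therefore supplies a genuine argument where the paper only cites one, and the argument you give is a correct and standard route: since $S_n$ is a projection onto $\mathcal{P}_n$, the bound $\lVert f-S_nf\rVert_\infty\leq(1+\Lambda_n)E_n(f)$ is valid, Jackson's inequality gives $E_n(f)\leq \tfrac{C}{n}\,\omega(f';\tfrac1n)=o(1/n)$ for $f\in\mathcal{C}^1$, and Gronwall's classical theorem that the Legendre Lebesgue constants satisfy $\Lambda_n=\mathcal{O}(\sqrt{n})$ then yields uniform convergence at rate $o(n^{-1/2})$. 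Two remarks. First, your route does not even use $f'\in\mathcal{BV}[-1,1]$; the hypothesis $f\in\mathcal{C}^1$ suffices (and if you do use the variation, $E_n(f)=\mathcal{O}(\mathcal{V}[f']/n^2)$ upgrades the rate to $\mathcal{O}(n^{-3/2})$), so you prove a slightly stronger statement --- which is harmless here. Second, the only place where your write-up is thinner than it should be is the Lebesgue-constant bound itself: saying it ``follows from $\lvert P_k(x)\rvert\leq1$ together with $\lvert P_k(\cos\theta)\rvert\leq C(k\sin\theta)^{-1/2}$'' suppresses the real work, namely controlling $\int_{-1}^{1}\lvert K_n(x,y)\rvert\,\mathrm{d}y$ across the singular factor $(x-y)^{-1}$ in the Christoffel--Darboux formula and in the endpoint zones $1-\lvert x\rvert=\mathcal{O}(n^{-2})$; you flag this honestly, and since the bound $\Lambda_n=\mathcal{O}(\sqrt{n})$ is a well-documented classical theorem, invoking it is no worse than the paper's own decision to invoke Jackson wholesale --- but as a self-contained proof the proposal is incomplete precisely at that step. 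Your alternative kernel-free sketch (two integrations by parts via $\int_{-1}^{x}P_k = \frac{P_{k+1}(x)-P_{k-1}(x)}{2k+1}$, then summation by parts on the tail) is in fact much closer in spirit to the coefficient-decay arguments in Jackson's book, and would be the more natural way to exploit the stated hypothesis $f'\in\mathcal{BV}[-1,1]$.
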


We have the following interesting special answer of the Problem stated above:\\
For which $f$, i. e. for which function classes $K\subset\mathcal{C}\left[-1,1\right]$ and for which ratio $N/n$ do we have the convergence of the least square method ${LS}_n^N$ for each $x\in\left[-1,1\right]$?

\begin{korollar}\label{s18}
Let $f\in K:=\left\{g\in\mathcal{C}^1\left[-1,1\right]:g^\prime\in\mathcal{BV}\left[-1,1\right]\right\}$ and let $(N_n)_{n\in\mathbb{N}}$ be a sequence with 
\begin{equation}
\lim_{n\rightarrow\infty}{\frac{n^4}{N_n}}=0.
\end{equation}
Then the least square method ${LS}_n^{N_n}[f]$ converges for each $x\in[-1,1]$.
\end{korollar}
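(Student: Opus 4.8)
The plan is to specialize Theorem~\ref{s17} to the case $\alpha=0$ and then invoke Jackson's Theorem. First I would observe the arithmetic identity $3+\alpha+\max\{1,\alpha\}=3+0+\max\{1,0\}=4$ when $\alpha=0$, so that the growth hypothesis $N_n^{-1}n^{3+\alpha+\max\{1,\alpha\}}\to0$ in Theorem~\ref{s17} becomes exactly the hypothesis $n^4/N_n\to0$ of the corollary. Moreover, for $\alpha=\beta=0$ the Jacobi polynomials $P_k^{0,0}$ are the Legendre polynomials, so the Jacobi expansion in Theorem~\ref{s17} is the Legendre expansion, while the Hahn expansion appearing there is precisely $LS_n^{N_n}[f](x)$ by the representation of the least square operator recalled in Section~\ref{introduction}.

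Next I would verify that $f$ meets the hypotheses of Theorem~\ref{s17}. Since $f\in\mathcal{C}^1[-1,1]$, the derivative $f'$ is bounded on the compact interval $[-1,1]$, so $f$ is Lipschitz and hence $f\in\mathcal{BV}[-1,1]$; trivially $f\in\mathcal{C}[-1,1]$. Thus $f\in\mathcal{C}[-1,1]\cap\mathcal{BV}[-1,1]$, as required. The side constraint $n\le n(\alpha,N_n)$ in Theorem~\ref{s17} is harmless: from $n^4/N_n\to0$ we get $N_n/n^4\to\infty$, hence $\sqrt{2N_n+1}/n\to\infty$, so $n\le\tfrac12+\tfrac12\sqrt{2N_n+1}=n(0,N_n)$ for all sufficiently large $n$, and convergence as $n\to\infty$ is unaffected by finitely many indices.

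Finally, Jackson's Theorem (the Lemma above) guarantees that for $f\in\mathcal{C}^1[-1,1]$ with $f'\in\mathcal{BV}[-1,1]$ the Legendre series of $f$ converges uniformly, hence pointwise, to $f(x)$ for every $x\in[-1,1]$. Applying Theorem~\ref{s17} then gives $LS_n^{N_n}[f](x)\to f(x)$ for every $x\in[-1,1]$, which is the claim. I do not anticipate any real obstacle in this argument; the substance is entirely carried by Theorems~\ref{s16}/\ref{s17} together with Jackson's Theorem, and the only points needing a line of justification are the arithmetic identity $3+0+\max\{1,0\}=4$, the inclusion $\mathcal{C}^1[-1,1]\subset\mathcal{BV}[-1,1]$, and the eventual validity of $n\le n(0,N_n)$.
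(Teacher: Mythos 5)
Your proposal is correct and follows exactly the route the paper intends: specialize Theorem~\ref{s17} to $\alpha=0$ (so the exponent $3+\alpha+\max\{1,\alpha\}$ becomes $4$ and the Jacobi expansion is the Legendre expansion), use Jackson's Theorem for pointwise convergence of the Legendre series, and identify the Hahn expansion with ${LS}_n^{N_n}[f]$. Your additional checks (that $\mathcal{C}^1[-1,1]\subset\mathcal{BV}[-1,1]$ and that $n\le n(0,N_n)$ holds for all sufficiently large $n$) are details the paper leaves implicit, and they are handled correctly.
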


Finally, we describe our motivation for further application of our result:
\begin{anmerk}\label{motivation}
There are many other reasons to consider the truncated Hahn series. Continuous and discrete orthogonal polynomials are often used in applied mathematics like combinatorics, number theory, statistics, mathematical physics (cf., e.g., \cite{Karlin:02,nikiforov1991classical}). Also many numerical methods for ordinary or partial differential equations use series expansion of a function $f$ in sequence of orthogonal polynomials like Chebyshev, Legendre or Hermite polynomials, for details (cf., e.g., \cite{funaro2008polynomial, Canuto:01, ranocha2016summation, pettersson2009numerical}) and references therein. In the past normally continuous orthogonal polynomials are mainly utilized, but nowaday motivated by stochastic differential equations many researchers consider and apply discrete orthogonal polynomials (cf., e.g., \cite{xiu2002wiener, pettersson2015polynomial}).\\
But this is not our motivation why we are interested in the Hahn expansion and there behavior. Our interest is due to the fact, that we want to apply these polynomials in a spectral method and/or to construct a discrete filter from the eigenvalue equation similar to \cite{glaubitz2016application}. Therefore we already employ with the Hahn coefficients of a function $f$ and study in \cite{Goertz:02}, how fast the absolute values of the coefficients tend to zero compared to others like Legendre coefficients. The Hahn coefficients decay rapidly, but nevertheless further problems arises by using the Hahn expansion. The truncated series expansion may not describe qualitatively the original function in the pointwise sense, see \cite{Goertz:02}. Therefore we are interested in the pointwise error between a function $f$ and the truncated Hahn series.
\end{anmerk}

\section{Preliminaries}
\label{preliminaries}

In this section we demonstrate preliminary Lemmatas and prove our main Theorem \ref{s16}. The first step on the way to solve the above problem is to calculate a ratio $N/n$ for the convergence in equation (\ref{s2}). In equation (\ref{s2}) is $n$ fixed. But what happens, if $n$ and $N$ increase at the same time. The next Lemma give us a ratio $N/n$ for the boundedness of the Hahn polynomials.

\begin{lemma}\label{s1}
Let $\alpha>-\frac{1}{2}$ and let for $N\in\N$
\begin{align}
n(\alpha,N):=\frac{1}{2}-\alpha+\frac{1}{2}\sqrt{(2\alpha+1)(2\alpha+2N+1)}.
\end{align}
Then, for any $n\leq n(\alpha,N)$ holds
\begin{align}\label{s1a}
\max_{x\in[0,N]}{\left\lvert Q_n(x;\alpha,\alpha,N)\right\rvert}=1.
\end{align}
\end{lemma}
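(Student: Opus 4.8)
The plan is to express the symmetric Hahn polynomial $Q_n(x;\alpha,\alpha,N)$ in a form where the location of its extrema can be controlled, and then show that among all critical points and endpoints on $[0,N]$, the maximal absolute value is attained at $x=0$ (equivalently $x=N$, by the symmetry $Q_n(N-x;\alpha,\alpha,N)=(-1)^nQ_n(x;\alpha,\alpha,N)$), where $Q_n(0;\alpha,\alpha,N)=1$ by the standard normalization of Hahn polynomials. So it suffices to prove that $|Q_n(x;\alpha,\alpha,N)|\le 1$ for all $x\in[0,N]$ under the hypothesis $n\le n(\alpha,N)$.

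First I would record the hypergeometric representation $Q_n(x;\alpha,\alpha,N)={}_3F_2(-n,\,n+2\alpha+1,\,-x;\,\alpha+1,\,-N;\,1)$ and, more usefully, the three-term recurrence in the variable $n$ for fixed $x$, together with the second-order difference equation in $x$ that the Hahn polynomials satisfy:
\begin{equation}
B(x)\,\Delta\nabla Q_n(x) = \lambda_n\, Q_n(x),
\end{equation}
where, in the symmetric case $\beta=\alpha$, one has $B(x)=(x+\alpha)(N-x)$ on the appropriate lattice and the eigenvalue is $\lambda_n=-n(n+2\alpha+1)$. The idea is a discrete Sturm-type / energy argument: define $E(x):=Q_n(x)^2 + (\text{correction term involving }Q_n(x)Q_n(x\pm1))$ analogous to the classical quantity $y^2+y'^2/\lambda$ used for Jacobi polynomials, show $E$ is monotone (or that its discrete derivative has a sign determined by the sign of a certain quadratic in $x$), and deduce that the maximum of $Q_n^2$ over the lattice occurs at an endpoint precisely when the relevant quadratic does not change sign on $[0,N]$. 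Carrying out this computation, the condition for the sign not to change turns out to be exactly $n(n+2\alpha+1)\le (\alpha+1)(2\alpha+2N+1)$ — and solving this quadratic inequality for $n$ gives $n\le \tfrac12-\alpha+\tfrac12\sqrt{(2\alpha+1)(2\alpha+2N+1)}=n(\alpha,N)$, which is the stated threshold.

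An alternative, possibly cleaner route is to use the known explicit formula for the value and for the discrete derivative of $Q_n$ at the endpoint $x=0$, combined with the fact that between consecutive lattice points the polynomial cannot exceed its endpoint value unless it has an interior local extremum; at an interior local extremum $x_0$ the difference equation forces $Q_n(x_0)$ and $\Delta\nabla Q_n(x_0)$ to have opposite signs, pinning $|Q_n(x_0)|$ below $|Q_n(0)|=1$ as long as $|\lambda_n|\le B(x)$ uniformly — and $\min_{x\in[0,N]}B(x)$ on the symmetric lattice is attained at the ends, giving again the bound $n(n+2\alpha+1)\le(\alpha+1)(2\alpha+2N+1)$.

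The main obstacle I anticipate is making the discrete monotonicity/energy argument rigorous: unlike the continuous case, $\Delta\nabla Q_n$ is not a derivative, the lattice is finite, and one must be careful that the "correction term" in $E(x)$ is chosen so that its telescoping sum has a controllable sign on the whole grid — and in particular at the two lattice points adjacent to the endpoints, where the naive estimate is tightest (this is exactly where the threshold $n(\alpha,N)$ becomes sharp). Handling these boundary lattice points, and confirming that equality $\max|Q_n|=1$ (rather than a strict inequality) is genuinely attained at $x=0$ and $x=N$, is the delicate part; everything else is a bookkeeping exercise with the ${}_3F_2$ and the difference equation.
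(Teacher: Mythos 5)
There is a genuine gap: what you have written is a plan, not a proof. The central analytic content --- the construction of the discrete energy function $E(x)$, the choice of its correction term, the sign/telescoping analysis, and the treatment of the lattice points next to the endpoints --- is exactly what you defer as an ``anticipated obstacle,'' yet it is the whole substance of the claim. Moreover, the one quantitative assertion you do make is inconsistent with the statement you are trying to prove: the inequality $n(n+2\alpha+1)\le(\alpha+1)(2\alpha+2N+1)$ does \emph{not} solve to $n\le n(\alpha,N)$. Solving your quadratic gives
\begin{equation}
n\le-\alpha-\tfrac12+\sqrt{\left(\alpha+\tfrac12\right)^2+(\alpha+1)(2\alpha+2N+1)},
\end{equation}
which for $\alpha=0$ is roughly $\sqrt{2N}$, whereas $n(0,N)=\tfrac12+\tfrac12\sqrt{2N+1}\approx\sqrt{N/2}$; the two thresholds differ asymptotically by a factor of $2$, so the claimed ``exact'' match with $n(\alpha,N)$ fails. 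Your alternative route has a further internal problem: you invoke ``$|\lambda_n|\le B(x)$ uniformly'' with $B(x)=(x+\alpha)(N-x)$, but $B$ vanishes at $x=N$, so no such uniform lower bound on $B$ over $[0,N]$ can hold, and the asserted minimization ``at the ends'' does not produce the stated condition. The only parts that are solid are the normalization $Q_n(0;\alpha,\alpha,N)=1$ and the symmetry $Q_n(N-x;\alpha,\alpha,N)=(-1)^nQ_n(x;\alpha,\alpha,N)$, which reduce the lemma to the bound $|Q_n|\le1$ on $[0,N]$; the bound itself remains unproved in your write-up.

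For comparison: the paper does not prove this estimate from scratch at all --- it obtains the boundedness, with precisely the threshold $n(\alpha,N)$, by citing an external result (Theorem~3.2 of reference~[2]). So a self-contained argument of the kind you sketch would indeed be a genuinely different (and more informative) route, but to count as a proof you would need to actually carry out the discrete Sturm-type monotonicity argument, verify the sign condition on the full interval including the boundary lattice points, and show that the resulting degree restriction is implied by $n\le n(\alpha,N)$ --- none of which is done, and the algebra you propose currently points to a different threshold.
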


\begin{proof}
This result follows directly from \cite[Theorem 3.2.]{2}.
\end{proof}

\begin{anmerk}\label{maxjacobi1}
Let $\alpha>-\frac{1}{2}$. Then one has
\begin{align}\label{maxjacobi1a}
\max_{x\in[-1,1]}{\left\lvert P_k^{\alpha,\alpha}(x)\right\rvert}=\binom{k+\alpha}{k}
\end{align}
(cf., e.g., \cite[p. 786]{4}).
\end{anmerk}

We now consider the recurrences, that satisfy the Jacobi polynomials and the Hahn polynomials. After change the equations, we can compare the coefficients. With the uniqueness of the solutions of the recurrences we obtain the ratio $N/n$ for the convergence in equation (\ref{s2}).

\begin{anmerk}\label{rekurjacobi1}
The Jacobi polynomials satisfy the following recurrence
\begin{align}
x P_k^{\alpha,\beta}(x)=\alpha_k P_{k+1}^{\alpha,\beta}(x)+\beta_k P_k^{\alpha,\beta}(x)+\gamma_k P_{k-1}^{\alpha,\beta}(x),\ \mbox{for}\ x\in[-1,1]
\end{align}
with the constants
\begin{align}
\alpha_k&=\frac{2(k+1)(k+\alpha+\beta+1)}{(2k+\alpha+\beta+1)(2k+\alpha+\beta+2)},\\
\beta_k&=\frac{\beta^2-\alpha^2}{(2k+\alpha+\beta)(2k+\alpha+\beta+2)},\\
\gamma_k&=\frac{2(k+\alpha)(k+\beta)}{(2k+\alpha+\beta)(2k+\alpha+\beta+1)},
\end{align}
(cf., e.g., \cite[p. 217]{koekoek2010hypergeometric}).
\end{anmerk}

\begin{anmerk}\label{rekurhahn1}
The Hahn polynomials satisfy the following recurrence
\begin{align}\label{rekurhahn1a}
-x Q_k(x)=A_k Q_{k+1}(x)-\left(A_k+C_k\right)Q_k(x)+C_k Q_{k-1}(x),
\end{align}
for $x\in[0,N]$ with the constants
\begin{align}
A_k&=\frac{(k+\alpha+\beta+1)(k+\alpha+1)(N-k)}{(2k+\alpha+\beta+1)(2k+\alpha+\beta+2)},\\
C_k&=\frac{k(k+\alpha+\beta+N+1)(k+\beta)}{(2k+\alpha+\beta)(2k+\alpha+\beta+1)},
\end{align}
(cf., e.g., \cite[p. 204-205]{koekoek2010hypergeometric}).
\end{anmerk}

\begin{anmerk}\label{rekurhahn2}
For the sake of brevity we introduce
\begin{align}
\tilde{Q}_k(x):=(-1)^k\binom{k+\alpha}{k}Q_k\left(\frac{N}{2}(1+x);\alpha,\alpha,N\right).
\end{align}
By substitution in (\ref{rekurhahn1a}) we obtain for $\tilde{Q}_k$ with $x\in[-1,1]$ the recurrence
\begin{align*}
&\frac{N}{2}(1+x)\tilde{Q}_k(x)\\
=&A_k\frac{\binom{k+\alpha}{k}}{\binom{k+1+\alpha}{k+1}}\tilde{Q}_{k+1}(x)+\left(A_k+C_k\right)\tilde{Q}_k(x)+C_k\frac{\binom{k+\alpha}{k}}{\binom{k-1+\alpha}{k-1}}\tilde{Q}_{k-1}(x)\\
=&A_k\frac{k+1}{k+1+\alpha}\tilde{Q}_{k+1}(x)+\left(A_k+C_k\right)\tilde{Q}_k(x)+C_k\frac{k+\alpha}{k}\tilde{Q}_{k-1}(x).
\end{align*}
Change the equation leads to
\begin{align*}
&x\tilde{Q}_k(x)\\
=&\frac{2}{N}A_k\frac{k+1}{k+1+\alpha}\tilde{Q}_{k+1}(x)+\frac{2}{N}\left(A_k+C_k\right)\tilde{Q}_k(x)-\tilde{Q}_k(x)+\frac{2}{N}C_k\frac{k+\alpha}{k}\tilde{Q}_{k-1}(x)\\
=&\frac{k+1}{k+1+\alpha}\frac{k+2\alpha+1}{2k+2\alpha+1}\left(1-\frac{k}{N}\right)\tilde{Q}_{k+1}(x)+\frac{k+\alpha}{k}\frac{k(k+2\alpha+N+1)}{N(2k+2\alpha+1)}\tilde{Q}_{k-1}(x)\\
=&\alpha_k\left(1-\frac{k}{N}\right)\tilde{Q}_{k+1}(x)+\gamma_k\left(1+\frac{k+2\alpha+1}{N}\right)\tilde{Q}_{k-1}(x).
\end{align*}
\end{anmerk}
So we obtain the relations in the following Lemma by using the Lemma \ref{s1} and the equation (\ref{s2}).

\begin{lemma}\label{s3}
Let $\alpha>-\frac{1}{2}$ and let for $N\in\N$
\begin{align}
n(\alpha,N):=\frac{1}{2}-\alpha+\frac{1}{2}\sqrt{(2\alpha+1)(2\alpha+2N+1)}.
\end{align}
Then one has the following relations:
\begin{align}\label{s3a}
(-1)^n\binom{n+\alpha}{n}Q_n\left(\frac{N}{2}(1+x);\alpha,\alpha,N\right)=P_n^{\alpha,\alpha}(x)+\mathcal{O}\left(\frac{n^2}{N}\right),
\end{align}
with $n\leq n(\alpha,N)$, for $x\in[-1,1]$ and $-\frac{1}{2}<\alpha<1$.
\begin{align}\label{s3b}
(-1)^n\binom{n+\alpha}{n}Q_n\left(\frac{N}{2}(1+x);\alpha,\alpha,N\right)=P_n^{\alpha,\alpha}(x)+\mathcal{O}\left(\frac{n^{1+\alpha}}{N}\right),
\end{align}
with $n\leq n(\alpha,N)$, for $x\in[-1,1]$ and $1\leq\alpha$.
\end{lemma}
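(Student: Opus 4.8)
The plan is to compare the two families via the three–term recurrences of Remarks \ref{rekurjacobi1}--\ref{rekurhahn2}. With $\tilde{Q}_k(x):=(-1)^k\binom{k+\alpha}{k}Q_k\bigl(\tfrac{N}{2}(1+x);\alpha,\alpha,N\bigr)$ and $e_k:=\tilde{Q}_k-P_k^{\alpha,\alpha}$, I would subtract the Jacobi recurrence of Remark \ref{rekurjacobi1} (with $\beta=\alpha$, so $\beta_k=0$) from the $\tilde{Q}_k$-recurrence of Remark \ref{rekurhahn2}. Using $xP_k^{\alpha,\alpha}-\gamma_kP_{k-1}^{\alpha,\alpha}=\alpha_kP_{k+1}^{\alpha,\alpha}$, the identity $(2k+2\alpha+1)\gamma_k=k+\alpha$, and the classical Jacobi relation $(1-x^2)\tfrac{d}{dx}P_k^{\alpha,\alpha}(x)=-kxP_k^{\alpha,\alpha}(x)+(k+\alpha)P_{k-1}^{\alpha,\alpha}(x)$, everything collapses to
\[
\alpha_k\Bigl(1-\tfrac{k}{N}\Bigr)e_{k+1}(x)=x\,e_k(x)-\gamma_k\Bigl(1+\tfrac{k+2\alpha+1}{N}\Bigr)e_{k-1}(x)-s_k(x),
\]
where $s_k(x):=\tfrac{1}{N}(1-x^2)\tfrac{d}{dx}P_k^{\alpha,\alpha}(x)$, with initial data $e_0\equiv e_1\equiv0$ (a one-line check, since $\alpha_0=(\alpha+1)^{-1}$). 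So $e_k$ solves the Jacobi recurrence with a small source $s_k$ and with coefficient perturbations of relative size $\mathcal{O}(k/N)$.

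Two auxiliary facts are then collected: (i) the endpoint values cancel --- $Q_k(0;\alpha,\alpha,N)=1$ and, by Chu--Vandermonde, $Q_k(N;\alpha,\alpha,N)={}_2F_1(-k,k+2\alpha+1;\alpha+1;1)=(-1)^k$, while $P_k^{\alpha,\alpha}(\pm1)=(\pm1)^k\binom{k+\alpha}{k}$, so $e_k(\pm1)=0$ for all $k$; (ii) since $(P_k^{\alpha,\alpha})'=\tfrac{2k+2\alpha+1}{2}P_{k-1}^{\alpha+1,\alpha+1}$, the classical Szeg\H{o}-type pointwise bounds for Jacobi polynomials give $\|(1-x^2)P_{k-1}^{\alpha+1,\alpha+1}\|_\infty=\mathcal{O}(k^{\max\{-1/2,\alpha-1\}})$, hence $\|s_k\|_\infty=\mathcal{O}(k^{\max\{1/2,\alpha\}}/N)$.

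To propagate the vanishing initial data I would \emph{not} iterate forward with the triangle inequality --- that is hopeless, because the homogeneous Jacobi recurrence has characteristic roots on the unit circle for $x\in(-1,1)$ and a double root at $x=\pm1$ (its solutions grow only polynomially), yet $|x\,e_k|\le|e_k|$ does not see this and manufactures spurious exponential growth in $k$. Instead I would use the discrete variation-of-parameters (Green's function) formula $e_n=-\sum_{1\le k<n}\frac{\hat{R}_n\hat{P}_k-\hat{P}_n\hat{R}_k}{\hat{\alpha}_k\hat{W}_k}\,s_k$, built from two solutions $\hat{P}_k,\hat{R}_k$ of the \emph{perturbed} homogeneous recurrence and their Casoratian $\hat{W}_k$ (using the perturbed equation keeps $s_k$ free of $e_k$'s, so there is no circularity). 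On the range $k\le n(\alpha,N)$ the total coefficient perturbation $\sum_k k/N=\mathcal{O}(n^2/N)=\mathcal{O}(1)$ is bounded, so $\hat{P}_k,\hat{R}_k$ stay within a fixed factor of $P_k^{\alpha,\alpha}$ and its companion (one may take $\hat{P}_k=\tilde{Q}_k$, which Lemma \ref{s1} bounds by $\binom{k+\alpha}{k}$) and obey the same Szeg\H{o}-type estimates; with $\hat{W}_k\sim c/k$ this yields $\|e_n\|_\infty=\mathcal{O}\bigl(\sum_{k\le n}\|s_k\|_\infty\bigr)=\mathcal{O}\bigl(n^{\max\{3/2,\,1+\alpha\}}/N\bigr)$, which is $\mathcal{O}(n^2/N)$ when $-\tfrac12<\alpha<1$ and $\mathcal{O}(n^{1+\alpha}/N)$ when $\alpha\ge1$, i.e.\ exactly (\ref{s3a}) and (\ref{s3b}).

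The step I expect to be the main obstacle is making the propagation uniform in $x\in[-1,1]$. On any fixed compact subinterval of $(-1,1)$ both homogeneous solutions are $\mathcal{O}(k^{-1/2})$, the Green's function is $\mathcal{O}(1)$, and the estimate is routine; but near $x=\pm1$ the two solutions decouple ($P_k\sim k^\alpha$ versus its companion $\sim k^{-\alpha}$ at $x=1$) and the Green's-function bound degenerates. This is precisely where fact (i) is essential: together with the Markov--Bernstein inequality it forces $\sup_{|1-x|\lesssim n^{-2}}|e_n(x)|$ (and likewise near $-1$) to be only a fraction of $\|e_n\|_\infty$, so the maximum is attained in the region where the asymptotic estimate applies. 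I expect the bookkeeping in this endpoint transition layer --- and pinning down the Szeg\H{o} constants --- to be the genuinely delicate parts; the rest is the recurrence algebra above.
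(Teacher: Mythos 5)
Your starting point is exactly the paper's: the paper's entire argument for Lemma \ref{s3} consists of the recurrence manipulation in Remark \ref{rekurhahn2} plus the one-sentence appeal to Lemma \ref{s1} and the limit relation (\ref{s2}) (``with the uniqueness of the solutions of the recurrences\dots''); no further proof is given. Your algebra is correct and in fact tidier than what is in the paper: subtracting the two recurrences, the inhomogeneity really does collapse to $s_k(x)=\tfrac{1}{N}(1-x^2)\tfrac{\mathrm{d}}{\mathrm{d}x}P_k^{\alpha,\alpha}(x)$ via $(2k+2\alpha+1)\gamma_k=k+\alpha$, the initial data $e_0\equiv e_1\equiv 0$ and the endpoint identities $e_k(\pm1)=0$ check out (Chu--Vandermonde gives $Q_k(N)=(-1)^k$), and your estimate $\lVert s_k\rVert_\infty=\mathcal{O}(k^{\max\{1/2,\alpha\}}/N)$ is right. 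Your observation that a naive forward iteration with the triangle inequality produces growth like $(1+\sqrt{2})^k$ is also accurate, and it is a fair criticism that applies to the paper's own sketch, which never explains how the boundedness in Lemma \ref{s1} is converted into the rates (\ref{s3a})--(\ref{s3b}).

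The gap is that your substitute for the naive iteration is asserted rather than proved, and it carries the entire analytic weight. The claims that the two solutions $\hat{P}_k,\hat{R}_k$ of the \emph{perturbed} homogeneous recurrence ``stay within a fixed factor'' of $P_k^{\alpha,\alpha}$ and its companion, obey the same Szeg\H{o}-type bounds, and have Casoratian $\hat{W}_k\sim c/k$, uniformly in $x$ and in $k\leq n\leq n(\alpha,N)$, do not follow merely from $\sum_{k\leq n}k/N=\mathcal{O}(1)$: an $\ell^1$ coefficient perturbation yields such comparability (by a transfer-matrix/Gronwall argument) only where the unperturbed transfer-matrix products are uniformly bounded, i.e.\ in the oscillatory regime, whereas near the turning points $1-x^2\sim k^{-2}$ the two solutions separate and the kernel $(\hat{R}_n\hat{P}_k-\hat{P}_n\hat{R}_k)/(\hat{\alpha}_k\hat{W}_k)$ is not $\mathcal{O}(1)$ but grows like a power of $(1-x^2)^{-1}$. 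Excising layers of width $n^{-2}$ at $\pm1$ via $e_n(\pm1)=0$ and Markov's inequality is a sound idea, but for $k<n$ the turning region $1-x^2\sim k^{-2}$ intrudes well into the retained region, so whether the kernel's loss there is compensated by the extra $(1-x^2)$-decay of $s_k$ is precisely the bookkeeping you defer; also, your companion asymptotics at $x=1$ ($\sim k^{-\alpha}$) fails at $\alpha=0$, where the second solution grows like $\log k$. So this is a credible programme -- and, if completed, it would give $\mathcal{O}(n^{\max\{3/2,1+\alpha\}}/N)$, stronger than (\ref{s3a})--(\ref{s3b}) -- but the decisive uniform Green's-function estimate is missing, and that is exactly where the difficulty of the lemma (left untreated by the paper as well) resides.
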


The following Lemmatas \ref{s4} - \ref{trapezformel} are technical and preliminary Lemmatas for the next steps. They give us some properties of Jacobi polynomials and Hahn polynomials.

\begin{lemma}\label{s4}
One has
\begin{align}\label{s4a}
\begin{aligned}
&\left<Q_n(\cdot;\alpha,\beta,N),Q_n(\cdot;\alpha,\beta,N)\right>_\omega\\
=&\frac{n!n!\Gamma(n+\alpha+\beta+2+N)\Gamma(\alpha+1)(N-n)!}{\Gamma(\beta+1)\Gamma(1+\alpha+n)\Gamma(1+\alpha+n)N!N!2^{\alpha+\beta+1}}\left(P_n^{\alpha,\beta},P_n^{\alpha,\beta}\right)_\varrho.
\end{aligned}
\end{align}
\end{lemma}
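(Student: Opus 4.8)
The plan is to derive (\ref{s4a}) directly from the two closed-form normalizations already recorded: equation (\ref{orthoghahn1a}) for $\left<Q_n,Q_n\right>_\omega$ and equation (\ref{orthogjacobi1a}) for $\left(P_n^{\alpha,\beta},P_n^{\alpha,\beta}\right)_\varrho$. Since both are explicit, the lemma is nothing more than an algebraic rearrangement: rewrite everything in terms of Gamma functions and factorials, then cancel the matching factors.

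First I would convert each Pochhammer symbol appearing in (\ref{orthoghahn1a}) via $(a)_k=\Gamma(a+k)/\Gamma(a)$; thus $(n+\alpha+\beta+1)_{N+1}=\Gamma(n+\alpha+\beta+N+2)/\Gamma(n+\alpha+\beta+1)$, $(\beta+1)_n=\Gamma(n+\beta+1)/\Gamma(\beta+1)$ and $(\alpha+1)_n=\Gamma(n+\alpha+1)/\Gamma(\alpha+1)$. The one factor that resists this treatment is $(-N)_n$, since $-N$ is a non-positive integer; here I would instead use the finite-product identity $(-N)_n=(-1)^nN!/(N-n)!$, valid for $0\le n\le N$, which is precisely the range in which the Hahn polynomials live. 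The sign $(-1)^n$ then cancels the explicit $(-1)^n$ in the numerator of (\ref{orthoghahn1a}), and one is left with
\begin{equation*}
\left<Q_n,Q_n\right>_\omega=\frac{\Gamma(n+\alpha+\beta+N+2)\,\Gamma(n+\beta+1)\,n!\,\Gamma(\alpha+1)\,(N-n)!}{(2n+\alpha+\beta+1)\,\Gamma(n+\alpha+\beta+1)\,\Gamma(\beta+1)\,\Gamma(n+\alpha+1)\,N!\,N!}.
\end{equation*}

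Next I would divide this expression by (\ref{orthogjacobi1a}) and cancel the common factors $2n+\alpha+\beta+1$, $\Gamma(n+\alpha+\beta+1)$ and $\Gamma(n+\beta+1)$. What survives is exactly the prefactor displayed in (\ref{s4a}): the extra $n!$ from the denominator of the Jacobi norm accounts for the $n!\,n!$ in the numerator, and the extra $\Gamma(n+\alpha+1)$ produces the squared $\Gamma(1+\alpha+n)$ in the denominator. Because the whole argument is bookkeeping with two explicit formulas, there is no genuine obstacle; the only step that demands a moment of care is the handling of $(-N)_n$, where the naive Gamma-ratio is invalid and one must pass to the factorial form while tracking the sign.
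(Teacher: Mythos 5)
Your proposal is correct and follows essentially the same route as the paper: the paper's proof likewise forms the ratio of the explicit normalizations (\ref{orthoghahn1a}) and (\ref{orthogjacobi1a}) and simplifies to the stated prefactor, with your Pochhammer-to-Gamma conversions and the identity $(-N)_n=(-1)^nN!/(N-n)!$ being exactly the bookkeeping implicit in that computation. Your intermediate closed form for $\left<Q_n,Q_n\right>_\omega$ and the subsequent cancellations check out.
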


\begin{proof}
With the equations (\ref{orthoghahn1a}) and (\ref{orthogjacobi1a}) we have
\begin{align*}
&\frac{\left<Q_n(\cdot;\alpha,\beta,N),Q_n(\cdot;\alpha,\beta,N)\right>_\omega}{\left(P_n^{\alpha,\beta},P_n^{\alpha,\beta}\right)_\varrho}\\
=&\frac{(-1)^n(n+\alpha+\beta+1)_{N+1}(\beta+1)_n n!}{(2n+\alpha+\beta+1)(\alpha+1)_n(-N)_n N!}\cdot\frac{(2n+\alpha+\beta+1)n!\Gamma(n+\alpha+\beta+1)}{2^{\alpha+\beta+1}\Gamma(n+\alpha+1)\Gamma(n+\beta+1)}\\
=&\frac{n!n!\Gamma(n+\alpha+\beta+2+N)\Gamma(\alpha+1)(N-n)!}{\Gamma(\beta+1)\Gamma(1+\alpha+n)\Gamma(1+\alpha+n)N!N!2^{\alpha+\beta+1}}.
\end{align*}
\end{proof}

\begin{lemma}\label{gamma1}[(cf., e.g., \cite[p. 257]{4})]
For $a, b>0$ holds
\begin{align}\label{gamma1a}
N^{b-a}\frac{\Gamma(N+a)}{\Gamma(N+b)}=1+\frac{(a-b)(a+b-1)}{2N}+\mathcal{O}\left(\frac{1}{N^2}\right).
\end{align}
\end{lemma}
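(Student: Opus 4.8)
The plan is to pass to logarithms and invoke Stirling's asymptotic expansion. Since $a,b>0$, both $\Gamma(N+a)$ and $\Gamma(N+b)$ are positive for every $N\in\N$, so the quantity $L_N:=\log\bigl(N^{b-a}\Gamma(N+a)/\Gamma(N+b)\bigr)$ is well-defined, and it suffices to show that $L_N=\tfrac{(a-b)(a+b-1)}{2N}+\mathcal{O}(N^{-2})$; the claim then follows by exponentiating and using $e^{x}=1+x+\mathcal{O}(x^2)$ as $x\to0$, which is legitimate here because the exponent will be of order $N^{-1}$.

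For the logarithm I would start from Stirling's series $\log\Gamma(z)=(z-\tfrac12)\log z-z+\tfrac12\log(2\pi)+\tfrac{1}{12z}+\mathcal{O}(z^{-3})$ applied at $z=N+a$ and at $z=N+b$. Upon subtraction, the constants $\tfrac12\log(2\pi)$ cancel, the linear terms $-z$ contribute only $-(a-b)$, and the $\tfrac{1}{12z}$ terms differ by $\mathcal{O}(N^{-2})$. The heart of the computation is the pair $(N+a-\tfrac12)\log(N+a)$ and $(N+b-\tfrac12)\log(N+b)$, which I would expand via $\log(N+c)=\log N+\tfrac{c}{N}-\tfrac{c^2}{2N^2}+\mathcal{O}(N^{-3})$.

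Multiplying out, the coefficient of $\log N$ in $(N+c-\tfrac12)\log(N+c)$ is exactly $N+c-\tfrac12$, so the two $\log N$ contributions combine to $(a-b)\log N$, which is precisely cancelled by the prefactor $N^{b-a}$ after taking its logarithm $(b-a)\log N$. The remaining, non-$\log N$, part of $(N+c-\tfrac12)\log(N+c)$ works out to $c+\tfrac{c(c-1)}{2N}+\mathcal{O}(N^{-2})$; taking the difference for $c=a$ and $c=b$ and using $a(a-1)-b(b-1)=(a-b)(a+b-1)$ gives $(a-b)+\tfrac{(a-b)(a+b-1)}{2N}+\mathcal{O}(N^{-2})$. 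Combining this with the $-(a-b)$ from the linear terms, and with the cancellation of the $\log N$ terms, yields $L_N=\tfrac{(a-b)(a+b-1)}{2N}+\mathcal{O}(N^{-2})$, as required.

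The only delicate point is the bookkeeping of error terms: one must confirm that the $\mathcal{O}(N^{-3})$ tail of the logarithm expansion, once multiplied by the factor $N+c-\tfrac12$ of size $\mathcal{O}(N)$, contributes at most $\mathcal{O}(N^{-2})$, and likewise that Stirling's own $\mathcal{O}(z^{-3})$ remainder and the final step $e^{x}=1+x+\mathcal{O}(x^2)$ each remain within $\mathcal{O}(N^{-2})$. These are routine order estimates, and I expect no genuine obstacle beyond this careful tracking of powers of $N$.
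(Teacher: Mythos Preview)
Your argument via Stirling's expansion is correct; the bookkeeping of the $\log N$ cancellation, the identity $a(a-1)-b(b-1)=(a-b)(a+b-1)$, and the error tracking all go through exactly as you outline. Note, however, that the paper does not supply its own proof of this lemma: it is quoted as a standard asymptotic from the literature (the citation ``cf., e.g., [p.~257]'' in the statement), so there is no in-paper argument to compare against. Your derivation is precisely the classical one behind such citations and would serve perfectly well as a self-contained justification.
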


\begin{lemma}\label{s5}
For $\alpha=\beta$ holds
\begin{align}\label{s5a}
\omega\left(\frac{N}{2}(1+x)\right)=\frac{N^{2\alpha}}{2^{2\alpha}\Gamma(\alpha+1)\Gamma(\alpha+1)}\left(1+\mathcal{O}\left(\frac{1}{N}\right)\right)\varrho(x),
\end{align}
for $x\in[-1,1]$.
\end{lemma}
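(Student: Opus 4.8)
The plan is to rewrite the discrete weight $\omega$ in terms of Gamma functions and then apply the asymptotic expansion of Lemma~\ref{gamma1} to each factor. For $\alpha=\beta$ and a point $x\in(-1,1)$ I set $t:=\frac{N}{2}(1+x)$, so that $N-t=\frac{N}{2}(1-x)$; interpreting the generalized binomial coefficients through $\binom{\alpha+s}{s}=\Gamma(\alpha+s+1)/\bigl(\Gamma(s+1)\Gamma(\alpha+1)\bigr)$ (which is needed since $\frac{N}{2}(1+x)$ is in general not an integer) gives
\[
\omega\!\left(\tfrac{N}{2}(1+x)\right)=\frac{1}{\Gamma(\alpha+1)^2}\cdot\frac{\Gamma(t+\alpha+1)}{\Gamma(t+1)}\cdot\frac{\Gamma(N-t+\alpha+1)}{\Gamma(N-t+1)}.
\]
The first task is therefore to control the two quotients of Gamma functions.

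Next I would apply Lemma~\ref{gamma1} with $a=\alpha+1$ and $b=1$ (both positive, using $\alpha>-1$) to each quotient. This yields $\Gamma(t+\alpha+1)/\Gamma(t+1)=t^{\alpha}\bigl(1+\mathcal{O}(1/t)\bigr)$ and likewise $\Gamma(N-t+\alpha+1)/\Gamma(N-t+1)=(N-t)^{\alpha}\bigl(1+\mathcal{O}(1/(N-t))\bigr)$, legitimate because for a fixed $x\in(-1,1)$ both $t$ and $N-t$ tend to infinity as $N\to\infty$. Substituting $t=\frac{N}{2}(1+x)$ and $N-t=\frac{N}{2}(1-x)$ and using that $\varrho(x)=(1-x)^{\alpha}(1+x)^{\alpha}$ when $\alpha=\beta$, the product $t^{\alpha}(N-t)^{\alpha}$ equals exactly $\frac{N^{2\alpha}}{2^{2\alpha}}\varrho(x)$, which is the claimed main term.

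It then remains to collect the error factors: for $x\in(-1,1)$ one has $1/t=\frac{2}{N(1+x)}$ and $1/(N-t)=\frac{2}{N(1-x)}$, so $\bigl(1+\mathcal{O}(1/t)\bigr)\bigl(1+\mathcal{O}(1/(N-t))\bigr)=1+\mathcal{O}(1/N)$, and the assertion follows. The degenerate case $\alpha=0$ is trivial, since then $\omega\equiv\varrho\equiv1$ and $\Gamma(\alpha+1)=1$.

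The point needing care — and the only genuine obstacle — is the meaning of the $\mathcal{O}(1/N)$ term near the ends of the interval. The constant implied in $\mathcal{O}(1/t)+\mathcal{O}(1/(N-t))$ behaves like $1/(1-x^2)$ as $x\to\pm1$, so the estimate is not uniform on the closed interval: it holds for every fixed $x\in(-1,1)$ with an $x$-dependent constant (equivalently, uniformly on each compact subset of $(-1,1)$), which is precisely what the pointwise-convergence arguments in the sequel use; at $x=\pm1$ the identity is to be read in this pointwise sense as well. If a uniform statement on all of $[-1,1]$ were required, one would instead have to carry the exact ratio $\Gamma(t+\alpha+1)/\bigl(\Gamma(t+1)\,t^{\alpha}\bigr)$ — which is bounded and well behaved for $t\ge1$ — through the subsequent estimates rather than replacing it by $1+\mathcal{O}(1/N)$.
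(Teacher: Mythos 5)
Your proof follows essentially the same route as the paper's: both rewrite $\omega\left(\frac{N}{2}(1+x)\right)/\varrho(x)$ in terms of Gamma functions via the generalized binomial coefficients and then apply Lemma~\ref{gamma1} to the two Gamma-quotients with arguments $\frac{N}{2}(1+x)$ and $\frac{N}{2}(1-x)$, so the argument is correct and matches the paper. Your extra observation that the implied constant in $\mathcal{O}(1/N)$ behaves like $1/(1-x^2)$, so the statement is pointwise on $(-1,1)$ (locally uniform) rather than uniform up to $x=\pm1$, is a legitimate caveat that the paper's own proof passes over in silence.
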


\begin{proof}
For $\alpha=\beta$ and each $x\in[-1,1]$ holds
\begin{align*}
&\frac{\omega\left(\frac{N}{2}(1+x)\right)}{\varrho(x)}\\
=&\binom{\alpha+\frac{N}{2}(1+x)}{\frac{N}{2}(1+x)}\binom{\alpha+N-\frac{N}{2}(1+x)}{N-\frac{N}{2}(1+x)}(1-x)^{-\alpha}(1+x)^{-\alpha}\\
=&\frac{\Gamma\left(\alpha+\frac{N}{2}(1+x)+1\right)\Gamma\left(\alpha+N-\frac{N}{2}(1+x)+1\right)(1-x)^{-\alpha}(1+x)^{-\alpha}}{\Gamma\left(\frac{N}{2}(1+x)+1\right)\Gamma\left(\alpha+1\right)\Gamma\left(N-\frac{N}{2}(1+x)+1\right)\Gamma\left(\alpha+1\right)}\\
=&\frac{\Gamma\left(\alpha+1+\frac{N}{2}(1+x)\right)\Gamma\left(\alpha+1+\frac{N}{2}(1-x)\right)}{\Gamma\left(\frac{N}{2}(1+x)+1\right)\Gamma\left(\alpha+1\right)\Gamma\left(\frac{N}{2}(1-x)+1\right)\Gamma\left(\alpha+1\right)(1-x)^{\alpha}(1+x)^{\alpha}}\\
\overset{(\ref{gamma1a})}{=}&\frac{\left(\frac{N}{2}(1+x)\right)^\alpha\left(\frac{N}{2}(1-x)\right)^\alpha}{\Gamma\left(\alpha+1\right)\Gamma\left(\alpha+1\right)(1-x)^{\alpha}(1+x)^{\alpha}}\left(1+\mathcal{O}\left(\frac{1}{N}\right)\right)\\
=&\frac{N^{2\alpha}}{2^{2\alpha}\Gamma(\alpha+1)\Gamma(\alpha+1)}\left(1+\mathcal{O}\left(\frac{1}{N}\right)\right).
\end{align*}
\end{proof}

In the next Lemma, we consider the total variation of the Jacobi polynomials. The total variation of a real-valued function $f:[a,b]\rightarrow\mathbb{R}$ is the value
\begin{align}
\mathcal{V}_a^b\left[f\right]=\sup_{P\in\mathcal{P}}{\sum_{i=0}^{n-1}{\left\lvert f\left(x_{i+1}\right)-f\left(x_i\right)\right\rvert}},
\end{align}
whereby the supremum is taken over the set
\begin{align}
\mathcal{P}=\left\{P=\left\{x_0,\dots,x_n\right\}:\ a=x_0<x_1<\dots<x_{n-1}<x_n=b,\ n\in\mathbb{N}\right\}
\end{align}
of all partitions of the interval $[a,b]$ (cf., e.g., \cite[p. 493]{7}). A real-valued function $f:[a,b]\rightarrow\mathbb{R}$ is said to be of bounded variation, if its total variation is finite. We define the class of functions of bounded variation
\begin{align}
\mathcal{BV}\left[a,b\right]=\left\{f:[a,b]\rightarrow\mathbb{R}:\ \exists C>0:\mathcal{V}_a^b\left[f\right]<C\right\}.
\end{align}
For the sake of brevity we introduce $\mathcal{V}\left[f\right]\equiv\mathcal{V}_{-1}^1\left[f\right]$.

\begin{lemma}\label{totvarjacobi1}
For $\alpha\geq0$ we have
\begin{align}\label{totvarjacobi1a}
\mathcal{V}\left[P_n^{\alpha,\alpha}\right]\leq 2n\binom{n+\alpha}{n}.
\end{align}
\end{lemma}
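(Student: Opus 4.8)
The plan is to use that $P_n^{\alpha,\alpha}$ is a polynomial, hence continuously differentiable on $[-1,1]$, so that its total variation equals the sum of its variations over the maximal intervals on which it is monotone. The key observation is that there are only $n$ such intervals, and that on each of them the variation is at most twice the maximum modulus of $P_n^{\alpha,\alpha}$ on $[-1,1]$, which is $\binom{n+\alpha}{n}$ by Remark \ref{maxjacobi1}. Multiplying the two bounds gives precisely $2n\binom{n+\alpha}{n}$.

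Concretely, I would proceed as follows. First, recall the differentiation formula for Jacobi polynomials (cf., e.g., \cite{koekoek2010hypergeometric}),
\[
\frac{d}{dx}P_n^{\alpha,\alpha}(x)=\frac{n+2\alpha+1}{2}\,P_{n-1}^{\alpha+1,\alpha+1}(x),
\]
so that the critical points of $P_n^{\alpha,\alpha}$ in $[-1,1]$ are exactly the zeros of $P_{n-1}^{\alpha+1,\alpha+1}$. Since $\alpha+1>-1$, this Jacobi polynomial of degree $n-1$ has $n-1$ simple zeros, all lying in the open interval $(-1,1)$; writing them together with the endpoints as $-1=:t_0<t_1<\dots<t_{n-1}<t_n:=1$, the function $P_n^{\alpha,\alpha}$ is strictly monotone on each of the $n$ subintervals $[t_{i-1},t_i]$. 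Second, on each such interval,
\[
\mathcal{V}_{t_{i-1}}^{t_i}\!\left[P_n^{\alpha,\alpha}\right]=\left\lvert P_n^{\alpha,\alpha}(t_i)-P_n^{\alpha,\alpha}(t_{i-1})\right\rvert\le 2\max_{x\in[-1,1]}\left\lvert P_n^{\alpha,\alpha}(x)\right\rvert=2\binom{n+\alpha}{n},
\]
where the last equality is Remark \ref{maxjacobi1} (applicable because $\alpha\ge0>-\tfrac12$). Third, by additivity of the total variation over adjacent intervals,
\[
\mathcal{V}\!\left[P_n^{\alpha,\alpha}\right]=\sum_{i=1}^n\mathcal{V}_{t_{i-1}}^{t_i}\!\left[P_n^{\alpha,\alpha}\right]\le 2n\binom{n+\alpha}{n},
\]
which is the assertion; the case $n=0$ is trivial since $P_0^{\alpha,\alpha}\equiv1$.

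I do not expect a real obstacle here. The only point requiring a little care is the claim that $P_n^{\alpha,\alpha}$ has exactly $n-1$ critical points in $[-1,1]$, which rests on the standard fact that a Jacobi polynomial with parameters greater than $-1$ has all its zeros real, simple, and contained in the open interval. Equivalently one could write $\mathcal{V}[P_n^{\alpha,\alpha}]=\int_{-1}^1\lvert (P_n^{\alpha,\alpha})'(x)\rvert\,dx$ and split the integral at the sign changes of the derivative, obtaining the same estimate; the apparently crude step $\lvert f(t_i)-f(t_{i-1})\rvert\le 2\max\lvert f\rvert$ turns out to be exactly sharp enough to deliver the stated bound (indeed it is an equality at $n=1$).
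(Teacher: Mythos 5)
Your proof is correct and follows essentially the same route as the paper: differentiate via $\frac{\mathrm{d}}{\mathrm{d}x}P_n^{\alpha,\alpha}=\frac{2\alpha+n+1}{2}P_{n-1}^{\alpha+1,\alpha+1}$, split $[-1,1]$ at the $n-1$ simple zeros of $P_{n-1}^{\alpha+1,\alpha+1}$ into $n$ monotone subintervals, and bound the variation on each by $2\max_{x\in[-1,1]}\lvert P_n^{\alpha,\alpha}(x)\rvert=2\binom{n+\alpha}{n}$ using Remark \ref{maxjacobi1}. Your explicit treatment of the trivial case $n=0$ and the remark on the equivalent integral formulation are fine additions but do not change the argument.
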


\begin{proof}
Let $\alpha\geq0$. We have for the derivative
\begin{align}
\frac{\mathrm{d}}{\mathrm{d}x}P_n^{\alpha,\alpha}(x)=\frac{2\alpha+n+1}{2}P_{n-1}^{\alpha+1,\alpha+1}(x)
\end{align}
(cf., e.g., \cite[p. 10]{nikiforov1991classical}). The polynomial $P_{n-1}^{\alpha+1,\alpha+1}$ has exactly $n-1$ simple zeros $z_i$, $i=1,\dots,n-1$ in the interval $(-1,1)$ (cf., e.g., \cite[p. 15]{nikiforov1991classical}). Thereby the polynomial $P_n^{\alpha,\alpha}$ has exactly $n-1$ local extrema in the interval $(-1,1)$. So is $P_n^{\alpha,\alpha}$ monotonous on the $n$ subintervals $[-1,z_1]$, $[z_i,z_{i+1}]$, $i=1,\dots,n-2$ and $[z_{n-1},1]$. Let $z_0:=-1$ and let $z_n:=1$. With Remark \ref{maxjacobi1} we have
\begin{align}
\max_{x\in[-1,1]}{\left\lvert P_n^{\alpha,\alpha}(x)\right\rvert}=\binom{n+\alpha}{n}
\end{align}
and therefore we obtain
\begin{align*}
\mathcal{V}\left[P_n^{\alpha,\alpha}\right]&\leq\sum_{i=0}^{n-1}{\mathcal{V}\left[P_n^{\alpha,\alpha}\vert_{[z_i,z_{i+1}]}\right]} \\
&\leq\sum_{i=0}^{n-1}{\left\lvert P_n^{\alpha,\alpha}(z_{i+1})-P_n^{\alpha,\alpha}(z_i)\right\rvert} \\
&\leq\sum_{i=0}^{n-1}{\left\lvert P_n^{\alpha,\alpha}(z_{i+1})\right\rvert+\left\lvert P_n^{\alpha,\alpha}(z_i)\right\rvert} \\
&\leq\sum_{i=0}^{n-1}{2\max_{x\in[-1,1]}{\left\lvert P_n^{\alpha,\alpha}(x)\right\rvert}} \\
&\leq 2n\max_{x\in[-1,1]}{\left\lvert P_n^{\alpha,\alpha}(x)\right\rvert} \\
&\leq 2n\binom{n+\alpha}{n}.
\end{align*}
\end{proof}

\begin{lemma}\label{totvarjacobi2}
For $\alpha>0$ one has
\begin{align}
\mathcal{V}\left[P_n^{\alpha,\alpha} \varrho\right]\leq 2(n+1)\max_{x\in[-1,1]}{\left\lvert P_n^{\alpha,\alpha}(x)\varrho(x)\right\rvert}.
\end{align}
\end{lemma}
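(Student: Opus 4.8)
The plan is to argue as in the proof of Lemma~\ref{totvarjacobi1}: decompose $[-1,1]$ into intervals on which $g:=P_n^{\alpha,\alpha}\varrho$ is monotone, bound the variation of $g$ on each such interval by the oscillation $\max_{x\in[-1,1]}|g(x)|$, and gain an extra factor on the two outermost intervals because $g$ vanishes at $x=\pm1$. To set this up, observe that since $\alpha=\beta$ here, $\varrho(x)=(1-x)^\alpha(1+x)^\alpha=(1-x^2)^\alpha$, so for $\alpha>0$ the function $g$ is continuous on $[-1,1]$ with $g(-1)=g(1)=0$, is $C^1$ on $(-1,1)$, and lies in $\mathcal{BV}[-1,1]$ (a product of the polynomial $P_n^{\alpha,\alpha}$ with $\varrho$, the latter being monotone on $[-1,0]$ and on $[0,1]$). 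The possible non-differentiability of $g$ at $\pm1$ when $0<\alpha<1$ plays no role, since only the critical points in the open interval will be counted.

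Next I would differentiate on $(-1,1)$. Using $\dfrac{\mathrm{d}}{\mathrm{d}x}P_n^{\alpha,\alpha}(x)=\dfrac{2\alpha+n+1}{2}P_{n-1}^{\alpha+1,\alpha+1}(x)$ (already invoked in the proof of Lemma~\ref{totvarjacobi1}) and $\varrho'(x)=-2\alpha x(1-x^2)^{\alpha-1}$, one obtains
\begin{align*}
g'(x)=(1-x^2)^{\alpha-1}\Bigl[\tfrac{2\alpha+n+1}{2}\,P_{n-1}^{\alpha+1,\alpha+1}(x)(1-x^2)-2\alpha x\,P_n^{\alpha,\alpha}(x)\Bigr]=:(1-x^2)^{\alpha-1}B(x),
\end{align*}
where $B$ is a polynomial of degree at most $n+1$. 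Since $(1-x^2)^{\alpha-1}>0$ on $(-1,1)$, the sign of $g'$ there coincides with the sign of $B$, so $g'$ changes sign at most $n+1$ times on $(-1,1)$; hence $g$ has at most $n+1$ local extrema in $(-1,1)$, at points $-1=:t_0<t_1<\dots<t_m<t_{m+1}:=1$ with $m\le n+1$, and $g$ is monotone on each $[t_j,t_{j+1}]$.

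Finally I would sum the pieces:
\begin{align*}
\mathcal{V}\bigl[P_n^{\alpha,\alpha}\varrho\bigr]=\sum_{j=0}^{m}\bigl|g(t_{j+1})-g(t_j)\bigr|.
\end{align*}
Using $g(t_0)=g(-1)=0$ and $g(t_{m+1})=g(1)=0$, the summands with $j=0$ and $j=m$ are each at most $\max_{x\in[-1,1]}|g(x)|$, while each of the remaining $m-1$ summands is at most $2\max_{x\in[-1,1]}|g(x)|$; adding up gives $\mathcal{V}[P_n^{\alpha,\alpha}\varrho]\le 2m\max_{x\in[-1,1]}|g(x)|\le 2(n+1)\max_{x\in[-1,1]}|P_n^{\alpha,\alpha}(x)\varrho(x)|$ (the case $m=0$ being trivial). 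The one point that needs genuine care is exactly this boundary bookkeeping: the naive count yields up to $n+2$ monotonicity intervals and hence only the weaker bound $2(n+2)\max_{x\in[-1,1]}|g(x)|$, whereas the sharper constant $2(n+1)$ is obtained precisely from the vanishing of $g$ at both endpoints; the differentiation and the degree estimate for $B$ are routine.
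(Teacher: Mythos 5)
Your proposal is correct and follows essentially the same route as the paper: differentiate $P_n^{\alpha,\alpha}\varrho$, factor out $(1-x^2)^{\alpha-1}$ to reduce to the sign pattern of a polynomial of degree at most $n+1$, split $[-1,1]$ into the resulting monotonicity intervals, and exploit $\varrho(\pm1)=0$ to absorb the two boundary contributions and obtain the constant $2(n+1)$ rather than $2(n+2)$. The boundary bookkeeping you single out is exactly the step the paper performs as well, so no further changes are needed.
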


\begin{proof}
Let $\alpha>0$. We have for the derivative (cf., e.g., \cite[p. 10]{nikiforov1991classical})
\begin{align*}
&\frac{\mathrm{d}}{\mathrm{d}x}\left[P_n^{\alpha,\alpha} \varrho\right](x)\\
=&\left(1-x^2\right)^\alpha\frac{\mathrm{d}}{\mathrm{d}x}P_n^{\alpha,\alpha}(x)+P_n^{\alpha,\alpha}(x)\frac{\mathrm{d}}{\mathrm{d}x}\left(1-x^2\right)^\alpha \\
=&\left(1-x^2\right)^\alpha\frac{2\alpha+n+1}{2}P_{n-1}^{\alpha+1,\alpha+1}(x)-P_n^{\alpha,\alpha}(x)2x\alpha\left(1-x^2\right)^{\alpha-1} \\
=&\left(1-x^2\right)^{\alpha-1}\left[\frac{2\alpha+n+1}{2}\left(1-x^2\right)P_{n-1}^{\alpha+1,\alpha+1}(x)-2x\alpha P_n^{\alpha,\alpha}(x)\right] \\
=:&\left(1-x^2\right)^{\alpha-1}\tilde{P}(x).
\end{align*}
Since $\tilde{P}\in\mathcal{P}_{n+1}$ is a polynomial of degree $n+1$, it has at most $n+1$ zeros $z_i$, $i=1,\dots,k$, $k\leq n+1$ in the interval $(-1,1)$. Thereby $P_n^{\alpha,\alpha} \varrho$ has at most $k\leq n+1$ local extrema in the interval $(-1,1)$. Then $P_n^{\alpha,\alpha} \varrho$ is monotonous on the $k+1$ subintervals $[-1,z_1]$, $[z_i,z_{i+1}]$, $i=1,\dots,k-1$ and $[z_k,1]$. Let $z_0:=-1$ and let $z_{k+1}:=1$. Then one has
\begin{align*}
\mathcal{V}\left[P_n^{\alpha,\alpha} \varrho\right]&\leq\sum_{i=0}^k{\mathcal{V}\left[P_n^{\alpha,\alpha} \varrho\vert_{[z_i,z_{i+1}]}\right]} \\
&\leq\sum_{i=0}^k{\left\lvert P_n^{\alpha,\alpha}(z_{i+1})\varrho(z_{i+1})-P_n^{\alpha,\alpha}(z_i)\varrho(z_i)\right\rvert} \\
&\leq\sum_{i=0}^k{\left\lvert P_n^{\alpha,\alpha}(z_{i+1})\varrho(z_{i+1})\right\rvert+\left\lvert P_n^{\alpha,\alpha}(z_i)\varrho(z_i)\right\rvert} \\
&\leq\left\lvert P_n^{\alpha,\alpha}(z_0)\varrho(z_0)\right\rvert+\left\lvert P_n^{\alpha,\alpha}(z_{k+1})\varrho(z_{k+1})\right\rvert+\sum_{i=1}^k{2\left\lvert P_n^{\alpha,\alpha}(z_i)\varrho(z_i)\right\rvert} \\
&\leq\left\lvert P_n^{\alpha,\alpha}(-1)\varrho(-1)\right\rvert+\left\lvert P_n^{\alpha,\alpha}(1)\varrho(1)\right\rvert+\sum_{i=1}^k{2\max_{x\in[-1,1]}{\left\lvert P_n^{\alpha,\alpha}(x)\varrho(x)\right\rvert}} \\
&\leq\sum_{i=1}^k{2\max_{x\in[-1,1]}{\left\lvert P_n^{\alpha,\alpha}(x)\varrho(x)\right\rvert}} \\
&\leq 2k\max_{x\in[-1,1]}{\left\lvert P_n^{\alpha,\alpha}(x)\varrho(x)\right\rvert} \\
&\leq 2(n+1)\max_{x\in[-1,1]}{\left\lvert P_n^{\alpha,\alpha}(x)\varrho(x)\right\rvert}.
\end{align*}
\end{proof}

\begin{lemma}\label{totvarjacobi3}
For $\alpha\geq\frac{1}{2}$ one has
\begin{align}\label{totvarjacobi3a}
\max_{x\in[-1,1]}{\left\lvert P_n^{\alpha,\alpha}(x)\varrho(x)\right\rvert}\leq\left\lvert \frac{\Gamma\left(2\alpha+1\right)\Gamma\left(n+\alpha+1\right)\Gamma\left(\frac{n}{2}+\alpha+\frac{1}{2}\right)}{\Gamma\left(\alpha+1\right)\Gamma\left(\alpha+\frac{1}{2}\right)\Gamma\left(n+2\alpha+1\right)\Gamma\left(\frac{n}{2}+1\right)}\right\rvert.
\end{align}
For $\frac{1}{2}\geq\alpha>0$ one has
\begin{align}\label{totvarjacobi3b}
\max_{x\in[-1,1]}{\left\lvert P_n^{\alpha,\alpha}(x)\varrho(x)\right\rvert}\leq\left\lvert \frac{\Gamma\left(\alpha\right)\Gamma\left(2\alpha+1\right)\Gamma\left(n+\alpha+1\right)}{\sqrt{\pi}\Gamma\left(\alpha+\frac{1}{2}\right)\Gamma\left(\alpha+1\right)\Gamma\left(n+2\alpha+1\right)}\right\rvert.
\end{align}
\end{lemma}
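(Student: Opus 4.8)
The plan is to strip off the factors of $\varrho$ that vanish at $x=\pm1$, show that the weighted polynomial $P_n^{\alpha,\alpha}\varrho$ attains its sup-norm at one distinguished interior point whose location is governed by the sign of $2\alpha-1$, and then evaluate the extremal value there. The engine is a Sonin-type first integral of the Jacobi differential equation. Write $y:=P_n^{\alpha,\alpha}$; it solves $(1-x^2)y''-(2\alpha+2)xy'+\lambda y=0$ with $\lambda:=n(n+2\alpha+1)$. Set $v(x):=(1-x^2)y'(x)-2\alpha x\,y(x)$, which is exactly the polynomial $\tilde P$ from the proof of Lemma~\ref{totvarjacobi2}, since $\bigl(P_n^{\alpha,\alpha}\varrho\bigr)'=(1-x^2)^{\alpha-1}v$; in particular the interior zeros of $v$ are precisely the interior critical points of $P_n^{\alpha,\alpha}\varrho$. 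Using the differential equation one computes $v'=-(n+1)(n+2\alpha)\,y$. Now put $T(x):=(1-x^2)^{2\alpha}y(x)^2+\frac{(1-x^2)^{2\alpha-1}}{(n+1)(n+2\alpha)}\,v(x)^2$, so that $\bigl(P_n^{\alpha,\alpha}(x)\varrho(x)\bigr)^2=(1-x^2)^{2\alpha}y(x)^2\le T(x)$; differentiating and using $v'=-(n+1)(n+2\alpha)y$ the mixed term cancels, leaving $T'(x)=\dfrac{-2(2\alpha-1)\,x\,(1-x^2)^{2\alpha-2}}{(n+1)(n+2\alpha)}\,v(x)^2$, so the sign of $T'$ on $(-1,1)$ is that of $-(2\alpha-1)x$.

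First case, $\alpha\ge\tfrac12$. Then $T$ is non-increasing on $[0,1)$ and non-decreasing on $(-1,0]$, so $\sup_{(-1,1)}T=T(0)=y(0)^2+\dfrac{y'(0)^2}{(n+1)(n+2\alpha)}$, hence $\max_{[-1,1]}|P_n^{\alpha,\alpha}\varrho|\le\sqrt{T(0)}$. For even $n$ one has $y'(0)=0$, so the bound is $|P_n^{\alpha,\alpha}(0)|$; substituting the classical closed form for $P_n^{\alpha,\alpha}(0)$ (equivalently the value at $0$ of the corresponding Gegenbauer polynomial) reproduces the right-hand side of \eqref{totvarjacobi3a} \emph{exactly}. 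For odd $n$ one has $y(0)=0$, and using $\tfrac{d}{dx}P_n^{\alpha,\alpha}=\tfrac{2\alpha+n+1}{2}P_{n-1}^{\alpha+1,\alpha+1}$ together with the even-degree value of $P_{n-1}^{\alpha+1,\alpha+1}$ at $0$, the bound $\dfrac{|y'(0)|}{\sqrt{(n+1)(n+2\alpha)}}$ turns into a ratio of $\Gamma$-factors; one checks this equals the right-hand side of \eqref{totvarjacobi3a} when $\alpha=\tfrac12$ and is non-increasing in $\alpha$, the monotonicity reducing to the elementary estimate $\psi(y+\tfrac12)-\psi(y)=\tfrac12\sum_{k\ge0}\frac1{(y+k)(y+k+\frac12)}<\tfrac12\,\psi'(y)\le\dfrac1{2y-1}$ (with $\psi$ the digamma function), which gives \eqref{totvarjacobi3a}.

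Second case, $\tfrac12\ge\alpha>0$. For $\alpha=\tfrac12$ one has $T'\equiv0$, $T$ is constant and the previous paragraph applies verbatim. For $0<\alpha<\tfrac12$ the function $T$ is U-shaped with $T(x)\to+\infty$ as $x\to\pm1$, so $\bigl(P_n^{\alpha,\alpha}\varrho\bigr)^2\le T$ is useless near the endpoints; instead let $\xi\in(0,1)$ be the largest zero of $v$ in $(0,1)$, i.e.\ the outermost interior critical point of $P_n^{\alpha,\alpha}\varrho$. Since $T$ is non-decreasing on $[0,\xi]$ and $v(\xi)=0$, one gets $\bigl(P_n^{\alpha,\alpha}(x)\varrho(x)\bigr)^2\le T(x)\le T(\xi)=(1-\xi^2)^{2\alpha}y(\xi)^2=\bigl(P_n^{\alpha,\alpha}(\xi)\varrho(\xi)\bigr)^2$ for $x\in[0,\xi]$, while on $[\xi,1]$ the function $P_n^{\alpha,\alpha}\varrho$ has no interior critical point, hence is monotone and again bounded by $|P_n^{\alpha,\alpha}(\xi)\varrho(\xi)|$ (using $\varrho(1)=0$). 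By the symmetry $x\mapsto-x$ it follows that $\max_{[-1,1]}|P_n^{\alpha,\alpha}\varrho|=|P_n^{\alpha,\alpha}(\xi)\varrho(\xi)|$, with $\xi$ lying to the right of the largest zero of $P_n^{\alpha,\alpha}$. A Sturm comparison of the trigonometric form of the Jacobi equation with Bessel's equation of order $\alpha$ places that zero, hence $\xi$, within $\mathcal{O}(n^{-2})$ of $x=1$; combining the resulting bound on $(1-\xi^2)^\alpha$ with $|P_n^{\alpha,\alpha}(\xi)|\le\binom{n+\alpha}{n}$ from Remark~\ref{maxjacobi1} and simplifying by Legendre's duplication formula and elementary $\Gamma$-quotient bounds yields the right-hand side of \eqref{totvarjacobi3b}.

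The effortless part is the even-$n$ half of \eqref{totvarjacobi3a}, which comes out as an identity, and the odd-$n$ half is mechanical once the digamma estimate above is isolated. The genuine obstacle is \eqref{totvarjacobi3b}: one must (i) know that the global maximum really sits at the outermost critical point $\xi$ — which the monotonicity of $T$ delivers cleanly — and, more delicately, (ii) calibrate constants, i.e.\ make the localisation of $\xi$ near $x=1$ and the size bound for $|P_n^{\alpha,\alpha}(\xi)|$ sharp enough to land on the stated constant rather than a larger multiple of it. Because at $\alpha=\tfrac12$ the two bounds in Lemma~\ref{totvarjacobi3} coincide and are both tight, essentially no slack is available, so the Bessel/Sturm estimate and the $\Gamma$-function bookkeeping in step (ii) are where the real care is required.
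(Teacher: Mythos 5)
Your route differs genuinely from the paper's: the paper obtains both estimates in a few lines by passing to Gegenbauer polynomials via \eqref{gegenbauer1a} and quoting the classical bounds \eqref{gegenbauer1b} and \eqref{gegenbauer1c}, whereas you attempt a self-contained Sonin-type argument. Your first case, $\alpha\geq\tfrac12$, is essentially sound: the identities $v'=-(n+1)(n+2\alpha)y$ and $T'(x)=\frac{-2(2\alpha-1)x(1-x^2)^{2\alpha-2}}{(n+1)(n+2\alpha)}v(x)^2$ are correct, $T$ peaks at $x=0$, $\sqrt{T(0)}$ equals the right-hand side of \eqref{totvarjacobi3a} exactly for even $n$, and for odd $n$ the needed comparison does reduce to precisely the digamma estimate you quote --- though you must phrase the monotonicity for the \emph{ratio} of $\sqrt{T(0)}$ to the right-hand side of \eqref{totvarjacobi3a} (monotonicity of $\sqrt{T(0)}$ alone proves nothing, since that right-hand side also decreases in $\alpha$). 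In effect you re-prove \eqref{gegenbauer1b} instead of citing it, which is a legitimate, more elementary alternative for this half.

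The case $0<\alpha<\tfrac12$ contains a genuine gap. Part (i) is fine: monotonicity of $T$ on $[0,\xi]$ together with monotonicity of $P_n^{\alpha,\alpha}\varrho$ on $[\xi,1]$ does place the maximum at the outermost critical point $\xi$. But step (ii) cannot be calibrated as you propose. In the Mehler--Heine regime $x=\cos(z/n)$ one has $1-\xi\sim j_{\alpha-1,1}^2/(2n^2)$, where $j_{\alpha-1,1}$ is the first positive zero of the Bessel function $J_{\alpha-1}$ (any correct localisation must give at least this, since $\xi$ really sits there), so your bound $(1-\xi^2)^\alpha\binom{n+\alpha}{n}$ is asymptotically $\frac{j_{\alpha-1,1}^{2\alpha}}{\Gamma(\alpha+1)}\,n^{-\alpha}$, while by Legendre duplication the right-hand side of \eqref{totvarjacobi3b} is $\sim\frac{2^{2\alpha}\Gamma(\alpha)}{\pi}\,n^{-\alpha}$. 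The quotient of the two constants is $\frac{\pi}{\alpha\Gamma(\alpha)^2}\left(j_{\alpha-1,1}/2\right)^{2\alpha}$, which equals $\pi/2>1$ at $\alpha=\tfrac12$ (where $j_{-1/2,1}=\pi/2$) and hence exceeds $1$ for all $\alpha$ in a neighbourhood below $\tfrac12$: for such $\alpha$ and large $n$ your bound is strictly larger than the one claimed. There is no slack to absorb this loss, because at $\alpha=\tfrac12$ the right-hand side of \eqref{totvarjacobi3b} equals $\Gamma(n+\tfrac32)/\bigl(\Gamma(\tfrac32)(n+1)!\bigr)$ and is \emph{attained} by $\lvert P_n^{1/2,1/2}(x)(1-x^2)^{1/2}\rvert$ (Chebyshev $U_n$). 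The loss is structural: at $\xi$, which lies next to the largest zero, $\lvert P_n^{\alpha,\alpha}(\xi)\rvert$ is smaller than the global maximum $\binom{n+\alpha}{n}$ of Remark~\ref{maxjacobi1} by a factor bounded away from $1$, and discarding that factor is exactly what puts the stated constant out of reach. So for $0<\alpha<\tfrac12$ you need either the classical inequality \eqref{gegenbauer1c} that the paper quotes, or a Sonin function adapted to the weighted polynomial in the trigonometric variable; the crude maximum of the unweighted polynomial will not do.
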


\begin{proof}
We use the Gegenbauer polynomials
\begin{align}\label{gegenbauer1a}
P_n^\lambda(x):=\frac{\Gamma\left(\lambda+\frac{1}{2}\right)\Gamma\left(n+2\lambda\right)}{\Gamma\left(2\lambda\right)\Gamma\left(n+\lambda+\frac{1}{2}\right)}P_n^{\lambda-\frac{1}{2},\lambda-\frac{1}{2}}(x),
\end{align}
(cf., e.g., \cite[p. 80]{3}) and the estimations
\begin{align}\label{gegenbauer1b}
\left[P_n^\lambda(x)\right]^2\leq\left(1-x^2\right)^{1-2\lambda}\left[\frac{\Gamma\left(\frac{n}{2}+\lambda\right)}{\Gamma(\lambda)\Gamma\left(\frac{n}{2}+1\right)}\right]^2,~\mbox{for}~\lambda\geq1,
\end{align}
and
\begin{align}\label{gegenbauer1c}
\left[P_n^\lambda(x)\right]^2\leq\frac{1}{\pi}\left[\frac{\Gamma\left(\lambda-\frac{1}{2}\right)}{\Gamma(\lambda)}\right]^2\left(1-x^2\right)^{1-2\lambda},~\mbox{for}~\frac{1}{2}<\lambda\leq1,
\end{align} 
(cf., e.g., \cite[p. 362-363]{6}). You find further results in \cite{forster1993inequalities}. Let for the moment $\alpha\geq\frac{1}{2}$ and let $\lambda:=\alpha+\frac{1}{2}$, then holds $\lambda\geq1$. Let $x\in[-1,1]$, then we have
\begin{align*}
\left\lvert P_n^{\alpha,\alpha}(x)\varrho(x)\right\rvert&=\left\lvert P_n^{\lambda-\frac{1}{2},\lambda-\frac{1}{2}}(x)\left(1-x^2\right)^{\lambda-\frac{1}{2}}\right\rvert \\
&\overset{(\ref{gegenbauer1a})}{=}\left\lvert \frac{\Gamma\left(2\lambda\right)\Gamma\left(n+\lambda+\frac{1}{2}\right)}{\Gamma\left(\lambda+\frac{1}{2}\right)\Gamma\left(n+2\lambda\right)}P_n^\lambda(x)\left(1-x^2\right)^{\lambda-\frac{1}{2}}\right\rvert \\
&\overset{(\ref{gegenbauer1b})}{\leq}\left\lvert \frac{\Gamma\left(2\lambda\right)\Gamma\left(n+\lambda+\frac{1}{2}\right)}{\Gamma\left(\lambda+\frac{1}{2}\right)\Gamma\left(n+2\lambda\right)}\frac{\Gamma\left(\frac{n}{2}+\lambda\right)}{\Gamma(\lambda)\Gamma\left(\frac{n}{2}+1\right)}\right\rvert \\
&\leq\left\lvert \frac{\Gamma\left(2\alpha+1\right)\Gamma\left(n+\alpha+1\right)}{\Gamma\left(\alpha+1\right)\Gamma\left(n+2\alpha+1\right)}\frac{\Gamma\left(\frac{n}{2}+\alpha+\frac{1}{2}\right)}{\Gamma\left(\alpha+\frac{1}{2}\right)\Gamma\left(\frac{n}{2}+1\right)}\right\rvert.
\end{align*}
The right-hand side is independent of $x$, therefore one has (\ref{totvarjacobi3a}).\\
Now let $\frac{1}{2}\geq\alpha>0$ and let $\lambda:=\alpha+\frac{1}{2}$, then holds $\frac{1}{2}<\lambda\leq1$. Let $x\in[-1,1]$, then we have
\begin{align*}
\left\lvert P_n^{\alpha,\alpha}(x)\varrho(x)\right\rvert&=\left\lvert P_n^{\lambda-\frac{1}{2},\lambda-\frac{1}{2}}(x)\left(1-x^2\right)^{\lambda-\frac{1}{2}}\right\rvert \\
&\overset{(\ref{gegenbauer1a})}{=}\left\lvert \frac{\Gamma\left(2\lambda\right)\Gamma\left(n+\lambda+\frac{1}{2}\right)}{\Gamma\left(\lambda+\frac{1}{2}\right)\Gamma\left(n+2\lambda\right)}P_n^\lambda(x)\left(1-x^2\right)^{\lambda-\frac{1}{2}}\right\rvert \\
&\overset{(\ref{gegenbauer1c})}{\leq}\left\lvert \frac{\Gamma\left(2\lambda\right)\Gamma\left(n+\lambda+\frac{1}{2}\right)}{\Gamma\left(\lambda+\frac{1}{2}\right)\Gamma\left(n+2\lambda\right)}\frac{1}{\sqrt{\pi}}\frac{\Gamma\left(\lambda-\frac{1}{2}\right)}{\Gamma(\lambda)}\right\rvert \\
&\leq\left\lvert \frac{\Gamma\left(2\alpha+1\right)\Gamma\left(n+\alpha+1\right)}{\Gamma\left(\alpha+1\right)\Gamma\left(n+2\alpha+1\right)}\frac{1}{\sqrt{\pi}}\frac{\Gamma\left(\alpha\right)}{\Gamma\left(\alpha+\frac{1}{2}\right)}\right\rvert.
\end{align*}
The right-hand side is independent of $x$, therefore one has (\ref{totvarjacobi3b}).
\end{proof}

\begin{korollar}\label{totvarjacobi4}
For $\alpha\geq\frac{1}{2}$ we have
\begin{align}\label{totvarjacobi4a}
\max_{x\in[-1,1]}{\left\lvert P_n^{\alpha,\alpha}(x)\varrho(x)\right\rvert}=\mathcal{O}\left(n^{-\frac{1}{2}}\right).
\end{align}
For $\frac{1}{2}\geq\alpha>0$ we have
\begin{align}\label{totvarjacobi4b}
\max_{x\in[-1,1]}{\left\lvert P_n^{\alpha,\alpha}(x)\varrho(x)\right\rvert}=\mathcal{O}\left(n^{-\alpha}\right).
\end{align}
\end{korollar}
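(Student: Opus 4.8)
The plan is to combine the closed-form majorants from Lemma \ref{totvarjacobi3} with the Gamma-ratio asymptotics of Lemma \ref{gamma1}. In both bounds (\ref{totvarjacobi3a}) and (\ref{totvarjacobi3b}) every factor that depends only on $\alpha$ — namely $\Gamma(2\alpha+1)$, $\Gamma(\alpha+1)$, $\Gamma(\alpha+\tfrac12)$, $\Gamma(\alpha)$ and the constant $\sqrt{\pi}$ — is independent of $n$ and may therefore be absorbed into the $\mathcal{O}$-constant; only the ratios of Gamma functions whose arguments grow with $n$ have to be estimated.

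For the case $\alpha\geq\tfrac12$ I would first apply Lemma \ref{gamma1} with $a=\alpha+1$, $b=2\alpha+1$ to obtain $\Gamma(n+\alpha+1)/\Gamma(n+2\alpha+1)=n^{-\alpha}\bigl(1+\mathcal{O}(1/n)\bigr)$, and then apply it once more, with $N$ replaced by $n/2$ and with $a=\alpha+\tfrac12$, $b=1$, to obtain $\Gamma(\tfrac n2+\alpha+\tfrac12)/\Gamma(\tfrac n2+1)=(n/2)^{\alpha-1/2}\bigl(1+\mathcal{O}(1/n)\bigr)$; the fact that $n/2$ is a half-integer for odd $n$ is harmless, since Lemma \ref{gamma1} only requires its arguments to be positive. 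Multiplying these two estimates, the powers of $n$ combine as $n^{-\alpha}\cdot n^{\alpha-1/2}=n^{-1/2}$, which together with (\ref{totvarjacobi3a}) yields (\ref{totvarjacobi4a}). For the case $\tfrac12\geq\alpha>0$ only the first ratio appears in (\ref{totvarjacobi3b}), so Lemma \ref{gamma1} with $a=\alpha+1$, $b=2\alpha+1$ gives at once $\Gamma(n+\alpha+1)/\Gamma(n+2\alpha+1)=\mathcal{O}(n^{-\alpha})$, hence (\ref{totvarjacobi4b}).

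There is essentially no serious obstacle here: the corollary is a routine asymptotic simplification of bounds already established. The only point requiring a little care is the bookkeeping of which Gamma arguments are paired when invoking Lemma \ref{gamma1} — one must pair $\Gamma(n+\alpha+1)$ with $\Gamma(n+2\alpha+1)$, whose arguments differ by the fixed amount $\alpha$, rather than with $\Gamma(\tfrac n2+1)$, since Lemma \ref{gamma1} is an expansion for a large argument with the shifts $a,b$ held fixed and is not applicable when the shift itself grows. Once the correct pairing is made, the exponents of $n$ add up exactly as claimed.
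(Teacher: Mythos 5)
Your proposal is correct and follows essentially the same route as the paper: bound the maximum by Lemma \ref{totvarjacobi3} and then apply the Gamma-ratio asymptotics (\ref{gamma1a}) to the pairs $\Gamma(n+\alpha+1)/\Gamma(n+2\alpha+1)$ and, in the case $\alpha\geq\tfrac12$, additionally to $\Gamma(\tfrac n2+\alpha+\tfrac12)/\Gamma(\tfrac n2+1)$, absorbing all $n$-independent Gamma factors into the implied constant. The exponent bookkeeping $n^{-\alpha}\cdot n^{\alpha-\frac12}=n^{-\frac12}$ and $n^{-\alpha}$ matches the paper's computation exactly.
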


\begin{proof}
Let for the moment $\alpha\geq\frac{1}{2}$. With Lemma \ref{totvarjacobi3} one has
\begin{align*}
\max_{x\in[-1,1]}{\left\lvert P_n^{\alpha,\alpha}(x)\varrho(x)\right\rvert}&\leq\left\lvert \frac{\Gamma\left(2\alpha+1\right)\Gamma\left(n+\alpha+1\right)\Gamma\left(\frac{n}{2}+\alpha+\frac{1}{2}\right)}{\Gamma\left(\alpha+1\right)\Gamma\left(\alpha+\frac{1}{2}\right)\Gamma\left(n+2\alpha+1\right)\Gamma\left(\frac{n}{2}+1\right)}\right\rvert \\
&\overset{(\ref{gamma1a})}{=}\mathcal{O}\left(n^{\alpha+1-(2\alpha+1)}\right)\mathcal{O}\left(\left(\frac{n}{2}\right)^{\alpha+\frac{1}{2}-1}\right) \\
&=\mathcal{O}\left(n^{-\frac{1}{2}}\right).
\end{align*}
Now let $\frac{1}{2}\geq\alpha>0$. With Lemma \ref{totvarjacobi3} one has
\begin{align*}
\max_{x\in[-1,1]}{\left\lvert P_n^{\alpha,\alpha}(x)\varrho(x)\right\rvert}&\leq\left\lvert \frac{\Gamma\left(\alpha\right)\Gamma\left(2\alpha+1\right)\Gamma\left(n+\alpha+1\right)}{\sqrt{\pi}\Gamma\left(\alpha+\frac{1}{2}\right)\Gamma\left(\alpha+1\right)\Gamma\left(n+2\alpha+1\right)}\right\rvert \\
&\overset{(\ref{gamma1a})}{=}\mathcal{O}\left(n^{\alpha+1-(2\alpha+1)}\right) \\
&=\mathcal{O}\left(n^{-\alpha}\right).
\end{align*}
\end{proof}

\begin{korollar}\label{totvarjacobi5}
For $\alpha\geq\frac{1}{2}$ one has
\begin{align}\label{totvarjacobi5a}
\mathcal{V}\left[P_n^{\alpha,\alpha} \varrho\right]=\mathcal{O}\left(n^{\frac{1}{2}}\right).
\end{align}
For $\frac{1}{2}\geq\alpha>0$ one has
\begin{align}\label{totvarjacobi5b}
\mathcal{V}\left[P_n^{\alpha,\alpha} \varrho\right]=\mathcal{O}\left(n^{1-\alpha}\right).
\end{align}
\end{korollar}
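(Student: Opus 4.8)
The plan is simply to chain the two preceding results: Lemma \ref{totvarjacobi2} bounds the total variation of $P_n^{\alpha,\alpha}\varrho$ by $2(n+1)$ times its sup-norm on $[-1,1]$, and Corollary \ref{totvarjacobi4} controls that sup-norm asymptotically in $n$, with a case split at $\alpha=\tfrac12$. So I would carry out the argument in two parallel branches according to the size of $\alpha$.

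First, for $\alpha\geq\tfrac12$, apply Lemma \ref{totvarjacobi2} to get
\begin{align*}
\mathcal{V}\left[P_n^{\alpha,\alpha}\varrho\right]\leq 2(n+1)\max_{x\in[-1,1]}\left\lvert P_n^{\alpha,\alpha}(x)\varrho(x)\right\rvert,
\end{align*}
and then insert the estimate $\max_{x\in[-1,1]}\lvert P_n^{\alpha,\alpha}(x)\varrho(x)\rvert=\mathcal{O}(n^{-1/2})$ from \eqref{totvarjacobi4a}. Since $2(n+1)=\mathcal{O}(n)$, the product is $\mathcal{O}(n\cdot n^{-1/2})=\mathcal{O}(n^{1/2})$, which is \eqref{totvarjacobi5a}.

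Second, for $\tfrac12\geq\alpha>0$, the same application of Lemma \ref{totvarjacobi2} together with $\max_{x\in[-1,1]}\lvert P_n^{\alpha,\alpha}(x)\varrho(x)\rvert=\mathcal{O}(n^{-\alpha})$ from \eqref{totvarjacobi4b} yields $\mathcal{V}[P_n^{\alpha,\alpha}\varrho]=\mathcal{O}(n\cdot n^{-\alpha})=\mathcal{O}(n^{1-\alpha})$, which is \eqref{totvarjacobi5b}. I would remark in passing that the two branches are consistent at the overlap point $\alpha=\tfrac12$, where $n^{1/2}=n^{1-1/2}$. Since Lemma \ref{totvarjacobi2} already covers the full range $\alpha>0$ and Corollary \ref{totvarjacobi4} covers both subcases, nothing further is needed; there is no real obstacle here, as the statement is a direct consequence of results already established. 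The only point requiring a little care is bookkeeping the exponents so that the multiplication by the linear factor $2(n+1)$ is tracked correctly in each regime.
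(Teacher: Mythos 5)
Your proposal is correct and follows exactly the paper's own argument: combine the bound $\mathcal{V}[P_n^{\alpha,\alpha}\varrho]\leq 2(n+1)\max_{x\in[-1,1]}\lvert P_n^{\alpha,\alpha}(x)\varrho(x)\rvert$ from Lemma \ref{totvarjacobi2} with the sup-norm estimates of Corollary \ref{totvarjacobi4} in the two regimes $\alpha\geq\tfrac12$ and $\tfrac12\geq\alpha>0$. Nothing is missing.
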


\begin{proof}
Let for the moment $\alpha\geq\frac{1}{2}$. With Lemma \ref{totvarjacobi2} one has
\begin{align*}
\mathcal{V}\left[P_n^{\alpha,\alpha} \varrho\right]\leq 2(n+1)\max_{x\in[-1,1]}{\left\lvert P_n^{\alpha,\alpha}(x)\varrho(x)\right\rvert}.
\end{align*}
Also holds with Corollary \ref{totvarjacobi4}
\begin{align*}
\max_{x\in[-1,1]}{\left\lvert P_n^{\alpha,\alpha}(x)\varrho(x)\right\rvert}=\mathcal{O}\left(n^{-\frac{1}{2}}\right).
\end{align*}
Then we have (\ref{totvarjacobi5a}).\\
Now let $\frac{1}{2}\geq\alpha>0$. With Lemma \ref{totvarjacobi2} one has
\begin{align*}
\mathcal{V}\left[P_n^{\alpha,\alpha} \varrho\right]\leq 2(n+1)\max_{x\in[-1,1]}{\left\lvert P_n^{\alpha,\alpha}(x)\varrho(x)\right\rvert}.
\end{align*}
Also holds with Corollary \ref{totvarjacobi4}
\begin{align*}
\max_{x\in[-1,1]}{\left\lvert P_n^{\alpha,\alpha}(x)\varrho(x)\right\rvert}=\mathcal{O}\left(n^{-\alpha}\right).
\end{align*}
Then we have (\ref{totvarjacobi5b}).
\end{proof}

\begin{lemma}\label{s6}
For $n\leq\frac{N}{2}$ holds
\begin{align}
\left\lvert 1-\frac{N!N!N}{\Gamma(n+2+N)(N-n)!}\right\rvert\leq\frac{1+2n^2}{N+1}.
\end{align}
\end{lemma}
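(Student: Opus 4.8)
The goal is to bound the quantity
\[
\Delta_N := 1-\frac{N!\,N!\,N}{\Gamma(n+2+N)\,(N-n)!}
\]
for $n\le N/2$, showing $|\Delta_N|\le (1+2n^2)/(N+1)$. The plan is to write the fraction as a product over the $n$ "extra" factors in the two Pochhammer-type quotients and then estimate the product by $1$ plus a small error.

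First I would rewrite the ratio cleanly. Note $\Gamma(n+2+N) = (N+1)!\,(N+2)_n = (N+1)! \prod_{j=1}^{n}(N+1+j)$ and $N! = (N-n)!\,(N-n+1)_n = (N-n)!\prod_{j=1}^{n}(N-n+j)$. Hence
\[
\frac{N!\,N!\,N}{\Gamma(n+2+N)\,(N-n)!}
= \frac{N!\,N}{(N+1)!}\cdot\frac{N!/(N-n)!}{(N+2)_n}
= \frac{N}{N+1}\cdot\prod_{j=1}^{n}\frac{N-n+j}{N+1+j}.
\]
Each factor $\frac{N-n+j}{N+1+j}$ is in $(0,1]$ and can be written as $1 - \frac{n+1}{N+1+j}$. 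So the whole expression is a product of $n+1$ factors each of the form $1-\varepsilon_k$ with $\varepsilon_k\ge 0$, namely $\varepsilon_0 = \frac{1}{N+1}$ and $\varepsilon_j = \frac{n+1}{N+1+j}$ for $j=1,\dots,n$.

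The key step is then the elementary inequality: for reals $\varepsilon_k\in[0,1]$, one has $1 - \sum_k \varepsilon_k \le \prod_k (1-\varepsilon_k) \le 1$, so $0 \le 1 - \prod_k(1-\varepsilon_k) \le \sum_k \varepsilon_k$. Applying this, $0\le \Delta_N \le \varepsilon_0 + \sum_{j=1}^n \varepsilon_j = \frac{1}{N+1} + (n+1)\sum_{j=1}^n \frac{1}{N+1+j}$. Since each term $\frac{1}{N+1+j}\le \frac{1}{N+1}$, we get $\sum_{j=1}^n \frac{1}{N+1+j}\le \frac{n}{N+1}$, whence $\Delta_N \le \frac{1}{N+1} + \frac{n(n+1)}{N+1} = \frac{1+n^2+n}{N+1}$. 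Finally, using $n\le N/2$ is not even needed for $n\le n^2$ once $n\ge 1$; more simply $n^2+n\le 2n^2$ for $n\ge1$ (and the statement is trivial for $n=0$), giving $\Delta_N \le \frac{1+2n^2}{N+1}$, and since $\Delta_N\ge 0$ the absolute value is controlled. I should double-check the $n=1$ boundary and the trivial $n=0$ case separately so the bound $n^2+n\le 2n^2$ is legitimate.

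The only genuinely delicate point is making sure all the $\varepsilon_k$ genuinely lie in $[0,1]$ so the product inequality applies; for $\varepsilon_j = \frac{n+1}{N+1+j}$ with $j\ge 1$ this needs $n+1\le N+2$, which holds under $n\le N/2$ (indeed under $n\le N$), and the hypothesis $n\le N/2$ guarantees plenty of slack — in particular it ensures the numerators $N-n+j$ are positive so the factorials $(N-n)!$ make sense and all factors are nonnegative. So the role of $n\le N/2$ is mainly to keep the combinatorial quantities well-defined; the quantitative bound itself is a clean telescoping-plus-product estimate. I do not anticipate a real obstacle here beyond bookkeeping; the main thing to get right is the factorization of $\Gamma(n+2+N)$ and $N!$ into the matching products so that the cancellation is transparent.
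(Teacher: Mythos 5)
Your proof is correct. It starts from the same factorization as the paper: both write the ratio as $\frac{N}{N+1}\prod_{j=1}^{n}\frac{N-n+j}{N+1+j}$ (the paper indexes it as $\prod_{k=0}^{n-1}\frac{N-k}{N+k+2}$) and both observe that the quantity inside the absolute value is nonnegative because every factor lies in $(0,1]$. The difference is in the final estimate. The paper bounds every factor from below by the worst one, $\frac{N-n+1}{N+n+1}\geq\frac{N-2n}{N}$, and then applies Bernoulli's inequality $\left(1-\frac{2n}{N}\right)^n\geq 1-\frac{2n^2}{N}$; this is precisely where the hypothesis $n\leq\frac{N}{2}$ enters, since Bernoulli needs $-\frac{2n}{N}\geq -1$. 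You instead write each factor as $1-\varepsilon_j$ with $\varepsilon_j=\frac{n+1}{N+1+j}$ (and $\varepsilon_0=\frac{1}{N+1}$) and invoke the Weierstrass-type product inequality $\prod_k(1-\varepsilon_k)\geq 1-\sum_k\varepsilon_k$ for $\varepsilon_k\in[0,1]$, i.e.\ the generalized form of Bernoulli, followed by the crude bound $\sum_{j=1}^{n}\frac{1}{N+1+j}\leq\frac{n}{N+1}$. This buys a slightly sharper intermediate bound $\frac{1+n+n^2}{N+1}$ and, more notably, removes any quantitative use of $n\leq\frac{N}{2}$: your argument works for all $n\leq N$, the hypothesis only guaranteeing (with room to spare) that $\varepsilon_j\leq 1$ and that the combinatorial quantities are defined. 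Like the paper, you need $n\geq 1$ for the last step $n^2+n\leq 2n^2$ and you handle $n=0$ separately (where the expression equals $\frac{1}{N+1}$ exactly), so no gap remains; just make sure to state or prove the product inequality, since the paper only cites plain Bernoulli.
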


\begin{proof}
$n=0$ is trivial. For any $1\leq n\leq\frac{N}{2}$ we have
\begin{align*}
\left\lvert 1-\frac{N!N!N}{\Gamma(n+2+N)(N-n)!}\right\rvert&=\left\lvert 1-\frac{N}{N+1}\frac{N!(N+1)!}{(n+1+N)!(N-n)!}\right\rvert\\
&=\left\lvert 1-\frac{N}{N+1}\prod_{k=0}^{n-1}{\frac{N-k}{N+k+2}}\right\rvert\\
&=1-\frac{N}{N+1}\prod_{k=0}^{n-1}{\frac{N-k}{N+k+2}}\\
&\leq 1-\frac{N}{N+1}\prod_{k=0}^{n-1}{\frac{N-n+1}{N+n+1}}\\
&=1-\frac{N}{N+1}\left(\frac{N-n+1}{N+n+1}\right)^n\\
&\leq 1-\frac{N}{N+1}\left(\frac{N-2n}{N}\right)^n\\
&=1-\frac{N}{N+1}\left(1+\frac{-2n}{N}\right)^n.
\end{align*}
We now use the Bernoulli Inequality
\begin{align}
\left(1+x\right)^m\geq 1+mx,
\end{align}
for all $m\in\mathbb{N}$ and $x\geq -1$ (cf., e.g., \cite[p. 62]{7}),
\begin{align*}
\left\lvert 1-\frac{N!N!N}{\Gamma(n+2+N)(N-n)!}\right\rvert&\leq 1-\frac{N}{N+1}\left(1+n\frac{-2n}{N}\right)\\
&=\frac{1}{N+1}+2\frac{n^2}{N+1}.
\end{align*}
\end{proof}

\begin{korollar}\label{s6a}
For $n\leq\frac{N}{2}$ holds
\begin{align}\label{s6aa}
\frac{N!N!N}{\Gamma(n+2+N)(N-n)!}=1+\mathcal{O}\left(\frac{n^2}{N}\right).
\end{align}
\end{korollar}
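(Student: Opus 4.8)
The plan is to read this off directly from Lemma \ref{s6}, which is exactly the quantitative form of the statement. Lemma \ref{s6} already gives, for every $n \leq \frac{N}{2}$, the explicit bound
\begin{align*}
\left\lvert 1 - \frac{N!N!N}{\Gamma(n+2+N)(N-n)!}\right\rvert \leq \frac{1+2n^2}{N+1},
\end{align*}
so the only remaining task is to recognize the asymptotic order of the right-hand side. First I would handle the trivial case $n=0$ separately: there the quotient equals $\frac{N}{N+1}$, so the difference is $\frac{1}{N+1}$, which is harmless (alternatively one simply restricts to $n\geq 1$, the only regime relevant for the applications, since in all later uses $n\to\infty$). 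For $n\geq 1$ I would use the crude estimates $1+2n^2 \leq 3n^2$ and $N+1 \geq N$ to conclude
\begin{align*}
\left\lvert 1 - \frac{N!N!N}{\Gamma(n+2+N)(N-n)!}\right\rvert \leq \frac{3n^2}{N},
\end{align*}
which is precisely the claim $\frac{N!N!N}{\Gamma(n+2+N)(N-n)!} = 1 + \mathcal{O}\left(\frac{n^2}{N}\right)$, with an absolute implied constant, uniformly over all pairs $(n,N)$ with $1\leq n\leq \frac{N}{2}$.

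I do not expect any real obstacle here: the Corollary is nothing more than a reformulation of Lemma \ref{s6} in $\mathcal{O}$-notation, and all the substantive work — rewriting $\Gamma(n+2+N) = (n+1+N)!$, telescoping the product $\prod_{k=0}^{n-1}\frac{N-k}{N+k+2}$, and applying Bernoulli's inequality — has already been carried out there. The one point worth keeping in mind is the intended meaning of the big-$\mathcal{O}$: it must be uniform in $n$ throughout the range $n\leq N/2$ (not merely an asymptotic statement for $n$ fixed and $N\to\infty$), which is guaranteed because the bound from Lemma \ref{s6} is itself uniform in that range.
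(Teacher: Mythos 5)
Your proposal is correct and follows the same route as the paper: the Corollary is obtained by quoting Lemma \ref{s6} and reading off that $\frac{1+2n^2}{N+1}=\mathcal{O}\left(\frac{n^2}{N}\right)$, uniformly for $n\leq\frac{N}{2}$. Your extra remark about the $n=0$ case and the explicit constant $3$ is a harmless refinement of what the paper does implicitly.
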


\begin{proof}
With Lemma \ref{s6} holds for any $n\leq\frac{N}{2}$
\begin{align*}
\left\lvert 1-\frac{N!N!N}{\Gamma(n+2+N)(N-n)!}\right\rvert\leq\frac{1+2n^2}{N+1}=\mathcal{O}\left(\frac{n^2}{N}\right).
\end{align*}
\end{proof}

\begin{lemma}\label{bvprod}
For each $f,g\in\mathcal{BV}\left[-1,1\right]$ we have
\begin{align}\label{bvproda}
\mathcal{V}\left[f g\right]\leq\max_{x\in[-1,1]}{\lvert f(x)\rvert}\mathcal{V}\left[g\right]+\max_{x\in[-1,1]}{\lvert g(x)\rvert}\mathcal{V}\left[f\right].
\end{align}
\end{lemma}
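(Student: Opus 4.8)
The plan is to argue straight from the definition of the total variation via partitions, using the elementary finite-difference product rule. First I would fix an arbitrary partition $P=\{x_0,\dots,x_m\}\in\mathcal{P}$ of $[-1,1]$, so $-1=x_0<x_1<\dots<x_m=1$, and rewrite a single increment of the product as
\[
f(x_{i+1})g(x_{i+1})-f(x_i)g(x_i)=f(x_{i+1})\bigl(g(x_{i+1})-g(x_i)\bigr)+g(x_i)\bigl(f(x_{i+1})-f(x_i)\bigr).
\]
Applying the triangle inequality and the trivial bounds $|f(x_{i+1})|\le\max_{x\in[-1,1]}|f(x)|$ and $|g(x_i)|\le\max_{x\in[-1,1]}|g(x)|$ gives, for each $i$,
\[
\bigl|f(x_{i+1})g(x_{i+1})-f(x_i)g(x_i)\bigr|\le\max_{x\in[-1,1]}|f(x)|\,\bigl|g(x_{i+1})-g(x_i)\bigr|+\max_{x\in[-1,1]}|g(x)|\,\bigl|f(x_{i+1})-f(x_i)\bigr|.
\]

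Second, I would sum this over $i=0,\dots,m-1$. The right-hand side becomes $\max_{x\in[-1,1]}|f(x)|\sum_{i}|g(x_{i+1})-g(x_i)|+\max_{x\in[-1,1]}|g(x)|\sum_{i}|f(x_{i+1})-f(x_i)|$, and by the very definition of the total variation each of the two partition sums is bounded by $\mathcal{V}[g]$ and $\mathcal{V}[f]$ respectively. Hence, for the fixed partition $P$,
\[
\sum_{i=0}^{m-1}\bigl|f(x_{i+1})g(x_{i+1})-f(x_i)g(x_i)\bigr|\le\max_{x\in[-1,1]}|f(x)|\,\mathcal{V}[g]+\max_{x\in[-1,1]}|g(x)|\,\mathcal{V}[f].
\]
Since this holds for every $P\in\mathcal{P}$ and the right-hand side does not depend on $P$, taking the supremum over all partitions on the left yields exactly (\ref{bvproda}).

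There is no real obstacle here; the only point worth a word of justification is that $\max_{x\in[-1,1]}|f(x)|$ and $\max_{x\in[-1,1]}|g(x)|$ are finite, which follows because any function of bounded variation on a compact interval is bounded (indeed $|h(x)|\le|h(-1)|+\mathcal{V}[h]$ for all $x$). With that remark in place the argument is just the telescoping identity above combined with the monotonicity of the supremum.
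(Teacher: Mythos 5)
Your proof is correct and follows essentially the same route as the paper: the same splitting $f(x_{i+1})g(x_{i+1})-f(x_i)g(x_i)=f(x_{i+1})\bigl(g(x_{i+1})-g(x_i)\bigr)+g(x_i)\bigl(f(x_{i+1})-f(x_i)\bigr)$, the triangle inequality with the uniform bounds on $f$ and $g$, summation over the partition, and the supremum over all partitions. Your added remark that functions of bounded variation are bounded is a harmless (and slightly more careful) supplement to what the paper does.
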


\begin{proof}
Let $f,g\in\mathcal{BV}\left[-1,1\right]$. Let $\left\{x_1,\dots,x_K\right\}$ be a segmentation of the interval $[-1,1]$, i. e. we have\\
$-1=x_1<x_2<\dots<x_i<x_{i+1}<\dots<x_{K-1}<x_K=1$. Then one has for each $i\in\left\{1,\dots,K-1\right\}$
\begin{align*}
&\left\lvert f\left(x_{i+1}\right) g\left(x_{i+1}\right)-f\left(x_i\right) g\left(x_i\right)\right\rvert\\
\leq&\left\lvert f\left(x_{i+1}\right) g\left(x_{i+1}\right)-f\left(x_{i+1}\right) g\left(x_i\right)\right\rvert+\left\lvert f\left(x_{i+1}\right) g\left(x_i\right)-f\left(x_i\right) g\left(x_i\right)\right\rvert\\
\leq&\max_{x\in[-1,1]}{\lvert f(x)\rvert}\left\lvert g\left(x_{i+1}\right)-g\left(x_i\right)\right\rvert+\max_{x\in[-1,1]}{\lvert g(x)\rvert}\left\lvert f\left(x_{i+1}\right)-f\left(x_i\right)\right\rvert.
\end{align*}
The summation over $i=1,\dots,K-1$ on the left-hand side and on the right-hand side yield to
\begin{align*}
&\sum_{i=1}^{K-1}{\left\lvert f\left(x_{i+1}\right) g\left(x_{i+1}\right)-f\left(x_i\right) g\left(x_i\right)\right\rvert}\\
\leq&\max_{x\in[-1,1]}{\lvert f(x)\rvert}\sum_{i=1}^{K-1}{\left\lvert g\left(x_{i+1}\right)-g\left(x_i\right)\right\rvert}+\max_{x\in[-1,1]}{\lvert g(x)\rvert}\sum_{i=1}^{K-1}{\left\lvert f\left(x_{i+1}\right)-f\left(x_i\right)\right\rvert}\\
\leq&\max_{x\in[-1,1]}{\lvert f(x)\rvert}\mathcal{V}\left[g\right]+\max_{x\in[-1,1]}{\lvert g(x)\rvert}\mathcal{V}\left[f\right].
\end{align*}
Hence the claim follows, since the segmentation is arbitrarily.
\end{proof}

\begin{lemma}\label{trapezformel}
For each $f\in\mathcal{C}\left[-1,1\right]\cap\mathcal{BV}\left[-1,1\right]$ and each $N\in\N$ holds
\begin{align}\label{trapezformela}
\left\lvert\int_{-1}^1{f(x)\mathrm{d}x}-\frac{2}{N}\sum_{i=1}^{N-1}{f\left(\frac{2i}{N}-1\right)-\frac{1}{N}f(-1)-\frac{1}{N}f(1)}\right\rvert\leq\frac{1}{N}\mathcal{V}\left[f\right]
\end{align}
\end{lemma}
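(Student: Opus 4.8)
The plan is to recognize the quantity subtracted from $\int_{-1}^1 f$ as the composite trapezoidal rule on the uniform partition and to estimate its error subinterval by subinterval, exploiting finite additivity of the total variation.

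First I would set $h:=2/N$ and $x_i:=-1+2i/N$ for $i=0,\dots,N$, so that $[-1,1]$ is decomposed into the $N$ subintervals $[x_i,x_{i+1}]$, each of length $h$, and $f(2i/N-1)=f(x_i)$. A direct rearrangement of the telescoping-type sum shows
\[
\frac{2}{N}\sum_{i=1}^{N-1}f(x_i)+\frac{1}{N}f(-1)+\frac{1}{N}f(1)=\sum_{i=0}^{N-1}\frac{h}{2}\bigl(f(x_i)+f(x_{i+1})\bigr),
\]
which is exactly the composite trapezoidal approximation of $\int_{-1}^1 f$. Hence the left-hand side of (\ref{trapezformela}) equals $\bigl|\sum_{i=0}^{N-1}E_i\bigr|$, where $E_i:=\int_{x_i}^{x_{i+1}}f(x)\,\mathrm{d}x-\tfrac{h}{2}(f(x_i)+f(x_{i+1}))$, and it suffices to bound $\sum_{i=0}^{N-1}\lvert E_i\rvert$. (The continuity of $f$ guarantees that every integral below exists.)

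Next, for a single subinterval I would write the local error as $E_i=\int_{x_i}^{x_{i+1}}\bigl(f(x)-\tfrac12(f(x_i)+f(x_{i+1}))\bigr)\,\mathrm{d}x$, using that the constant $\tfrac12(f(x_i)+f(x_{i+1}))$ integrates to $\tfrac h2(f(x_i)+f(x_{i+1}))$ over an interval of length $h$. For $x\in[x_i,x_{i+1}]$ the triangle inequality gives $\bigl\lvert f(x)-\tfrac12(f(x_i)+f(x_{i+1}))\bigr\rvert\le\tfrac12\lvert f(x)-f(x_i)\rvert+\tfrac12\lvert f(x)-f(x_{i+1})\rvert$, and each of these two terms is at most $\tfrac12\mathcal{V}_{x_i}^{x}[f]$ respectively $\tfrac12\mathcal{V}_{x}^{x_{i+1}}[f]$, directly from the definition of total variation applied to the two-point partitions $\{x_i,x\}$ and $\{x,x_{i+1}\}$. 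Since $\mathcal{V}_{x_i}^{x}[f]+\mathcal{V}_{x}^{x_{i+1}}[f]=\mathcal{V}_{x_i}^{x_{i+1}}[f]$, the integrand is at most $\tfrac12\mathcal{V}_{x_i}^{x_{i+1}}[f]$ uniformly in $x$, and integrating over $[x_i,x_{i+1}]$ yields $\lvert E_i\rvert\le\tfrac{h}{2}\mathcal{V}_{x_i}^{x_{i+1}}[f]$.

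Finally I would sum over $i$ and use additivity of the total variation over adjacent subintervals, $\sum_{i=0}^{N-1}\mathcal{V}_{x_i}^{x_{i+1}}[f]=\mathcal{V}_{-1}^{1}[f]=\mathcal{V}[f]$, to obtain $\sum_{i=0}^{N-1}\lvert E_i\rvert\le\tfrac{h}{2}\mathcal{V}[f]=\tfrac{1}{N}\mathcal{V}[f]$, which is the claim. There is no serious obstacle here: the only points requiring (elementary) care are the bound $\lvert f(x)-f(y)\rvert\le\mathcal{V}_x^y[f]$ straight from the definition and the finite additivity of total variation, and the constant comes out sharp precisely because the per-subinterval factor $h/2$ equals $1/N$.
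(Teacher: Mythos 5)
Your proof is correct, and it is in fact more than the paper provides: the paper disposes of this lemma with a one-line citation to a standard reference on numerical integration (the trapezoidal-rule error bound for functions of bounded variation, \cite[p.~218, (7.14)]{8}), whereas you give a self-contained elementary argument. Your route is the natural one behind that cited result: identifying the subtracted expression as the composite trapezoidal sum $\sum_{i=0}^{N-1}\tfrac{h}{2}\bigl(f(x_i)+f(x_{i+1})\bigr)$ with $h=2/N$, bounding each local error via $\bigl\lvert f(x)-\tfrac12(f(x_i)+f(x_{i+1}))\bigr\rvert\le\tfrac12\mathcal{V}_{x_i}^{x}[f]+\tfrac12\mathcal{V}_{x}^{x_{i+1}}[f]=\tfrac12\mathcal{V}_{x_i}^{x_{i+1}}[f]$, and then summing with the additivity of the variation over adjacent subintervals, which reproduces exactly the constant $\tfrac{h}{2}=\tfrac1N$ of the lemma. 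All the ingredients you invoke (the two-point-partition bound $\lvert f(x)-f(y)\rvert\le\mathcal{V}_x^y[f]$, finite additivity of $\mathcal{V}$, existence of the integrals by continuity) are standard and used correctly, so the argument is complete; the only difference from the paper is that you prove what the authors merely quote, which makes the presentation self-contained at the cost of a page of routine estimates.
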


\begin{proof}
This result follows directly from \cite[p. 218, (7.14)]{8}.
\end{proof}

We now consider the case $\alpha=\beta=0$ and for the sake of brevity we introduce $P_n\equiv P_n^{0,0}$ and $Q_n(x)\equiv Q_n\left(x,0,0,N\right)$. Then are the weight-functions identical $1$, i. e. $\varrho\equiv 1$ and $\omega\equiv 1$ and we write briefly $(.,.)$ and $\left<.,.\right>$ for the inner products $(.,.)_\varrho$ and $\left<.,.\right>_\omega$. Moreover in the following we write briefly $\left<f,g\right>$ for inner products $\left<f\left(\frac{2}{N}(.)-1\right),g\left(\frac{2}{N}(.)-1\right)\right>$ between two functions $f\colon[-1,1]\rightarrow\R$ and $g\colon[-1,1]\rightarrow\R$.\\
The next Lemmatas \ref{s7} - \ref{s9} are preliminary calculations for the subsequent Theorem \ref{s10}. This Theorem is a special case of our main result Theorem \ref{s16} with the parameter $\alpha=0$.\\
Let in the following
\begin{align}
A_{n,N}:=\frac{(-1)^nN!N!2}{\Gamma(n+2+N)(N-n)!}.
\end{align}

\begin{lemma}\label{s7}
Let for $N\in\N$
\begin{align}
n(N):=\frac{1}{2}+\frac{1}{2}\sqrt{(2N+1)}.
\end{align}
For each $u\in\mathcal{BV}\left[-1,1\right]$ one has
\begin{align}\label{s7a}
\left\lvert\frac{2}{N}\left<u,P_n\right>Q_n\left(\frac{N}{2}(1+x)\right)-\left<u,Q_n\right>Q_n\left(\frac{N}{2}(1+x)\right)A_{n,N}\right\rvert=\mathcal{O}\left(\frac{n^2}{N}\right),
\end{align}
with $n\leq n(N)$, for each $x\in[-1,1]$.
\end{lemma}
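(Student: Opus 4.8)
The plan is to control the difference between the two weighted inner products $\tfrac{2}{N}\langle u,P_n\rangle$ and $\langle u,Q_n\rangle A_{n,N}$, and then multiply by $Q_n\!\left(\tfrac N2(1+x)\right)$, which by Lemma~\ref{s1} is bounded by $1$ in absolute value for $n\le n(N)$. So it suffices to show
\begin{align}\label{plan1}
\left\lvert\frac{2}{N}\langle u,P_n\rangle-\langle u,Q_n\rangle A_{n,N}\right\rvert=\mathcal{O}\!\left(\frac{n^2}{N}\right)
\end{align}
uniformly for $u\in\mathcal{BV}[-1,1]$ with, say, $\mathcal V[u]$ bounded, and then \eqref{s7a} follows at once. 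The first step is to rewrite both quantities as integrals or Riemann-type sums against $u$. Recall that $\langle u,Q_n\rangle$ here abbreviates $\sum_{i=0}^N u\!\left(\tfrac{2i}{N}-1\right)Q_n\!\left(\tfrac N2\bigl(1+(\tfrac{2i}{N}-1)\bigr)\right)=\sum_{i=0}^N u\!\left(\tfrac{2i}{N}-1\right)Q_n(i)$. Using the trapezoidal comparison Lemma~\ref{trapezformel} applied to the function $x\mapsto u(x)\,\tilde Q_n(x)$ (where $\tilde Q_n(x)=(-1)^nQ_n(\tfrac N2(1+x))$, so that on the grid $\tilde Q_n(\tfrac{2i}{N}-1)=(-1)^nQ_n(i)$), I can replace the discrete sum by $\int_{-1}^1 u(x)\tilde Q_n(x)\,\mathrm dx$ up to an error $\tfrac1N\mathcal V[u\tilde Q_n]$. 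By Lemma~\ref{bvprod} this error is $\mathcal O\!\left(\tfrac1N\right)\bigl(\max|u|\,\mathcal V[\tilde Q_n]+\mathcal V[u]\max|\tilde Q_n|\bigr)$, and since $\max|\tilde Q_n|\le1$ and $\mathcal V[\tilde Q_n]=\mathcal O(n)$ (the polynomial $\tilde Q_n$ has at most $n$ monotone pieces, each contributing at most $2$ to the variation), this is $\mathcal O(n/N)$, which is already within the claimed bound.

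Next I would handle the main term. After the trapezoidal replacement, $\langle u,Q_n\rangle A_{n,N}$ becomes $A_{n,N}(-1)^n\int_{-1}^1 u(x)\,\tilde Q_n(x)\,\mathrm dx+\mathcal O(n/N)$, and the key is that $A_{n,N}(-1)^n=\tfrac{2}{N}\cdot\tfrac{N!N!N}{\Gamma(n+2+N)(N-n)!}$, which by Corollary~\ref{s6a} equals $\tfrac2N\bigl(1+\mathcal O(n^2/N)\bigr)$ for $n\le N/2$. So $\langle u,Q_n\rangle A_{n,N}=\tfrac2N\bigl(1+\mathcal O(n^2/N)\bigr)\int_{-1}^1 u(x)\tilde Q_n(x)\,\mathrm dx+\mathcal O(n/N)$. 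Meanwhile $\tfrac2N\langle u,P_n\rangle=\tfrac2N\int_{-1}^1 u(x)P_n(x)\,\mathrm dx$ exactly (in the $\alpha=0$ case the notation $\langle u,P_n\rangle$ with the convention introduced just before Lemma~\ref{s7} is the discrete sum, so here too I first apply Lemma~\ref{trapezformel} to $u P_n$ to pass to the integral $\int u P_n$ up to $\mathcal O(n/N)$, using $\mathcal V[P_n]=\mathcal O(n)$ from Lemma~\ref{totvarjacobi1} and $\max|P_n|=1$). Now both terms carry the integral against $u$, one against $\tilde Q_n$ and one against $P_n$, with the same leading constant $\tfrac2N$.

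The remaining difference is $\tfrac2N\int_{-1}^1 u(x)\bigl(P_n(x)-\tilde Q_n(x)\bigr)\,\mathrm dx$ plus the $\mathcal O(n^2/N)\cdot\tfrac2N\int u\tilde Q_n$ term. For the latter, $\bigl|\tfrac2N\int u\tilde Q_n\bigr|\le\tfrac4N\max|u|$ (since $|\tilde Q_n|\le1$), so that contribution is $\mathcal O(n^2/N^2)=\mathcal O(n^2/N)$. For the former, Lemma~\ref{s3} (specifically \eqref{s3a} with $\alpha=0$) gives $\tilde Q_n(x)=P_n(x)+\mathcal O(n^2/N)$ uniformly in $x\in[-1,1]$, hence $\bigl|\tfrac2N\int_{-1}^1 u(x)(P_n(x)-\tilde Q_n(x))\,\mathrm dx\bigr|\le\tfrac2N\cdot2\max|u|\cdot\mathcal O(n^2/N)=\mathcal O(n^2/N^2)$. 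Collecting all error contributions — $\mathcal O(n/N)$ from the three applications of Lemma~\ref{trapezformel}, and $\mathcal O(n^2/N^2)$ from the main-term comparison — the total is $\mathcal O(n^2/N)$, which proves \eqref{plan1}, and multiplying by $|Q_n(\tfrac N2(1+x))|\le1$ gives \eqref{s7a}. The main obstacle I anticipate is bookkeeping: making sure the $\tilde Q_n$ versus $Q_n(\tfrac N2(1+\cdot))$ sign and normalization conventions match up on the grid so that Lemma~\ref{trapezformel} is applied to exactly the right function, and checking that the hypothesis $n\le n(N)$ indeed forces $n\le N/2$ (it does, since $n(N)=\tfrac12+\tfrac12\sqrt{2N+1}\le N/2$ for $N\ge2$) so that Corollary~\ref{s6a} and the bound on $\mathcal V[\tilde Q_n]$ apply; everything else is a routine application of the preliminary lemmas.
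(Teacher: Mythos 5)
Your argument reaches the right bound, but by a genuinely different and more roundabout route than the paper. The paper never leaves the discrete level: after pulling out $\left\lvert Q_n\left(\frac N2(1+x)\right)\right\rvert\le 1$ (Lemma~\ref{s1}), it rewrites the bracket as $\frac2N\left<u,P_n-(-1)^nQ_n\right>-\frac2N\left<u,(-1)^nQ_n\right>\left(\frac{N!N!N}{\Gamma(n+2+N)(N-n)!}-1\right)$ and bounds both pieces by $\max\lvert u\rvert\cdot\frac2N\cdot(N+1)\cdot\mathcal O\!\left(\frac{n^2}{N}\right)$, using only the pointwise estimate of Lemma~\ref{s3} on the grid and Corollary~\ref{s6a}; in particular only boundedness of $u$ is needed and no quadrature estimate appears. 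You instead push both discrete sums to integrals via Lemma~\ref{trapezformel} and compare $\int u P_n$ with $\int u\tilde Q_n$; this works and gives the same $\mathcal O\!\left(\frac{n^2}{N}\right)$, but it buys nothing here and costs extra machinery (Lemma~\ref{bvprod}, Lemma~\ref{totvarjacobi1}, and your ad hoc bound $\mathcal V[\tilde Q_n]\le 2n$) that the paper defers to Lemma~\ref{s8}, where the continuous inner product $(u,P_n)$ genuinely enters. Three caveats on your version: (i) Lemma~\ref{trapezformel} is stated for $f\in\mathcal C[-1,1]\cap\mathcal{BV}[-1,1]$, while Lemma~\ref{s7} assumes only $u\in\mathcal{BV}[-1,1]$, so applying it to $uP_n$ and $u\tilde Q_n$ is outside its stated hypotheses (the estimate does extend to merely BV integrands, but you would have to justify that, whereas the paper's discrete argument avoids the issue entirely); (ii) your intermediate formulas carry a spurious factor $\frac2N$ in front of the integrals --- the trapezoidal comparison already converts $\frac2N\left<\cdot,\cdot\right>$ into $\int$, so the residual main-term difference is $\int_{-1}^1 u\left(P_n-\tilde Q_n\right)\mathrm dx=\mathcal O\!\left(\frac{n^2}{N}\right)$, not $\frac2N\int u\left(P_n-\tilde Q_n\right)=\mathcal O\!\left(\frac{n^2}{N^2}\right)$; the total is still $\mathcal O\!\left(\frac{n^2}{N}\right)$, so the conclusion survives, but the bookkeeping as written is off by a factor $\frac N2$; (iii) the discrete inner product weights the endpoints by $\frac2N$ while the trapezoid rule uses $\frac1N$, so two $\mathcal O\!\left(\frac1N\right)$ endpoint corrections should be recorded, as the paper does in the proof of Lemma~\ref{s8}. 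None of this invalidates your proof, but the paper's direct discrete comparison is shorter and requires weaker hypotheses on $u$.
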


\begin{proof}
Let $u\in\mathcal{BV}\left[-1,1\right]$ and let $x\in[-1,1]$. Then we have for any $n\leq n(N)$
\begin{align*}
&\left\lvert\frac{2}{N}\left<u,P_n\right>Q_n\left(\frac{N}{2}(1+x)\right)-\left<u,Q_n\right>Q_n\left(\frac{N}{2}(1+x)\right)A_{n,N}\right\rvert\\
\leq&\left\lvert Q_n\left(\frac{N}{2}(1+x)\right)\right\rvert\left\lvert\frac{2}{N}\left<u,P_n\right>-\left<u,Q_n\right>\frac{(-1)^nN!N!2}{\Gamma(n+2+N)(N-n)!}\right\rvert\\
\overset{(\ref{s1a})}{\leq}&\left\lvert\frac{2}{N}\left<u,P_n\right>-\left<u,Q_n\right>\frac{(-1)^nN!N!2}{\Gamma(n+2+N)(N-n)!}\right\rvert\\
\leq&\frac{2}{N}\left\lvert\left<u,P_n\right>-\left<u,(-1)^nQ_n\right>\frac{N!N!N}{\Gamma(n+2+N)(N-n)!}\right\rvert\\
\leq&\frac{2}{N}\left\lvert\left<u,P_n-(-1)^nQ_n\right>-\left<u,(-1)^nQ_n\right>\left(\frac{N!N!N}{\Gamma(n+2+N)(N-n)!}-1\right)\right\rvert\\
\leq&\max_{x\in[-1,1]}{\lvert u(x)\rvert}\frac{2}{N}\left[\left<1,\left\lvert P_n-(-1)^nQ_n\right\rvert\right>+\left<1,\left\lvert(-1)^nQ_n\right\rvert\right>\left\lvert(-1)^n\frac{N}{2}A_{n,N}-1\right\rvert\right]\\
\overset{(\ref{s6aa})}{\leq}&\max_{x\in[-1,1]}{\lvert u(x)\rvert}\frac{2}{N}\left[\left<1,\left\lvert P_n-(-1)^nQ_n\right\rvert\right>+\left<1,1\right>\mathcal{O}\left(\frac{n^2}{N}\right)\right]\\
\overset{(\ref{s3a})}{\leq}&\max_{x\in[-1,1]}{\lvert u(x)\rvert}\frac{2}{N}\left[\mathcal{O}\left(\frac{n^2}{N}\right)\left<1,1\right>+\left<1,1\right>\mathcal{O}\left(\frac{n^2}{N}\right)\right]\\
\leq&\max_{x\in[-1,1]}{\lvert u(x)\rvert}\frac{2}{N}\left[\mathcal{O}\left(\frac{n^2}{N}\right)(N+1)+(N+1)\mathcal{O}\left(\frac{n^2}{N}\right)\right]\\
\leq&\mathcal{O}\left(\frac{n^2}{N}\right).
\end{align*}
\end{proof}

\begin{lemma}\label{s8}
Let for $N\in\N$
\begin{align}
n(N):=\frac{1}{2}+\frac{1}{2}\sqrt{(2N+1)}.
\end{align}
For each $u\in\mathcal{C}\left[-1,1\right]\cap\mathcal{BV}\left[-1,1\right]$ one has
\begin{align}\label{s8a}
\left\lvert\left(u,P_n\right)P_n(x)-\frac{2}{N}\left<u,P_n\right>(-1)^nQ_n\left(\frac{N}{2}(1+x)\right)\right\rvert=\mathcal{O}\left(\frac{n^2}{N}\right),
\end{align}
with $n\leq n(N)$, for each $x\in[-1,1]$.
\end{lemma}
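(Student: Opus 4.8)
The idea is that $\frac{2}{N}\left<u,P_n\right>$ --- which, unravelling the notational convention introduced before Lemma \ref{s7}, equals $\frac{2}{N}\sum_{i=0}^{N}u(x_i)P_n(x_i)$ with $x_i=\frac{2i}{N}-1$ --- is a quadrature approximation of $(u,P_n)=\int_{-1}^1u(x)P_n(x)\,\mathrm{d}x$, so that its error is governed by $\mathcal{V}[uP_n]$, while the replacement of $P_n(x)$ by $(-1)^nQ_n(\frac N2(1+x))$ costs only $\mathcal{O}(n^2/N)$ by Lemma \ref{s3}. Accordingly I would begin with the triangle-inequality decomposition
\begin{align*}
&\left\lvert\left(u,P_n\right)P_n(x)-\frac{2}{N}\left<u,P_n\right>(-1)^nQ_n\left(\frac{N}{2}(1+x)\right)\right\rvert\\
\leq&\left\lvert P_n(x)\right\rvert\cdot\left\lvert\left(u,P_n\right)-\frac{2}{N}\left<u,P_n\right>\right\rvert
+\frac{2}{N}\left\lvert\left<u,P_n\right>\right\rvert\cdot\left\lvert P_n(x)-(-1)^nQ_n\left(\frac{N}{2}(1+x)\right)\right\rvert
\end{align*}
and estimate the two terms on the right separately; throughout, $\left\lvert P_n(x)\right\rvert\leq\max_{[-1,1]}\lvert P_n\rvert=\binom{n}{n}=1$ by Remark \ref{maxjacobi1} with $\alpha=0$.

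For the first term it is enough to show $\left\lvert(u,P_n)-\frac{2}{N}\left<u,P_n\right>\right\rvert=\mathcal{O}(n/N)$. I would split $\frac{2}{N}\left<u,P_n\right>=\frac{2}{N}\sum_{i=1}^{N-1}u(x_i)P_n(x_i)+\frac{2}{N}u(-1)P_n(-1)+\frac{2}{N}u(1)P_n(1)$, so that the difference between $\frac{2}{N}\left<u,P_n\right>$ and the finite sum occurring in (\ref{trapezformela}) (applied to $f=uP_n$) is carried entirely by the two endpoint contributions and is $\mathcal{O}(1/N)$, since $\lvert P_n(\pm1)\rvert=1$ and $u$ is bounded. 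Lemma \ref{trapezformel} then bounds the distance of that sum from $(u,P_n)$ by $\frac1N\mathcal{V}[uP_n]$, and by Lemma \ref{bvprod} together with Lemma \ref{totvarjacobi1} (which for $\alpha=0$ reads $\mathcal{V}[P_n]\leq 2n$) and $\max\lvert P_n\rvert=1$ one gets $\mathcal{V}[uP_n]\leq 2n\max\lvert u\rvert+\mathcal{V}[u]=\mathcal{O}(n)$. Hence the first term is $\mathcal{O}(n/N)$, which is $\mathcal{O}(n^2/N)$.

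For the second term the crude termwise estimate $\frac{2}{N}\lvert\left<u,P_n\right>\rvert\leq\frac{2}{N}(N+1)\max\lvert u\rvert=\mathcal{O}(1)$ suffices, and equation (\ref{s3a}) of Lemma \ref{s3} with $\alpha=0$ --- applicable because $n\leq n(N)=n(0,N)$ --- gives $\left\lvert P_n(x)-(-1)^nQ_n(\frac N2(1+x))\right\rvert=\mathcal{O}(n^2/N)$ uniformly in $x\in[-1,1]$; so this term is $\mathcal{O}(n^2/N)$ as well, and adding the two bounds completes the proof. I do not anticipate a genuine obstacle: every step is supplied by an earlier lemma. The only place demanding slight care is the bookkeeping of the two endpoint terms when converting $\frac{2}{N}\left<u,P_n\right>$ into the sum of Lemma \ref{trapezformel}, and the (routine) check that all implied $\mathcal{O}$-constants depend on $u$ only through $\max\lvert u\rvert$ and $\mathcal{V}[u]$, hence are uniform over $n\leq n(N)$.
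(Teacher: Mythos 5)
Your proposal is correct and matches the paper's own proof essentially step for step: the same triangle-inequality split via $\frac{2}{N}\left<u,P_n\right>P_n(x)$, the bound $\lvert P_n\rvert\leq 1$ from Remark \ref{maxjacobi1}, the endpoint bookkeeping followed by Lemma \ref{trapezformel}, Lemma \ref{bvprod} and Lemma \ref{totvarjacobi1} to get $\mathcal{V}\left[uP_n\right]=\mathcal{O}(n)$, and (\ref{s3a}) with $\alpha=0$ for the second term. No gaps; nothing further is needed.
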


\begin{proof}
Let $u\in\mathcal{C}\left[-1,1\right]\cap\mathcal{BV}\left[-1,1\right]$ and let $x\in[-1,1]$. Then we have for any $n\leq n(N)$
\begin{align*}
&\left\lvert\left(u,P_n\right)P_n(x)-\frac{2}{N}\left<u,P_n\right>(-1)^nQ_n\left(\frac{N}{2}(1+x)\right)\right\rvert\\
\leq&\left\lvert\left(u,P_n\right)P_n(x)-\frac{2}{N}\left<u,P_n\right>P_n(x)\right\rvert\\
+&\left\lvert\frac{2}{N}\left<u,P_n\right>P_n(x)-\frac{2}{N}\left<u,P_n\right>(-1)^nQ_n\left(\frac{N}{2}(1+x)\right)\right\rvert\\
\overset{(\ref{maxjacobi1a})}{\leq}&\left\lvert\left(u,P_n\right)-\frac{2}{N}\left<u,P_n\right>\right\rvert+\frac{2}{N}\left\lvert\left<u,P_n\right>\right\rvert\left\lvert P_n(x)-(-1)^nQ_n\left(\frac{N}{2}(1+x)\right)\right\rvert\\
\overset{(\ref{s3a})}{\leq}&\left\lvert\left(u,P_n\right)-\frac{2}{N}\sum_{i=0}^N{u\left(\frac{2i}{N}-1\right)P_n\left(\frac{2i}{N}-1\right)}\right\rvert+\frac{2(N+1)}{N}\max_{x\in[-1,1]}{\lvert u(x)\rvert}\mathcal{O}\left(\frac{n^2}{N}\right)\\
\leq&\left\lvert\left(u,P_n\right)-\frac{2}{N}\sum_{i=1}^{N-1}{u\left(\frac{2i}{N}-1\right)P_n\left(\frac{2i}{N}-1\right)-\frac{1}{N}u(-1)P_n(-1)-\frac{1}{N}u(1)P_n(1)}\right\rvert\\
+&\left\lvert\frac{1}{N}u(-1)P_n(-1)\right\rvert+\left\lvert\frac{1}{N}u(1)P_n(1)\right\rvert+\mathcal{O}\left(\frac{n^2}{N}\right).
\end{align*}
With Lemma \ref{trapezformel} follows
\begin{align*}
&\left\lvert\left(u,P_n\right)P_n(x)-\frac{2}{N}\left<u,P_n\right>(-1)^nQ_n\left(\frac{N}{2}(1+x)\right)\right\rvert\\
\overset{(\ref{trapezformela})}{\leq}&\frac{1}{N}\mathcal{V}\left[u P_n\right]+\mathcal{O}\left(\frac{1}{N}\right)+\mathcal{O}\left(\frac{1}{N}\right)+\mathcal{O}\left(\frac{n^2}{N}\right)\\
\overset{(\ref{bvproda})}{\leq}&\frac{1}{N}\left[\max_{x\in[-1,1]}{\lvert u(x)\rvert}\mathcal{V}\left[P_n\right]+\max_{x\in[-1,1]}{\lvert P_n(x)\rvert}\mathcal{V}\left[u\right]\right]+\mathcal{O}\left(\frac{n^2}{N}\right)\\
\overset{(\ref{totvarjacobi1a})}{\leq}&\frac{1}{N}\left[\max_{x\in[-1,1]}{\lvert u(x)\rvert}2n+\mathcal{V}\left[u\right]\right]+\mathcal{O}\left(\frac{n^2}{N}\right)\\
\leq&\mathcal{O}\left(\frac{n}{N}\right)+\mathcal{O}\left(\frac{n^2}{N}\right)\\
\leq&\mathcal{O}\left(\frac{n^2}{N}\right).
\end{align*}
\end{proof}

\begin{lemma}\label{s9}
Let for $N\in\N$
\begin{align}
n(N):=\frac{1}{2}+\frac{1}{2}\sqrt{(2N+1)}.
\end{align}
For each $u\in\mathcal{C}\left[-1,1\right]\cap\mathcal{BV}\left[-1,1\right]$ one has
\begin{align}\label{s9a}
\left\lvert\frac{\left(u,P_n\right)}{\left(P_n,P_n\right)}P_n(x)-\frac{\left<u,Q_n\right>}{\left<Q_n,Q_n\right>}Q_n\left(\frac{N}{2}(1+x)\right)\right\rvert=\mathcal{O}\left(\frac{n^3}{N}\right),
\end{align}
with $n\leq n(N)$, for each $x\in[-1,1]$.
\end{lemma}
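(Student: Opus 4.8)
The plan is to reduce the statement to Lemmas~\ref{s7} and~\ref{s8} after rewriting both terms with a common normalisation. First I would use Lemma~\ref{s4} in the case $\alpha=\beta=0$, which gives $\langle Q_n,Q_n\rangle=\frac{\Gamma(n+N+2)(N-n)!}{2\,N!\,N!}(P_n,P_n)$, so that
\begin{align*}
\frac{1}{\langle Q_n,Q_n\rangle}=\frac{(-1)^n A_{n,N}}{(P_n,P_n)}
\end{align*}
with $A_{n,N}$ as defined before Lemma~\ref{s7}. Consequently
\begin{align*}
\frac{(u,P_n)}{(P_n,P_n)}P_n(x)-\frac{\langle u,Q_n\rangle}{\langle Q_n,Q_n\rangle}Q_n\!\left(\tfrac N2(1+x)\right)
=\frac{1}{(P_n,P_n)}\left[(u,P_n)P_n(x)-(-1)^n A_{n,N}\langle u,Q_n\rangle Q_n\!\left(\tfrac N2(1+x)\right)\right],
\end{align*}
so it suffices to control the bracket and then multiply by $(P_n,P_n)^{-1}=\tfrac{2n+1}{2}$.

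Second, I would insert the intermediate quantity $\tfrac2N\langle u,P_n\rangle(-1)^nQ_n(\tfrac N2(1+x))$ and apply the triangle inequality, so that the absolute value of the bracket is at most
\begin{align*}
\left\lvert(u,P_n)P_n(x)-\tfrac2N\langle u,P_n\rangle(-1)^nQ_n\!\left(\tfrac N2(1+x)\right)\right\rvert
+\left\lvert\tfrac2N\langle u,P_n\rangle Q_n\!\left(\tfrac N2(1+x)\right)-A_{n,N}\langle u,Q_n\rangle Q_n\!\left(\tfrac N2(1+x)\right)\right\rvert,
\end{align*}
where in the second summand I pulled out the common factor $(-1)^n$. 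The first summand is $\mathcal{O}(n^2/N)$ by Lemma~\ref{s8}, and the second summand is precisely the quantity estimated in Lemma~\ref{s7}, hence also $\mathcal{O}(n^2/N)$; both bounds hold uniformly in $x\in[-1,1]$ and under the same restriction $n\leq n(N)$ (which in particular ensures $n\leq N/2$, as used through Corollary~\ref{s6a} inside those lemmas), and both use $u\in\mathcal{C}[-1,1]\cap\mathcal{BV}[-1,1]$. Therefore the bracket is $\mathcal{O}(n^2/N)$, and multiplying by $(P_n,P_n)^{-1}=\tfrac{2n+1}{2}=\mathcal{O}(n)$ yields the claimed $\mathcal{O}(n^3/N)$.

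There is no deep obstacle here: the argument is a bookkeeping combination of the already established Lemmas~\ref{s4},~\ref{s7} and~\ref{s8}. The only point requiring care is matching the three normalising constants $(P_n,P_n)$, $\langle Q_n,Q_n\rangle$ and $A_{n,N}$ so that the factor in front of the bracket comes out exactly as $(P_n,P_n)^{-1}$, and then noticing that this factor costs only one additional power of $n$ --- which is exactly why the rate degrades from the $n^2/N$ of Lemmas~\ref{s7} and~\ref{s8} to $n^3/N$ in the present lemma.
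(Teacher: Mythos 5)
Your proposal is correct and follows essentially the same route as the paper: factor out $\left(P_n,P_n\right)^{-1}$ via Lemma \ref{s4} so the constant becomes $(-1)^nA_{n,N}$, insert the intermediate term $\tfrac{2}{N}\left<u,P_n\right>(-1)^nQ_n\left(\tfrac{N}{2}(1+x)\right)$, bound the two pieces by Lemmas \ref{s8} and \ref{s7}, and pay the extra factor $\tfrac{2n+1}{2}=\mathcal{O}(n)$ from the normalisation (\ref{orthogjacobi1a}). This matches the paper's proof step for step, so nothing further is needed.
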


\begin{proof}
Let $u\in\mathcal{C}\left[-1,1\right]\cap\mathcal{BV}\left[-1,1\right]$ and let $x\in[-1,1]$. Then we have for any $n\leq n(N)$
\begin{align*}
&\left\lvert\frac{\left(u,P_n\right)}{\left(P_n,P_n\right)}P_n(x)-\frac{\left<u,Q_n\right>}{\left<Q_n,Q_n\right>}Q_n\left(\frac{N}{2}(1+x)\right)\right\rvert\\
\overset{(\ref{s4a})}{=}&\frac{1}{\left\lvert\left(P_n,P_n\right)\right\rvert}\left\lvert\left(u,P_n\right)P_n(x)-\left<u,Q_n\right>Q_n\left(\frac{N}{2}(1+x)\right)(-1)^nA_{n,N}\right\rvert\\
\leq&\frac{1}{\left\lvert\left(P_n,P_n\right)\right\rvert}\left\lvert\left(u,P_n\right)P_n(x)-\frac{2}{N}\left<u,P_n\right>(-1)^nQ_n\left(\frac{N}{2}(1+x)\right)\right\rvert\\
+&\frac{1}{\left\lvert\left(P_n,P_n\right)\right\rvert}\left\lvert\frac{2}{N}\left<u,P_n\right>Q_n\left(\frac{N}{2}(1+x)\right)-\left<u,Q_n\right>Q_n\left(\frac{N}{2}(1+x)\right)A_{n,N}\right\rvert\\
\overset{(\ref{s7a})}{\leq}&\frac{1}{\left\lvert\left(P_n,P_n\right)\right\rvert}\left\lvert\left(u,P_n\right)P_n(x)-\frac{2}{N}\left<u,P_n\right>(-1)^nQ_n\left(\frac{N}{2}(1+x)\right)\right\rvert\\
+&\frac{1}{\left\lvert\left(P_n,P_n\right)\right\rvert}\mathcal{O}\left(\frac{n^2}{N}\right)\\
\overset{(\ref{s8a})}{\leq}&\frac{1}{\left\lvert\left(P_n,P_n\right)\right\rvert}\mathcal{O}\left(\frac{n^2}{N}\right)+\frac{1}{\left\lvert\left(P_n,P_n\right)\right\rvert}\mathcal{O}\left(\frac{n^2}{N}\right)\\
\overset{(\ref{orthogjacobi1a})}{\leq}&\frac{2n+1}{2}\mathcal{O}\left(\frac{n^2}{N}\right)\\
\leq&\mathcal{O}\left(\frac{n^3}{N}\right).
\end{align*}
\end{proof}

\begin{theorem}\label{s10}
Let for $N\in\N$
\begin{align}
n(N):=\frac{1}{2}+\frac{1}{2}\sqrt{(2N+1)}.
\end{align}
For each $u\in\mathcal{C}\left[-1,1\right]\cap\mathcal{BV}\left[-1,1\right]$ one has
\begin{align}
\begin{aligned}
&\left\lvert u(x)-\sum_{k=0}^n{\frac{\left<u,Q_k\right>}{\left<Q_k,Q_k\right>}Q_k\left(\frac{N}{2}(1+x)\right)}\right\rvert\\
\leq&\left\lvert u(x)-\sum_{k=0}^n{\frac{\left(u,P_k\right)}{\left(P_k,P_k\right)}P_k(x)}\right\rvert+\mathcal{O}\left(\frac{n^4}{N}\right),
\end{aligned}
\end{align}
with $n\leq n(N)$, for each $x\in[-1,1]$.
\end{theorem}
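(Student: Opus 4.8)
\emph{Plan.} The natural approach is a term-by-term triangle inequality that isolates the Legendre truncation error and leaves a ``comparison'' remainder between the Hahn and Legendre partial sums. Write $S_n^Q[u](x):=\sum_{k=0}^n\frac{\left<u,Q_k\right>}{\left<Q_k,Q_k\right>}Q_k\left(\frac N2(1+x)\right)$ and $S_n^P[u](x):=\sum_{k=0}^n\frac{(u,P_k)}{(P_k,P_k)}P_k(x)$. The first step is the split
\begin{align*}
\left\lvert u(x)-S_n^Q[u](x)\right\rvert\leq\left\lvert u(x)-S_n^P[u](x)\right\rvert+\sum_{k=0}^n\left\lvert\frac{(u,P_k)}{(P_k,P_k)}P_k(x)-\frac{\left<u,Q_k\right>}{\left<Q_k,Q_k\right>}Q_k\left(\tfrac N2(1+x)\right)\right\rvert ,
\end{align*}
so it remains to bound the finite sum on the right by $\mathcal{O}(n^4/N)$, uniformly in $x\in[-1,1]$.

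For this, I would apply Lemma \ref{s9} to each summand. Since $k\leq n\leq n(N)$, the hypothesis $k\leq n(N)$ of that lemma is satisfied for every $0\leq k\leq n$, so each $k$-th term is $\mathcal{O}(k^3/N)$. The point that needs care is the uniformity of the implied constant in $k$ (and in $N$): tracing the $\mathcal{O}$'s back through Lemmas \ref{s7} and \ref{s8}, and further through Corollary \ref{s6a}, Lemma \ref{s3}, Lemma \ref{trapezformel}, Lemma \ref{bvprod} and Lemma \ref{totvarjacobi1}, one sees that the constants depend only on $\max_{x\in[-1,1]}\lvert u(x)\rvert$, on $\mathcal{V}[u]$, and on absolute numbers, but not on $k$ or $N$. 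Hence there is $C=C(u)$ with
\begin{align*}
\left\lvert\frac{(u,P_k)}{(P_k,P_k)}P_k(x)-\frac{\left<u,Q_k\right>}{\left<Q_k,Q_k\right>}Q_k\left(\tfrac N2(1+x)\right)\right\rvert\leq\frac{C\,k^3}{N}
\end{align*}
for all $0\leq k\leq n\leq n(N)$ and all $x\in[-1,1]$.

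Finally, summing over $k$ and using $\sum_{k=0}^n k^3=\frac{n^2(n+1)^2}{4}=\mathcal{O}(n^4)$ shows that the comparison remainder is at most $\frac{C}{N}\sum_{k=0}^n k^3=\mathcal{O}(n^4/N)$; combined with the first step this is exactly the assertion. The only genuine obstacle is the bookkeeping in the previous paragraph — verifying that none of the constants accumulated in Lemmas \ref{s7}--\ref{s9} secretly grows with the degree $k$, so that the per-degree bound $\mathcal{O}(k^3/N)$ is legitimate and the sum really aggregates to $\mathcal{O}(n^4/N)$; the rest is just the triangle inequality and the closed form for $\sum k^3$.
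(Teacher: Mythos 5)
Your argument is exactly the paper's proof: the same triangle-inequality split into the Jacobi truncation error plus the termwise comparison, followed by Lemma \ref{s9} applied to each summand and $\sum_{k=0}^n k^3=\mathcal{O}(n^4)$. Your extra remark on the uniformity in $k$ of the implied constants is a sensible (and correct) piece of bookkeeping that the paper leaves implicit.
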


\begin{proof}
Let $u\in\mathcal{C}\left[-1,1\right]\cap\mathcal{BV}\left[-1,1\right]$ and let $x\in[-1,1]$. Then we have for any $n\leq n(N)$
\begin{align*}
&\left\lvert u(x)-\sum_{k=0}^n{\frac{\left<u,Q_k\right>}{\left<Q_k,Q_k\right>}Q_k\left(\frac{N}{2}(1+x)\right)}\right\rvert\\
\leq&\left\lvert u(x)-\sum_{k=0}^n{\frac{\left(u,P_k\right)}{\left(P_k,P_k\right)}P_k(x)}\right\rvert\\
+&\left\lvert\sum_{k=0}^n{\frac{\left(u,P_k\right)}{\left(P_k,P_k\right)}P_k(x)}-\sum_{k=0}^n{\frac{\left<u,Q_k\right>}{\left<Q_k,Q_k\right>}Q_k\left(\frac{N}{2}(1+x)\right)}\right\rvert\\
\leq&\left\lvert u(x)-\sum_{k=0}^n{\frac{\left(u,P_k\right)}{\left(P_k,P_k\right)}P_k(x)}\right\rvert\\
+&\sum_{k=0}^n{\left\lvert\frac{\left(u,P_k\right)}{\left(P_k,P_k\right)}P_k(x)-\frac{\left<u,Q_k\right>}{\left<Q_k,Q_k\right>}Q_k\left(\frac{N}{2}(1+x)\right)\right\rvert}\\
\overset{(\ref{s9a})}{\leq}&\left\lvert u(x)-\sum_{k=0}^n{\frac{\left(u,P_k\right)}{\left(P_k,P_k\right)}P_k(x)}\right\rvert+\sum_{k=0}^n{\mathcal{O}\left(\frac{k^3}{N}\right)}\\
\leq&\left\lvert u(x)-\sum_{k=0}^n{\frac{\left(u,P_k\right)}{\left(P_k,P_k\right)}P_k(x)}\right\rvert+\mathcal{O}\left(\frac{n^4}{N}\right).
\end{align*}
\end{proof}

In the following we consider the case $\alpha=\beta>0$ and write briefly\\
$P_n\equiv P_n^{\alpha,\alpha}$ and $Q_n(x)\equiv Q_n\left(x,\alpha,\alpha,N\right)$. Also we are using from Remark \ref{rekurhahn2} 
\begin{align}
\tilde{Q}_n(x):=(-1)^n\binom{n+\alpha}{n}Q_n\left(\frac{N}{2}(1+x);\alpha,\alpha,N\right).
\end{align}
The next Lemmatas \ref{s11} - \ref{s14} are preliminary calculations for the subsequent Theorem \ref{s15}. This Theorem is a special case of our main result Theorem \ref{s16} with the parameter $\alpha>0$.

\begin{lemma}\label{s11}
Let $\alpha>0$. For any $n\leq\frac{N}{2}$ holds
\begin{align}\label{s11a}
\frac{N^{2\alpha}N!N!N}{\Gamma(n+2\alpha+2+N)(N-n)!}=1+\mathcal{O}\left(\frac{n^2}{N}\right).
\end{align}
\end{lemma}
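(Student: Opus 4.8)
The plan is to reduce the claim to the already-proved case $\alpha=0$, namely Corollary \ref{s6a}, together with the Gamma-ratio asymptotic of Lemma \ref{gamma1}. First I would split off the $\alpha$-dependence by writing, using $\Gamma(n+2+N)=\Gamma(N+n+2)$,
\[
\frac{N^{2\alpha}N!N!N}{\Gamma(n+2\alpha+2+N)(N-n)!}
= \frac{N!N!N}{\Gamma(n+2+N)(N-n)!}\cdot\frac{N^{2\alpha}\,\Gamma(N+n+2)}{\Gamma(N+n+2\alpha+2)}.
\]
By Corollary \ref{s6a} the first factor equals $1+\mathcal{O}(n^2/N)$ for $n\le N/2$, so it remains to show that the second factor is likewise $1+\mathcal{O}(n^2/N)$; multiplying the two $\mathcal{O}$-expansions then yields the Lemma.

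For the second factor I would apply Lemma \ref{gamma1} with $N$ replaced by $N+n+1$ and with $a=1$, $b=2\alpha+1$ (both positive since $\alpha>0$), obtaining
\[
\frac{\Gamma(N+n+2)}{\Gamma(N+n+2\alpha+2)} = (N+n+1)^{-2\alpha}\Bigl(1+\mathcal{O}\bigl(\tfrac{1}{N+n+1}\bigr)\Bigr) = (N+n+1)^{-2\alpha}\bigl(1+\mathcal{O}(1/N)\bigr).
\]
Hence the second factor equals $\bigl(N/(N+n+1)\bigr)^{2\alpha}\bigl(1+\mathcal{O}(1/N)\bigr)$. Writing $N/(N+n+1)=1-t$ with $t=(n+1)/(N+n+1)$, the constraint $n\le N/2$ forces $t\le(n+1)/N$ and also keeps $t$ bounded away from $1$ (say $t\le 3/4$), so by the mean value theorem $\lvert(1-t)^{2\alpha}-1\rvert\le C_\alpha t\le C_\alpha(n+1)/N$ with a constant $C_\alpha$ depending only on $\alpha$; here one bounds $(1-\xi)^{2\alpha-1}$ for $\xi\in[0,3/4]$, distinguishing the cases $2\alpha\ge1$ and $0<2\alpha<1$. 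Therefore $\bigl(N/(N+n+1)\bigr)^{2\alpha}=1+\mathcal{O}((n+1)/N)=1+\mathcal{O}(n^2/N)$ for $n\ge1$, and combining with the $\bigl(1+\mathcal{O}(1/N)\bigr)$ factor completes the estimate of the second factor; the case $n=0$ is immediate and gives $1+\mathcal{O}(1/N)$, consistent with the convention already used in Corollary \ref{s6a}.

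The only subtlety is \emph{uniformity} in $n$: every $\mathcal{O}$-constant must be independent of $n$ and $N$ throughout the range $n\le N/2$. This is precisely where the hypothesis $n\le N/2$ enters — it keeps $(n+1)/(N+n+1)$ bounded away from $1$ so that the Lipschitz bound for $s\mapsto(1-s)^{2\alpha}$ holds with a constant depending only on $\alpha$, and it is also the range in which Corollary \ref{s6a} is valid. Everything else is routine manipulation of $\mathcal{O}$-terms, so I expect no genuine difficulty beyond this bookkeeping.
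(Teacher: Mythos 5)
Your proposal is correct and follows essentially the same route as the paper: the same factorization into $\frac{N!N!N}{\Gamma(n+2+N)(N-n)!}$ (handled by Corollary \ref{s6a}) times $\frac{N^{2\alpha}\Gamma(N+n+2)}{\Gamma(N+n+2\alpha+2)}$, with Lemma \ref{gamma1} applied at the shifted argument and the remaining power ratio $\bigl(N/(N+n+\mathrm{const})\bigr)^{2\alpha}$ absorbed into $1+\mathcal{O}(n^2/N)$. The only difference is that you spell out, via the mean value theorem and the role of $n\leq N/2$, the uniform bound for that power ratio which the paper simply asserts.
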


\begin{proof}
Let $\alpha>0$. For any $n\leq\frac{N}{2}$ holds
\begin{align*}
\left\lvert 1-\frac{N^{2\alpha}\Gamma(n+2+N)}{\Gamma(n+2\alpha+2+N)}\right\rvert&=1-\frac{N^{2\alpha}\Gamma(n+2+N)}{\Gamma(n+2\alpha+2+N)}\\
&= 1-\frac{N^{2\alpha}}{(n+2+N)^{2\alpha}}\frac{\Gamma(n+2+N)(n+2+N)^{2\alpha}}{\Gamma(n+2\alpha+2+N)}\\
&\overset{(\ref{gamma1a})}{=}1-\left(1+\mathcal{O}\left(\frac{n^2}{N}\right)\right)\left(1+\mathcal{O}\left(\frac{1}{n+N}\right)\right)\\
&=\mathcal{O}\left(\frac{n^2}{N}\right).
\end{align*}
With Lemma \ref{s6} we have for any $n\leq\frac{N}{2}$
\begin{align*}
\frac{N^{2\alpha}N!N!N}{\Gamma(n+2\alpha+2+N)(N-n)!}&=\frac{N^{2\alpha}\Gamma(n+2+N)}{\Gamma(n+2\alpha+2+N)}\frac{N!N!N}{\Gamma(n+2+N)(N-n)!}\\
&=\left(1+\mathcal{O}\left(\frac{n^2}{N}\right)\right)\left(1+\mathcal{O}\left(\frac{n^2}{N}\right)\right)\\
&=1+\mathcal{O}\left(\frac{n^2}{N}\right).
\end{align*}
\end{proof}

Let in the following
\begin{align}
A_{n,N}^\alpha:=\frac{(-1)^nN!N!2^{2\alpha+1}}{\Gamma(n+2\alpha+2+N)(N-n)!}.
\end{align}

\begin{lemma}\label{s12}
Let $\alpha>0$ and let for $N\in\N$
\begin{align}
n(\alpha,N):=\frac{1}{2}-\alpha+\frac{1}{2}\sqrt{(2\alpha+1)(2\alpha+2N+1)}.
\end{align}
For each $u\in\mathcal{BV}\left[-1,1\right]$ holds
\begin{align}\label{s12a}
\begin{aligned}
&\left\lvert Q_n\left(\frac{N}{2}(1+x)\right)\left[\frac{2}{N}\left<u,P_n\right>_\varrho\binom{n+\alpha}{n}-\left<u,Q_n\right>_\omega\frac{\Gamma^2(\alpha+1+n)}{n!n!}A_{n,N}^\alpha\right]\right\rvert\\
=&\mathcal{O}\left(\frac{n^{1+\alpha+\max{\{1,\alpha\}}}}{N}\right),
\end{aligned}
\end{align}
with $n\leq n(\alpha,N)$, for each $x\in[-1,1]$.
\end{lemma}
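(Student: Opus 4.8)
The plan is to follow the template of Lemma~\ref{s7}, replacing each crude estimate there by its $\alpha$-weighted analogue. First I would pull the factor $Q_n\left(\frac{N}{2}(1+x)\right)$ out in front and bound it by $1$ using Lemma~\ref{s1} (legitimate since $n\leq n(\alpha,N)$); this reduces the claim to a uniform-in-$x$ estimate of the bracket
\[
\frac{2}{N}\left<u,P_n\right>_\varrho\binom{n+\alpha}{n}-\left<u,Q_n\right>_\omega\frac{\Gamma^2(\alpha+1+n)}{n!n!}A_{n,N}^\alpha .
\]
Both terms are discrete sums over the grid $x_i=-1+2i/N$: a $\varrho$-weighted sum of $uP_n$ and an $\omega$-weighted sum of $uQ_n$, and the Gamma/binomial prefactors multiplying the second have been chosen precisely so that the two nearly cancel.

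Second, I would rewrite the $Q_n$-term in terms of $\tilde Q_n$. Using $\tilde Q_n(x_i)=(-1)^n\binom{n+\alpha}{n}Q_n(i)$ together with Lemma~\ref{s5} in the form $\omega(i)=\frac{N^{2\alpha}}{2^{2\alpha}\Gamma(\alpha+1)^2}\bigl(1+\mathcal{O}(1/N)\bigr)\varrho(x_i)$, the $n$-dependent binomial/Gamma factors telescope to $\binom{n+\alpha}{n}$ while the $N$-dependent factors combine to exactly $\frac{2}{N}\cdot\frac{N^{2\alpha}N!N!N}{\Gamma(n+2\alpha+2+N)(N-n)!}$, which equals $\frac{2}{N}\bigl(1+\mathcal{O}(n^2/N)\bigr)$ by Lemma~\ref{s11}. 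Hence the $Q_n$-term equals $\binom{n+\alpha}{n}\frac{2}{N}\bigl(1+\mathcal{O}(n^2/N)\bigr)\left<u,\tilde Q_n\right>_\varrho$ with $\left<u,\tilde Q_n\right>_\varrho:=\sum_{i=0}^N u(x_i)\tilde Q_n(x_i)\varrho(x_i)$, modulo the two endpoint summands $i=0,N$: there $\varrho$ vanishes but $\omega$ does not, so Lemma~\ref{s5} does not apply and these must be peeled off and estimated directly; since $\omega(0),\omega(N)=\mathcal{O}(N^\alpha)$, $|Q_n|\leq 1$, and the prefactor on $\left<u,Q_n\right>_\omega$ has size $\mathcal{O}(n^{2\alpha}/N^{2\alpha+1})$, their total contribution is $\mathcal{O}(n^{2\alpha}/N^{\alpha+1})=o\bigl(n^{1+\alpha+\max\{1,\alpha\}}/N\bigr)$.

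Third, the bracket becomes $\binom{n+\alpha}{n}\frac{2}{N}\Bigl[\left<u,P_n-\tilde Q_n\right>_\varrho-\mathcal{O}(n^2/N)\left<u,\tilde Q_n\right>_\varrho\Bigr]$ plus the lower-order terms just mentioned. Here $\binom{n+\alpha}{n}=\mathcal{O}(n^\alpha)$; Lemma~\ref{s3} (combining (\ref{s3a}) and (\ref{s3b})) gives $\max_{[-1,1]}|P_n-\tilde Q_n|=\mathcal{O}(n^{1+\max\{1,\alpha\}}/N)$, and since $0\leq\varrho\leq 1$ the grid sum of $N+1$ such terms is $\mathcal{O}(n^{1+\max\{1,\alpha\}})$, so the first piece is already of order $n^{1+\alpha+\max\{1,\alpha\}}/N$. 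For the cross term I would avoid the crude bound $|\tilde Q_n|\leq\binom{n+\alpha}{n}$ and instead write $\tilde Q_n\varrho=P_n\varrho+(\tilde Q_n-P_n)\varrho$, invoking Corollary~\ref{totvarjacobi4}, $\max_{[-1,1]}|P_n^{\alpha,\alpha}\varrho|=\mathcal{O}(n^{-\min\{1/2,\alpha\}})$, to obtain $\frac{2}{N}\left<u,\tilde Q_n\right>_\varrho=\mathcal{O}\bigl(n^{-\min\{1/2,\alpha\}}+n^{1+\max\{1,\alpha\}}/N\bigr)$; multiplying by $\binom{n+\alpha}{n}\mathcal{O}(n^2/N)$ and using that $n\leq n(\alpha,N)$ forces $n^2/N=\mathcal{O}(1)$, one verifies term by term that everything is again $\mathcal{O}\bigl(n^{1+\alpha+\max\{1,\alpha\}}/N\bigr)$.

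I expect the main obstacle to be keeping the exponent sharp. Bounding $\tilde Q_n$ by its sup-norm $\binom{n+\alpha}{n}=\mathcal{O}(n^\alpha)$ everywhere would overshoot the claimed order by a positive power of $n$; the saving comes precisely from the decay $\max_{[-1,1]}|P_n^{\alpha,\alpha}\varrho|=\mathcal{O}(n^{-\min\{1/2,\alpha\}})$ of Corollary~\ref{totvarjacobi4} combined with the boundedness of $n^2/N$ guaranteed by $n\leq n(\alpha,N)$, and this is what tames the cross term $\binom{n+\alpha}{n}\cdot(n^2/N)\cdot\left<u,\tilde Q_n\right>_\varrho$. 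The two remaining delicate points are purely technical: verifying that the Gamma-function prefactors collapse to exactly $\binom{n+\alpha}{n}\frac{2}{N}$, and showing that the endpoint nodes $i=0,N$, where $\varrho$ degenerates while $\omega$ does not, contribute only a term of lower order than the stated bound.
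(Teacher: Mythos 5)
Your proposal is correct and follows essentially the same route as the paper's proof: bound $Q_n$ by $1$ via Lemma~\ref{s1}, convert $\left<u,Q_n\right>_\omega$ into $\frac{2}{N}\binom{n+\alpha}{n}\left<u,\tilde{Q}_n\right>_\varrho\left(1+\mathcal{O}\left(n^2/N\right)\right)$ via Lemmas~\ref{s5} and~\ref{s11}, then split into the $\left<u,P_n-\tilde{Q}_n\right>_\varrho$ piece controlled by Lemma~\ref{s3} and a cross term tamed by the decay in Corollary~\ref{totvarjacobi4} together with $n^2/N=\mathcal{O}(1)$. Your separate peeling-off of the endpoint nodes, where $\varrho$ vanishes while $\omega$ does not, is extra care the paper silently omits (it applies Lemma~\ref{s5} at all grid points), not a different method.
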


\begin{proof}
Let $u\in\mathcal{BV}\left[-1,1\right]$ and let $x\in[-1,1]$. Then we have for any $n\leq n(\alpha,N)$
\begin{align*}
&\left\lvert Q_n\left(\frac{N}{2}(1+x)\right)\left[\frac{2}{N}\left<u,P_n\right>_\varrho\binom{n+\alpha}{n}-\left<u,Q_n\right>_\omega\frac{\Gamma^2(\alpha+1+n)}{n!n!}A_{n,N}^\alpha\right]\right\rvert\\
\leq&\left\lvert Q_n\left(\frac{N}{2}(1+x)\right)\right\rvert\left\lvert\frac{2}{N}\left<u,P_n\right>_\varrho\binom{n+\alpha}{n}-\left<u,Q_n\right>_\omega\frac{\Gamma^2(\alpha+1+n)}{n!n!}A_{n,N}^\alpha\right\rvert\\
\overset{(\ref{s1a})}{\leq}&\binom{n+\alpha}{n}\left\lvert\frac{2}{N}\left<u,P_n\right>_\varrho-\left<u,Q_n\right>_\omega\frac{\Gamma(\alpha+1+n)\Gamma(\alpha+1)}{n!}A_{n,N}^\alpha\right\rvert\\
\overset{(\ref{s5a})}{\leq}&\binom{n+\alpha}{n}\left\lvert\frac{2}{N}\left<u,P_n\right>_\varrho-\left<u,Q_n\right>_\varrho\frac{N^{2\alpha}\left(1+\mathcal{O}\left(\frac{1}{N}\right)\right)}{2^{2\alpha}\Gamma(\alpha+1)}\frac{\Gamma(\alpha+1+n)}{n!}A_{n,N}^\alpha\right\rvert\\
\leq&\binom{n+\alpha}{n}\left\lvert\frac{2}{N}\left<u,P_n\right>_\varrho-\frac{2}{N}\left<u,\tilde{Q}_n\right>_\varrho\frac{N^{2\alpha}N!N!N\left(1+\mathcal{O}\left(\frac{1}{N}\right)\right)}{\Gamma(n+2\alpha+2+N)(N-n)!}\right\rvert
\end{align*}
With Lemma \ref{s11} follows
\begin{align*}
&\left\lvert Q_n\left(\frac{N}{2}(1+x)\right)\left[\frac{2}{N}\left<u,P_n\right>_\varrho\binom{n+\alpha}{n}-\left<u,Q_n\right>_\omega\frac{\Gamma^2(\alpha+1+n)}{n!n!}A_{n,N}^\alpha\right]\right\rvert\\
\overset{(\ref{s11a})}{\leq}&\binom{n+\alpha}{n}\left\lvert\frac{2}{N}\left<u,P_n\right>_\varrho-\frac{2}{N}\left<u,\tilde{Q}_n\right>_\varrho\left(1+\mathcal{O}\left(\frac{n^2}{N}\right)\right)\left(1+\mathcal{O}\left(\frac{1}{N}\right)\right)\right\rvert\\
\leq&\binom{n+\alpha}{n}\left\lvert\frac{2}{N}\left<u,P_n\right>_\varrho+\frac{2}{N}\left<u,P_n-\tilde{Q}_n-P_n\right>_\varrho\left(1+\mathcal{O}\left(\frac{n^2}{N}\right)\right)\right\rvert\\
\leq&\frac{2}{N}\binom{n+\alpha}{n}\left[\left\lvert\left<u,P_n\right>_\varrho\right\rvert\mathcal{O}\left(\frac{n^2}{N}\right)+\left\lvert\left<u,P_n-\tilde{Q}_n\right>_\varrho\right\rvert\left(1+\mathcal{O}\left(\frac{n^2}{N}\right)\right)\right]
\end{align*}
We obtain with Lemma \ref{s3}
\begin{align*}
&\left\lvert Q_n\left(\frac{N}{2}(1+x)\right)\left[\frac{2}{N}\left<u,P_n\right>_\varrho\binom{n+\alpha}{n}-\left<u,Q_n\right>_\omega\frac{\Gamma^2(\alpha+1+n)}{n!n!}A_{n,N}^\alpha\right]\right\rvert\\
\overset{(\ref{s3a})}{\overset{(\ref{s3b})}{\leq}}&\frac{2}{N}\binom{n+\alpha}{n}\left\lvert\left<u,P_n\right>_\varrho\right\rvert\mathcal{O}\left(\frac{n^2}{N}\right)\\
+&\frac{2}{N}\binom{n+\alpha}{n}\max_{x\in[-1,1]}{\lvert u(x)\varrho(x)\rvert}\mathcal{O}\left(\frac{n^{1+\max{\{1,\alpha\}}}}{N}\right)\left<1,1\right>_1\left(1+\mathcal{O}\left(\frac{n^2}{N}\right)\right)\\
\leq&\frac{2}{N}\binom{n+\alpha}{n}\left\lvert\left<u,P_n\right>_\varrho\right\rvert\mathcal{O}\left(\frac{n^2}{N}\right)+\mathcal{O}\left(\frac{n^{1+\alpha+\max{\{1,\alpha\}}}}{N}\right)\\
\leq&\frac{2}{N}\binom{n+\alpha}{n}\max_{x\in[-1,1]}{\lvert u(x)\rvert}\max_{x\in[-1,1]}{\lvert P_n(x)\varrho(x)\rvert}\left<1,1\right>_1\mathcal{O}\left(\frac{n^2}{N}\right)\\
+&\mathcal{O}\left(\frac{n^{1+\alpha+\max{\{1,\alpha\}}}}{N}\right)\\
\leq&\max_{x\in[-1,1]}{\lvert P_n(x)\varrho(x)\rvert}\mathcal{O}\left(\frac{n^{2+\alpha}}{N}\right)+\mathcal{O}\left(\frac{n^{1+\alpha+\max{\{1,\alpha\}}}}{N}\right)\\
\overset{(\ref{totvarjacobi4a})}{\overset{(\ref{totvarjacobi4b})}{\leq}}&\mathcal{O}\left(\frac{n^{1+\alpha+\max{\{1,\alpha\}}}}{N}\right).
\end{align*}
\end{proof}

\begin{lemma}\label{s13}
Let $\alpha>-\frac{1}{2}$ and let for $N\in\N$
\begin{align}
n(\alpha,N):=\frac{1}{2}-\alpha+\frac{1}{2}\sqrt{(2\alpha+1)(2\alpha+2N+1)}.
\end{align}
For each $u\in\mathcal{C}\left[-1,1\right]\cap\mathcal{BV}\left[-1,1\right]$ and $\frac{1}{2}\geq\alpha>0$ holds
\begin{align}\label{s13a}
\left\lvert\left(u,P_n\right)_\varrho P_n(x)-\frac{2}{N}\left<u,P_n\right>_\varrho\tilde{Q}_n(x)\right\rvert=\mathcal{O}\left(\frac{n^{2-\alpha}}{N}\right),
\end{align}
with $n\leq n(\alpha,N)$, for each $x\in[-1,1]$.\\
For each $u\in\mathcal{C}\left[-1,1\right]\cap\mathcal{BV}\left[-1,1\right]$ and $\alpha\geq\frac{1}{2}$ holds
\begin{align}\label{s13b}
\left\lvert\left(u,P_n\right)_\varrho P_n(x)-\frac{2}{N}\left<u,P_n\right>_\varrho\tilde{Q}_n(x)\right\rvert=\mathcal{O}\left(\frac{n^{\frac{1}{2}+\max{\{1,\alpha\}}}}{N}\right),
\end{align}
with $n\leq n(\alpha,N)$, for each $x\in[-1,1]$.
\end{lemma}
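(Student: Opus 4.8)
The plan is to imitate the three-term estimate used in the proof of Lemma \ref{s8}, now carrying the Jacobi weight $\varrho$ through every step. First I would split, by the triangle inequality,
\begin{align*}
&\left\lvert\left(u,P_n\right)_\varrho P_n(x)-\frac{2}{N}\left<u,P_n\right>_\varrho\tilde{Q}_n(x)\right\rvert\\
\leq&\left\lvert\left(u,P_n\right)_\varrho-\frac{2}{N}\left<u,P_n\right>_\varrho\right\rvert\left\lvert P_n(x)\right\rvert+\frac{2}{N}\left\lvert\left<u,P_n\right>_\varrho\right\rvert\left\lvert P_n(x)-\tilde{Q}_n(x)\right\rvert,
\end{align*}
so that it suffices to control a quadrature error for $uP_n\varrho$ and the Jacobi--Hahn comparison of Lemma \ref{s3}.

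For the quadrature error the key observation is that for $\alpha>0$ the weight $\varrho$ vanishes at $\pm1$, so $uP_n\varrho$ vanishes there and the two boundary terms in Lemma \ref{trapezformel} drop out; hence $\left\lvert\left(u,P_n\right)_\varrho-\frac{2}{N}\left<u,P_n\right>_\varrho\right\rvert\leq\frac{1}{N}\mathcal{V}\left[uP_n\varrho\right]$. Then Lemma \ref{bvprod} bounds $\mathcal{V}\left[uP_n\varrho\right]$ by $\max\lvert u\rvert\,\mathcal{V}\left[P_n\varrho\right]+\max\lvert P_n\varrho\rvert\,\mathcal{V}\left[u\right]$, and Corollaries \ref{totvarjacobi4} and \ref{totvarjacobi5} give $\mathcal{V}\left[uP_n\varrho\right]=\mathcal{O}(n^{1/2})$ for $\alpha\geq\frac{1}{2}$ and $\mathcal{O}(n^{1-\alpha})$ for $\frac{1}{2}\geq\alpha>0$. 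Multiplying by $\lvert P_n(x)\rvert\leq\binom{n+\alpha}{n}$ (Remark \ref{maxjacobi1}), which is $\mathcal{O}(n^\alpha)$ by Lemma \ref{gamma1}, produces a first summand of order $n^{\alpha+1/2}/N$ for $\alpha\geq\frac{1}{2}$ and $n/N$ for $\frac{1}{2}\geq\alpha>0$; in both regimes this is already dominated by the rate claimed in the Lemma.

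For the second summand I would resist bounding $\left<u,P_n\right>_\varrho$ through $\max\lvert P_n\rvert$, since that only recovers the coarser exponent of Lemma \ref{s12}; instead, writing out the discrete inner product and pulling out the \emph{weighted} maximum, $\frac{2}{N}\left\lvert\left<u,P_n\right>_\varrho\right\rvert\leq\frac{2(N+1)}{N}\max\lvert u\rvert\max\lvert P_n\varrho\rvert=\mathcal{O}(\max\lvert P_n\varrho\rvert)$, which by Corollary \ref{totvarjacobi4} is $\mathcal{O}(n^{-1/2})$ for $\alpha\geq\frac{1}{2}$ and $\mathcal{O}(n^{-\alpha})$ for $\frac{1}{2}\geq\alpha>0$. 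Reading (\ref{s3a})--(\ref{s3b}) as the single uniform estimate $\lvert P_n(x)-\tilde{Q}_n(x)\rvert=\mathcal{O}(n^{1+\max\{1,\alpha\}}/N)$ and multiplying, the second summand is of order $n^{1/2+\max\{1,\alpha\}}/N$ for $\alpha\geq\frac{1}{2}$ and $n^{2-\alpha}/N$ for $\frac{1}{2}\geq\alpha>0$ (using $\max\{1,\alpha\}=1$ in the latter range). Adding the two summands and noting that the second absorbs the first in each regime yields (\ref{s13a}) and (\ref{s13b}).

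All of this is routine bookkeeping; the one genuinely delicate point is not to squander the decay of the weighted polynomial, i.e.\ to estimate $\left<u,P_n\right>_\varrho$ via Corollary \ref{totvarjacobi4} rather than via the growing quantity $\max\lvert P_n\rvert=\binom{n+\alpha}{n}$, the latter choice inflating the bound by a factor $n^{\alpha+1/2}$ resp.\ $n^{2\alpha}$ and thereby only reproducing the weaker Lemma \ref{s12}. A secondary point is that the endpoint correction of the trapezoidal rule is harmless here precisely because $\varrho(\pm1)=0$ for $\alpha>0$, unlike the Legendre case of Lemma \ref{s8} where those terms had to be estimated separately.
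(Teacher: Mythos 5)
Your proposal is correct and follows essentially the same route as the paper: the same two-term split, the trapezoidal bound $\frac{1}{N}\mathcal{V}\left[uP_n\varrho\right]$ combined with Lemma \ref{bvprod} and Corollaries \ref{totvarjacobi4}--\ref{totvarjacobi5}, and crucially the same estimate $\frac{2}{N}\left\lvert\left<u,P_n\right>_\varrho\right\rvert=\mathcal{O}\bigl(\max_{x}\lvert P_n(x)\varrho(x)\rvert\bigr)$ paired with Lemma \ref{s3}. Your explicit remark that the endpoint terms of Lemma \ref{trapezformel} vanish because $\varrho(\pm1)=0$ for $\alpha>0$ is a point the paper uses only implicitly, so it is a welcome clarification rather than a deviation.
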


\begin{proof}
Let $\alpha>0$, let $u\in\mathcal{C}\left[-1,1\right]\cap\mathcal{BV}\left[-1,1\right]$ and let $x\in[-1,1]$. Then we have for any $n\leq n(\alpha,N)$
\begin{align*}
&\left\lvert\left(u,P_n\right)_\varrho P_n(x)-\frac{2}{N}\left<u,P_n\right>_\varrho\tilde{Q}_n(x)\right\rvert\\
\leq&\left\lvert\left(u,P_n\right)_\varrho P_n(x)-\frac{2}{N}\left<u,P_n\right>_\varrho P_n(x)\right\rvert\\
+&\left\lvert\frac{2}{N}\left<u,P_n\right>_\varrho P_n(x)-\frac{2}{N}\left<u,P_n\right>_\varrho\tilde{Q}_n(x)\right\rvert\\
\overset{(\ref{maxjacobi1a})}{\leq}&\binom{n+\alpha}{n}\left\lvert\left(u,P_n\right)_\varrho-\frac{2}{N}\left<u,P_n\right>_\varrho\right\rvert+\frac{2}{N}\left\lvert\left<u,P_n\right>_\varrho\right\rvert\left\lvert P_n(x)-\tilde{Q}_n(x)\right\rvert\\
\overset{(\ref{trapezformela})}{\leq}&\binom{n+\alpha}{n}\frac{1}{N}\mathcal{V}\left[u P_n \varrho\right]+\frac{2}{N}\left\lvert\left<u,P_n\right>_\varrho\right\rvert\left\lvert P_n(x)-\tilde{Q}_n(x)\right\rvert
\end{align*}
With Lemma \ref{s3} follows
\begin{align*}
&\left\lvert\left(u,P_n\right)_\varrho P_n(x)-\frac{2}{N}\left<u,P_n\right>_\varrho\tilde{Q}_n(x)\right\rvert\\
\overset{(\ref{s3a})}{\overset{(\ref{s3b})}{\leq}}&\binom{n+\alpha}{n}\frac{1}{N}\mathcal{V}\left[u P_n \varrho\right]\\
+&\frac{2(N+1)}{N}\max_{x\in[-1,1]}{\lvert u(x)\rvert}\max_{x\in[-1,1]}{\lvert P_n(x) \varrho(x)\rvert}\mathcal{O}\left(\frac{n^{1+\max{\{1,\alpha\}}}}{N}\right)\\
\overset{(\ref{bvproda})}{\leq}&\binom{n+\alpha}{n}\frac{1}{N}\left[\max_{x\in[-1,1]}{\lvert u(x)\rvert}\mathcal{V}\left[P_n \varrho\right]+\max_{x\in[-1,1]}{\lvert P_n(x) \varrho(x)\rvert}\mathcal{V}\left[u\right]\right]\\
+&\max_{x\in[-1,1]}{\lvert P_n(x) \varrho(x)\rvert}\mathcal{O}\left(\frac{n^{1+\max{\{1,\alpha\}}}}{N}\right)\\
\leq&\left[\mathcal{V}\left[P_n \varrho\right]+\max_{x\in[-1,1]}{\lvert P_n(x) \varrho(x)\rvert}\right]\mathcal{O}\left(\frac{n^{\alpha}}{N}\right)\\
+&\max_{x\in[-1,1]}{\lvert P_n(x) \varrho(x)\rvert}\mathcal{O}\left(\frac{n^{1+\max{\{1,\alpha\}}}}{N}\right)\\
\leq&\mathcal{V}\left[P_n \varrho\right]\mathcal{O}\left(\frac{n^{\alpha}}{N}\right)+\max_{x\in[-1,1]}{\lvert P_n(x) \varrho(x)\rvert}\mathcal{O}\left(\frac{n^{1+\max{\{1,\alpha\}}}}{N}\right).
\end{align*}
Then we have especially for $\frac{1}{2}\geq\alpha>0$
\begin{align*}
&\left\lvert\left(u,P_n\right)_\varrho P_n(x)-\frac{2}{N}\left<u,P_n\right>_\varrho\tilde{Q}_n(x)\right\rvert\\
\leq&\mathcal{V}\left[P_n \varrho\right]\mathcal{O}\left(\frac{n^{\alpha}}{N}\right)+\max_{x\in[-1,1]}{\lvert P_n(x) \varrho(x)\rvert}\mathcal{O}\left(\frac{n^{1+\max{\{1,\alpha\}}}}{N}\right)\\
\overset{(\ref{totvarjacobi4b})}{\overset{(\ref{totvarjacobi5b})}{\leq}}&\mathcal{O}\left(n^{1-\alpha}\right)\mathcal{O}\left(\frac{n^{\alpha}}{N}\right)+\mathcal{O}\left(n^{-\alpha}\right)\mathcal{O}\left(\frac{n^{1+\max{\{1,\alpha\}}}}{N}\right)\\
\leq&\mathcal{O}\left(\frac{n^{2-\alpha}}{N}\right).
\end{align*}
And we have especially for $\alpha\geq\frac{1}{2}$
\begin{align*}
&\left\lvert\left(u,P_n\right)_\varrho P_n(x)-\frac{2}{N}\left<u,P_n\right>_\varrho\tilde{Q}_n(x)\right\rvert\\
\leq&\mathcal{V}\left[P_n \varrho\right]\mathcal{O}\left(\frac{n^{\alpha}}{N}\right)+\max_{x\in[-1,1]}{\lvert P_n(x) \varrho(x)\rvert}\mathcal{O}\left(\frac{n^{1+\max{\{1,\alpha\}}}}{N}\right)\\
\overset{(\ref{totvarjacobi4a})}{\overset{(\ref{totvarjacobi5a})}{\leq}}&\mathcal{O}\left(n^{\frac{1}{2}}\right)\mathcal{O}\left(\frac{n^{\alpha}}{N}\right)+\mathcal{O}\left(n^{-\frac{1}{2}}\right)\mathcal{O}\left(\frac{n^{1+\max{\{1,\alpha\}}}}{N}\right)\\
\leq&\mathcal{O}\left(\frac{n^{\frac{1}{2}+\max{\{1,\alpha\}}}}{N}\right).
\end{align*}
\end{proof}

\begin{lemma}\label{s14}
Let $\alpha>0$ and let for $N\in\N$
\begin{align}
n(\alpha,N):=\frac{1}{2}-\alpha+\frac{1}{2}\sqrt{(2\alpha+1)(2\alpha+2N+1)}.
\end{align}
For each $u\in\mathcal{C}\left[-1,1\right]\cap\mathcal{BV}\left[-1,1\right]$ holds
\begin{align}\label{s14a}
\left\lvert\frac{\left(u,P_n\right)_\varrho}{\left(P_n,P_n\right)_\varrho}P_n(x)-\frac{\left<u,Q_n\right>_\omega}{\left<Q_n,Q_n\right>_\omega}Q_n\left(\frac{N}{2}(1+x)\right)\right\rvert=\mathcal{O}\left(\frac{n^{2+\alpha+\max{\{1,\alpha\}}}}{N}\right),
\end{align}
with $n\leq n(\alpha,N)$, for each $x\in[-1,1]$.
\end{lemma}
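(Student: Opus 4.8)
The plan is to follow the proof of Lemma~\ref{s9} (the case $\alpha=0$) almost verbatim, with Lemmatas~\ref{s7} and~\ref{s8} replaced by their $\alpha>0$ counterparts Lemmatas~\ref{s12} and~\ref{s13}. Fix $u\in\mathcal{C}[-1,1]\cap\mathcal{BV}[-1,1]$, $x\in[-1,1]$ and $n\leq n(\alpha,N)$ (for $N$ large this forces $n\leq N/2$, so Lemmatas~\ref{s6} and~\ref{s11} are available). First I would use Lemma~\ref{s4} with $\beta=\alpha$ to compute the quotient of the two norms,
\[
\frac{(P_n,P_n)_\varrho}{\left<Q_n,Q_n\right>_\omega}=(-1)^n\frac{\Gamma^2(\alpha+1+n)}{n!\,n!}A_{n,N}^\alpha ,
\]
so that the quantity in~(\ref{s14a}) equals $(P_n,P_n)_\varrho^{-1}\,B_n(x)$ with
\[
B_n(x):=(u,P_n)_\varrho P_n(x)-(-1)^n\frac{\Gamma^2(\alpha+1+n)}{n!\,n!}A_{n,N}^\alpha\left<u,Q_n\right>_\omega\,Q_n\!\left(\frac N2(1+x)\right).
\]

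Next I would insert $\pm\frac2N\left<u,P_n\right>_\varrho\tilde Q_n(x)$ into $B_n(x)$ and apply the triangle inequality; since $\tilde Q_n(x)=(-1)^n\binom{n+\alpha}{n}Q_n(\frac N2(1+x))$, the first resulting summand is exactly the left-hand side of Lemma~\ref{s13} and the second is exactly the left-hand side of Lemma~\ref{s12} (the two factors $(-1)^n$ cancelling). Hence $|B_n(x)|$ is $\mathcal{O}(n^{2-\alpha}/N)+\mathcal{O}(n^{1+\alpha+\max\{1,\alpha\}}/N)$ for $0<\alpha\leq\frac12$ and $\mathcal{O}(n^{\frac12+\max\{1,\alpha\}}/N)+\mathcal{O}(n^{1+\alpha+\max\{1,\alpha\}}/N)$ for $\alpha\geq\frac12$. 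Finally, from the normalisation~(\ref{orthogjacobi1a}) with $\beta=\alpha$ together with Lemma~\ref{gamma1} (applied to $\Gamma(n+\alpha+1)/n!$ and to $\Gamma(n+\alpha+1)/\Gamma(n+2\alpha+1)$) one gets $(P_n,P_n)_\varrho=\Theta(1/n)$, hence $(P_n,P_n)_\varrho^{-1}=\mathcal{O}(n)$; multiplying the previous bounds by $\mathcal{O}(n)$ and using $\alpha>0$ (which makes the Lemma~\ref{s13} contribution $\mathcal{O}(n^{3-\alpha}/N)$, resp. $\mathcal{O}(n^{3/2+\max\{1,\alpha\}}/N)$, dominated by $\mathcal{O}(n^{2+\alpha+\max\{1,\alpha\}}/N)$) yields~(\ref{s14a}), the dominant term coming from Lemma~\ref{s12}.

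The only point requiring care is the identification in the first step: one must verify that the prefactor furnished by Lemma~\ref{s4} is exactly $(-1)^n\frac{\Gamma^2(\alpha+1+n)}{n!\,n!}A_{n,N}^\alpha$, i.e.\ precisely the combination appearing inside Lemma~\ref{s12}, with the sign $(-1)^n$ absorbed into $\tilde Q_n$. Once this bookkeeping is settled, the argument is a mechanical assembly of Lemmatas~\ref{gamma1}, \ref{s12} and~\ref{s13}, entirely parallel to the $\alpha=0$ computation carried out in Lemma~\ref{s9}.
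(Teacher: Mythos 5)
Your proposal is correct and follows the paper's own proof essentially verbatim: rewrite the difference via Lemma \ref{s4} as $\left(P_n,P_n\right)_\varrho^{-1}$ times $B_n(x)$, insert $\pm\frac{2}{N}\left<u,P_n\right>_\varrho\tilde{Q}_n(x)$, bound the two pieces by Lemmatas \ref{s13} and \ref{s12}, and absorb $\left(P_n,P_n\right)_\varrho^{-1}=\mathcal{O}(n)$ using (\ref{orthogjacobi1a}) and Lemma \ref{gamma1}. The bookkeeping of the prefactor $(-1)^n\frac{\Gamma^2(\alpha+1+n)}{n!\,n!}A_{n,N}^\alpha$ and the identification with the left-hand sides of Lemmatas \ref{s12} and \ref{s13} are exactly as in the paper, so nothing further is needed.
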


\begin{proof}
Let $u\in\mathcal{C}\left[-1,1\right]\cap\mathcal{BV}\left[-1,1\right]$ and let $x\in[-1,1]$. Then we have for any $n\leq n(\alpha,N)$
\begin{align*}
&\left\lvert\frac{\left(u,P_n\right)_\varrho}{\left(P_n,P_n\right)_\varrho}P_n(x)-\frac{\left<u,Q_n\right>_\omega}{\left<Q_n,Q_n\right>_\omega}Q_n\left(\frac{N}{2}(1+x)\right)\right\rvert\\
\overset{(\ref{s4a})}{=}&\frac{1}{\left\lvert\left(P_n,P_n\right)_\varrho\right\rvert}\left\lvert\left(u,P_n\right)_\varrho P_n(x)-\left<u,Q_n\right>_\omega\tilde{Q}_n(x)\frac{\Gamma(\alpha+1+n)\Gamma(\alpha+1)}{n!}A_{n,N}^\alpha\right\rvert\\
\leq&\frac{1}{\left\lvert\left(P_n,P_n\right)_\varrho\right\rvert}\left\lvert\left(u,P_n\right)_\varrho P_n(x)-\frac{2}{N}\left<u,P_n\right>_\varrho\tilde{Q}_n(x)\right\rvert\\
+&\frac{1}{\left\lvert\left(P_n,P_n\right)_\varrho\right\rvert}\left\lvert\tilde{Q}_n(x)\left[\frac{2}{N}\left<u,P_n\right>_\varrho-\left<u,Q_n\right>_\omega\frac{\Gamma(\alpha+1+n)\Gamma(\alpha+1)}{n!}A_{n,N}^\alpha\right]\right\rvert\\
\overset{(\ref{s12a})}{\leq}&\frac{1}{\left\lvert\left(P_n,P_n\right)_\varrho\right\rvert}\left\lvert\left(u,P_n\right)_\varrho P_n(x)-\frac{2}{N}\left<u,P_n\right>_\varrho\tilde{Q}_n(x)\right\rvert\\
+&\frac{1}{\left\lvert\left(P_n,P_n\right)_\varrho\right\rvert}\mathcal{O}\left(\frac{n^{1+\alpha+\max{\{1,\alpha\}}}}{N}\right)\\
\overset{(\ref{s13a})}{\overset{(\ref{s13b})}{\leq}}&\frac{1}{\left\lvert\left(P_n,P_n\right)_\varrho\right\rvert}\mathcal{O}\left(\frac{n^{1+\max{\{1,\alpha\}}}}{N}\right)+\frac{1}{\left\lvert\left(P_n,P_n\right)_\varrho\right\rvert}\mathcal{O}\left(\frac{n^{1+\alpha+\max{\{1,\alpha\}}}}{N}\right)\\
\overset{(\ref{orthogjacobi1a})}{\leq}&\frac{(2n+2\alpha+1)n!\Gamma(n+2\alpha+1)}{2^{2\alpha+1}\Gamma(n+\alpha+1)\Gamma(n+\alpha+1)}\mathcal{O}\left(\frac{n^{1+\alpha+\max{\{1,\alpha\}}}}{N}\right)\\
\overset{(\ref{gamma1a})}{\leq}&\mathcal{O}\left(\frac{n^{2+\alpha+\max{\{1,\alpha\}}}}{N}\right).
\end{align*}
\end{proof}

\begin{theorem}\label{s15}
Let $\alpha>0$ and let for $N\in\N$
\begin{align}
n(\alpha,N):=\frac{1}{2}-\alpha+\frac{1}{2}\sqrt{(2\alpha+1)(2\alpha+2N+1)}.
\end{align}
For each $u\in\mathcal{C}\left[-1,1\right]\cap\mathcal{BV}\left[-1,1\right]$ holds
\begin{align}
\begin{aligned}
&\left\lvert u(x)-\sum_{k=0}^n{\frac{\left<u,Q_k\right>_\omega}{\left<Q_k,Q_k\right>_\omega}Q_k\left(\frac{N}{2}(1+x)\right)}\right\rvert\\
\leq&\left\lvert u(x)-\sum_{k=0}^n{\frac{\left(u,P_k\right)_\varrho}{\left(P_k,P_k\right)_\varrho}P_k(x)}\right\rvert+\mathcal{O}\left(\frac{n^{3+\alpha+\max{\{1,\alpha\}}}}{N}\right),
\end{aligned}
\end{align}
with $n\leq n(\alpha,N)$, for each $x\in[-1,1]$.
\end{theorem}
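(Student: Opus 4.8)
The plan is to mirror the proof of Theorem \ref{s10} (the $\alpha=0$ case), with the single change that the per-term estimate of Lemma \ref{s9}, which is tied to $\alpha=0$, is replaced by its weighted counterpart Lemma \ref{s14}. So fix $u\in\mathcal{C}\left[-1,1\right]\cap\mathcal{BV}\left[-1,1\right]$, a point $x\in[-1,1]$, and $n\leq n(\alpha,N)$.

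First I would insert the truncated Jacobi expansion $\sum_{k=0}^n\frac{\left(u,P_k\right)_\varrho}{\left(P_k,P_k\right)_\varrho}P_k(x)$ as an intermediate object, apply the triangle inequality, and then move the second resulting modulus inside the finite sum by the triangle inequality again:
\begin{align*}
&\left\lvert u(x)-\sum_{k=0}^n{\frac{\left<u,Q_k\right>_\omega}{\left<Q_k,Q_k\right>_\omega}Q_k\left(\frac{N}{2}(1+x)\right)}\right\rvert\\
\leq&\left\lvert u(x)-\sum_{k=0}^n{\frac{\left(u,P_k\right)_\varrho}{\left(P_k,P_k\right)_\varrho}P_k(x)}\right\rvert\\
+&\sum_{k=0}^n{\left\lvert\frac{\left(u,P_k\right)_\varrho}{\left(P_k,P_k\right)_\varrho}P_k(x)-\frac{\left<u,Q_k\right>_\omega}{\left<Q_k,Q_k\right>_\omega}Q_k\left(\frac{N}{2}(1+x)\right)\right\rvert}.
\end{align*}
The first term on the right-hand side is exactly the quantity that occurs in the statement, so it only remains to control the finite sum. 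For this I would apply Lemma \ref{s14} with the running index $k$ in place of $n$: since $k\leq n\leq n(\alpha,N)$ its hypotheses are met, and the $k$-th summand is $\mathcal{O}\left(\frac{k^{2+\alpha+\max{\{1,\alpha\}}}}{N}\right)$, with an implied constant depending only on $\alpha$ (and on $\max_{[-1,1]}|u|$ and $\mathcal{V}[u]$), hence uniform in $k$ and in $x$. Using $\sum_{k=0}^n k^{2+\alpha+\max{\{1,\alpha\}}}=\mathcal{O}\left(n^{3+\alpha+\max{\{1,\alpha\}}}\right)$ then produces the remainder $\mathcal{O}\left(\frac{n^{3+\alpha+\max{\{1,\alpha\}}}}{N}\right)$, and the theorem follows.

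At this level there is essentially no obstacle left, because every analytic ingredient has already been absorbed into Lemma \ref{s14}: the extremal bound $\max_{x\in[0,N]}\lvert Q_n(x;\alpha,\alpha,N)\rvert=1$ for $n\leq n(\alpha,N)$ (Lemma \ref{s1}), the recurrence comparison giving $\tilde{Q}_n=P_n^{\alpha,\alpha}+\mathcal{O}(n^{1+\max{\{1,\alpha\}}}/N)$ (Lemma \ref{s3}, through Remark \ref{rekurhahn2}), the weight-function asymptotics (Lemmas \ref{s5} and \ref{s11}), the norm identity linking $\left<Q_n,Q_n\right>_\omega$ with $\left(P_n^{\alpha,\alpha},P_n^{\alpha,\alpha}\right)_\varrho$ (Lemma \ref{s4}), the sup-norm and total-variation bounds for $P_n^{\alpha,\alpha}$ and $P_n^{\alpha,\alpha}\varrho$ (Lemmas \ref{totvarjacobi1}--\ref{totvarjacobi3} and Corollaries \ref{totvarjacobi4}, \ref{totvarjacobi5}), and the trapezoidal quadrature error for $\mathcal{BV}$ integrands (Lemma \ref{trapezformel}). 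The one point that genuinely needs checking is that the $\mathcal{O}$-constant in Lemma \ref{s14} does not depend on $n$, so that substituting $k$ for $n$ and summing over $k=0,\dots,n$ is legitimate; this can be verified by tracing the constants through Lemmas \ref{s12}, \ref{s13} and \ref{s14}. One may also note that $n(\alpha,N)=\Theta(\sqrt{N})\leq N/2$ for all sufficiently large $N$, so the hypothesis $n\leq N/2$ used inside Lemmas \ref{s6} and \ref{s11} holds automatically once $n\leq n(\alpha,N)$.
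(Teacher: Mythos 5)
Your proposal is correct and follows essentially the same route as the paper: insert the truncated Jacobi expansion, apply the triangle inequality twice, bound each summand by Lemma \ref{s14} with $k\leq n\leq n(\alpha,N)$, and sum to obtain the $\mathcal{O}\left(\frac{n^{3+\alpha+\max\{1,\alpha\}}}{N}\right)$ remainder. Your extra remark on the uniformity of the implied constant is a sensible point of care but does not change the argument.
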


\begin{proof}
Let $u\in\mathcal{C}\left[-1,1\right]\cap\mathcal{BV}\left[-1,1\right]$ and let $x\in[-1,1]$. Then we have for any $n\leq n(\alpha,N)$
\begin{align*}
&\left\lvert u(x)-\sum_{k=0}^n{\frac{\left<u,Q_k\right>_\omega}{\left<Q_k,Q_k\right>_\omega}Q_k\left(\frac{N}{2}(1+x)\right)}\right\rvert\\
\leq&\left\lvert u(x)-\sum_{k=0}^n{\frac{\left(u,P_k\right)_\varrho}{\left(P_k,P_k\right)_\varrho}P_k(x)}\right\rvert\\
+&\left\lvert\sum_{k=0}^n{\frac{\left(u,P_k\right)_\varrho}{\left(P_k,P_k\right)_\varrho}P_k(x)}-\sum_{k=0}^n{\frac{\left<u,Q_k\right>_\omega}{\left<Q_k,Q_k\right>_\omega}Q_k\left(\frac{N}{2}(1+x)\right)}\right\rvert\\
\leq&\left\lvert u(x)-\sum_{k=0}^n{\frac{\left(u,P_k\right)_\varrho}{\left(P_k,P_k\right)_\varrho}P_k(x)}\right\rvert\\
+&\sum_{k=0}^n{\left\lvert\frac{\left(u,P_k\right)_\varrho}{\left(P_k,P_k\right)_\varrho}P_k(x)-\frac{\left<u,Q_k\right>_\omega}{\left<Q_k,Q_k\right>_\omega}Q_k\left(\frac{N}{2}(1+x)\right)\right\rvert}\\
\overset{(\ref{s14a})}{\leq}&\left\lvert u(x)-\sum_{k=0}^n{\frac{\left(u,P_k\right)_\varrho}{\left(P_k,P_k\right)_\varrho}P_k(x)}\right\rvert+\sum_{k=0}^n{\mathcal{O}\left(\frac{k^{2+\alpha+\max{\{1,\alpha\}}}}{N}\right)}\\
\leq&\left\lvert u(x)-\sum_{k=0}^n{\frac{\left(u,P_k\right)_\varrho}{\left(P_k,P_k\right)_\varrho}P_k(x)}\right\rvert+\mathcal{O}\left(\frac{n^{3+\alpha+\max{\{1,\alpha\}}}}{N}\right).
\end{align*}
\end{proof}

\newpage
\addcontentsline{toc}{section}{Literatur}

\bibliography{literatur}

\end{document}